\setlist[enumerate]{label={\upshape(\arabic*)}}
\newcolumntype{Y}{>{\RaggedRight\arraybackslash}X} 
\DeclareSymbolFont{cyrletters}{OT2}{wncyr}{m}{n}
\DeclareMathSymbol{\Sha}{\mathalpha}{cyrletters}{"58}
\newcommand{\Aff}{\mathbb{A}}
\newcommand{\F}{\mathbb{F}}
\newcommand{\PP}{\mathbb{P}}
\newcommand{\Klaur}[1]{{K(\!(#1)\!)} }
\newcommand{\Kpower}[1]{{K[\![#1]\!]} }
\newcommand{\calL}{\mathcal{L}}
\newcommand{\calO}{\mathcal{O}}
\newcommand*{\TakeFourierOrnament}[1]{{%
		\fontencoding{U}\fontfamily{futs}\selectfont\char#1}}
\newcommand*{\danger}{\TakeFourierOrnament{66}} 
\DeclareMathOperator{\Aut}{Aut}
\DeclareMathOperator{\Char}{char}
\DeclareMathOperator{\Gal}{Gal}
\DeclareMathOperator{\im}{im}
\DeclareMathOperator{\Sym}{Sym}
\newcommand{\et}{{\text{\'et}}}
\newcommand{\AGL}{\operatorname{AGL}}
\newcommand{\AGaL}{\operatorname{A \Gamma L}}
\newcommand{\GL}{\operatorname{GL}}
\newcommand{\PGL}{\operatorname{PGL}}
\newcommand{\PSL}{\operatorname{PSL}}
\newcommand{\PGaL}{\operatorname{P \Gamma L}}
\newcommand{\slantsf}[1]{\textsl{\textsf{#1}}}
\newtheorem{theorem}{Theorem}[section]
\newtheorem{lemma}[theorem]{Lemma}
\newtheorem{proposition}[theorem]{Proposition}
\theoremstyle{definition}
\newtheorem{definition}[theorem]{Definition}
\newtheorem{example}[theorem]{Example}
\theoremstyle{remark}
\newtheorem{remark}[theorem]{Remark}
\begin{document}

\title[sectional monodromy groups]{Sectional monodromy groups of projective curves}
\subjclass[2020]{14H50, 14E20, 12F10}

\author{Borys Kadets}
\thanks{This research was supported in part by Simons Foundation grant \#402472 (to Bjorn Poonen)}
\address{Department of Mathematics, Massachusetts Institute of Technology, Cambridge, MA 02139-4307, USA}
\email{bkadets@mit.edu}
\urladdr{\url{http://math.mit.edu/~bkadets/}}

\begin{abstract}
Fix a degree $d$ projective curve $X \subset \PP^r$ over an algebraically closed field $K$. Let $U \subset (\PP^r)^*$ be a dense open subvariety  such that every hyperplane $H \in U$ intersects $X$ in $d$ smooth points. Varying $H \in U$ produces the monodromy action $\varphi: \pi_1^{\et}(U) \to S_d$. Let $G_X\colonequals \im(\varphi)$. The permutation group $G_X$ is called the sectional monodromy group of $X$. In characteristic zero $G_X$ is always the full symmetric group, but sectional monodromy groups in characteristic $p$ can be smaller. For a large class of space curves ($r \geqslant 3$) we classify all possibilities for the sectional monodromy group $G$ as well as the curves with $G_X=G$. We apply similar methods to study a particular family of rational curves in $\PP^2$, which enables us to answer an old question about Galois groups of generic trinomials.
\end{abstract}

\maketitle

\section{Introduction}

Let $K$ be an algebraically closed field. Consider an integral nondegenerate proper curve $X$ in the projective space $\PP^r$ over $K$.
Choose coordinates $x_0, ..., x_r$ on $\PP^r$. Extend scalars from $K$ to $L=K(t_1, ...,t_r)$ and consider the subscheme
$Z \subset \PP^r_{L}$ given by the intersection of $X_{L}$ with the hyperplane $x_0 + t_1x_1+ ... + t_rx_r=0$. The scheme $Z$ is the spectrum of a finite separable extension $M$ of $L$ (see Lemma~\ref{irreducible}). The Galois group $G$ of the Galois closure of $M$ over $L$ is our main object of interest and will be referred to as the \slantsf{sectional monodromy group}. One can think of $G$ geometrically as the monodromy of the hyperplane section $H \cap X$ as the hyperplane $H$ varies.  A folklore result is the following.

\begin{proposition}[see Remark~\ref{introzero}]\label{boring}
Assume that the characteristic of $K$ is zero. Let $d$ be the degree of $X$. Then $G \cong S_d$.
\end{proposition}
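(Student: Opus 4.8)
The plan is to run the classical uniform position argument. Write $d=\deg X$, and note that $G\subseteq S_d$ is the monodromy group of the degree-$d$ finite \'etale cover $I_U\to U$, where $U\subseteq(\PP^r)^*$ is the dense open locus of hyperplanes meeting $X$ in $d$ distinct smooth points of $X$ and $I_U=\{(H,p):p\in X\cap H\}$. If $r=1$ then $X=\PP^1$, $d=1$, and there is nothing to prove, so assume $r\geqslant 2$, hence $d\geqslant r\geqslant 2$. It suffices to prove two properties of $G$: (a) $G$ is $2$-transitive on $\{1,\dots,d\}$, and (b) $G$ contains a transposition. Indeed, granting these: if $(ij)\in G$, then for any pair $k\neq l$ there is $g\in G$ with $g(i)=k$ and $g(j)=l$, so $(kl)=g(ij)g^{-1}\in G$; thus $G$ contains every transposition and $G=S_d$. (Transitivity of $G$ is already part of Lemma~\ref{irreducible}; property (a) reproves and strengthens it.)

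For (a), introduce the incidence variety
\[
I_2=\{(H,p,q)\in(\PP^r)^*\times X\times X:\ p,q\in X\cap H,\ p\neq q\}.
\]
Projection to $(X\times X)\setminus\Delta$ is surjective with every fibre the linear subsystem $\cong\PP^{r-2}$ of hyperplanes containing the line $\overline{pq}$. Since $(X\times X)\setminus\Delta$ is irreducible and the fibres are irreducible of constant dimension, $I_2$ is irreducible. Its preimage over $U$ is a nonempty open subscheme, hence irreducible; it is the cover $(I_U\times_U I_U)\setminus\Delta_{I_U}\to U$ whose fibre over $H$ consists of the ordered pairs of distinct points of $X\cap H$. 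Connectedness of this cover is equivalent to transitivity of $G$ on ordered pairs of distinct indices, that is, to $2$-transitivity. (The same argument with $X^k$ minus its diagonals yields $k$-transitivity whenever $r\geqslant k$, but only $k=2$ is needed here.)

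For (b) we pass to a general tangent hyperplane; this is the step in which $\Char K=0$ is essential. Let $X_\smooth$ be the cofinite smooth locus of $X$, write $T_pX\subset\PP^r$ for the embedded tangent line at $p\in X_\smooth$, and let $X^*\subseteq(\PP^r)^*$ be the dual variety, the closure of $\{H:\ T_pX\subseteq H\text{ for some }p\in X_\smooth\}$. The incidence $J=\{(p,H):\ p\in X_\smooth,\ T_pX\subseteq H\}$ is a $\PP^{r-2}$-bundle over $X_\smooth$, so it is irreducible of dimension $r-1$; since $X$ is nondegenerate its tangent lines span $\PP^r$ and hence no hyperplane contains all of them, so $J\to X^*$ is generically finite and $X^*$ is an irreducible hypersurface. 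In characteristic zero two further inputs hold: $X$ is reflexive (the biduality theorem), so $J\to X^*$ has degree one and a general tangent hyperplane is tangent at exactly one point; and, by generic smoothness, a general point of $X_\smooth$ is not a flex, so its tangent line meets $X$ with multiplicity exactly $2$. Hence a general $H_0\in X^*$ meets $X$ in an ordinary double point at a single $p\in X_\smooth$ together with $d-2$ further distinct smooth points, and $H_0$ lies on no other codimension-one component of the branch locus of $I_U\to U$ (in particular on no hyperplane through a point of $X\setminus X_\smooth$, since in characteristic zero $X$ is not a strange curve). The inertia of the cover $I_U\to U$ at the generic point of $X^*$ therefore fixes the $d-2$ transverse points of $X\cap H_0$ and transposes the two points colliding at $p$ — concretely, near $p$ the equation is $t^2=s$ in a transverse parameter $s$ — so this inertia, which lies in $G$, is generated by a transposition. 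This proves (b), and with (a) we conclude $G=S_d$.

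The only delicate point is (b): the argument rests entirely on the generic tangent hyperplane being tangent at a \emph{single} smooth point with \emph{ordinary} contact, and both assertions genuinely require characteristic zero. In characteristic $p$ the contact order of a general tangent line can exceed $2$ — for instance, for the curve $y=x^p$ every tangent line is $y=a^p$ and meets the curve in one point with multiplicity $p$, so the local monodromy is a $p$-cycle rather than a transposition — and the Gauss map can be inseparable, so the tangency analysis collapses. That is precisely the source of the smaller sectional monodromy groups studied in the rest of the paper; in characteristic zero, generic smoothness and biduality exclude these pathologies and the proof goes through.
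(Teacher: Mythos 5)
Your proof is correct, and while its skeleton (2-transitivity plus a transposition, then conjugating the transposition around) is exactly the paper's — Remark~\ref{introzero} deduces the proposition by combining Lemma~\ref{two-trans} with Lemma~\ref{inertia} — you establish both ingredients by genuinely different means. For 2-transitivity the paper degenerates to a pencil: it restricts $\pi_X$ to the line in $(\PP^r)^*$ of hyperplanes through a codimension-2 subspace meeting $X$ in a single point, so that the restricted cover splits off a section and exhibits a point stabilizer acting transitively; you instead prove irreducibility of the ordered-pair incidence variety $I_2$ fibred over $(X\times X)\setminus\Delta$, the classical uniform-position argument. (Your parenthetical claim that the same argument gives $k$-transitivity for $r\geqslant k$ is not automatic for $k\geqslant 3$, since over collinear tuples the fibre dimension jumps and irreducibility needs more care — the paper only obtains higher transitivity for nonstrange curves via projections in Proposition~\ref{r-transitive}; but you only use $k=2$, where the fibres are uniformly $\PP^{r-2}$, so this is harmless.) For the transposition, the paper reduces to a plane curve by general projection (Proposition~\ref{G-under-proj}) and computes inertia along a pencil of lines through a general point by a Newton-polygon argument (Lemmas~\ref{AGL1} and~\ref{inertia}); that machinery is built to survive characteristic $p$ (where $q>1$ forces $\AGL_1(q)$-type inertia) and even handles a general tangent line tangent at $s>1$ points, producing a transposition as a product $gg_1$ of two inertia elements. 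You stay in $\PP^r$ and use inputs available only in characteristic zero — reflexivity/biduality so the general tangent hyperplane is tangent at a unique point, generic smoothness so there are no generic flexes, and non-strangeness to keep the generic point of $X^*$ off the duals of singular points — so that the inertia along $X^*$ is already generated by a single transposition. The trade-off: your route is shorter and self-contained for the characteristic-zero statement and avoids the projection step, while the paper's local-inertia apparatus is characteristic-free and is precisely what gets reused for the main theorems in characteristic $p$.
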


The result is important in the study of curves in projective space. For instance, it can be used to restrict the possibilities for Hilbert polynomials of space curves; for more details see \cite{Harris1980}, \cite{Eisenbud-Harris1982}.

The $\Char K = 0$ assumption of Proposition~\ref{boring} cannot be removed. For example, when $\Char K = p$, the sectional monodromy group of the rational plane curve $x^p=yz^{p-1}$ is $\AGL_1(p)$. 

Motivated by geometric applications, Rathmann~\cite{Rathmann1987} proved a version of Proposition~\ref{boring} valid in all characteristics.

\begin{theorem}[Rathmann]\label{Rathmann}
	Assume that $X \subset \PP^r$ is a smooth nondegenerate curve of degree $d$. Assume $r \geqslant 4$. Then the sectional monodromy group of $X$ contains the alternating group $A_d$.
\end{theorem}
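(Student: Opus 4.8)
The plan is to use the classification of transitive permutation groups containing a large element together with geometric input coming from projections of the curve $X$. The starting point is that $G$ acts transitively on the $d$ points $X \cap H$ for generic $H$, which follows from Lemma~\ref{irreducible} (irreducibility of the generic hyperplane section once $X$ is nondegenerate and $r\geqslant 2$). Moreover, since $r \geqslant 4$, a generic codimension-$2$ linear section $X \cap \Lambda$ is again irreducible, so the stabilizer of a point of $X \cap H$ acts transitively on the remaining $d-1$ points; that is, $G$ is $2$-transitive. The goal is then to upgrade $2$-transitivity to containing $A_d$, and the mechanism for that is a unispection argument: one finds a hyperplane $H$ (a degeneration within $U$, analyzed via the semicontinuity/specialization of the monodromy) such that $H$ meets $X$ in $d-1$ points with one point of multiplicity $2$ and the rest simple — equivalently, $H$ is a generic tangent hyperplane — which contributes a transposition to the inertia, hence to $G$. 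A $2$-transitive group containing a transposition is $S_d$, so in particular it contains $A_d$.

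The first real step is to establish $2$-transitivity rigorously. For this I would consider the incidence variety $W = \{(x, H) : x \in X,\ H \ni x,\ H \text{ meets } X \text{ transversally}\}$ and the second projection $W \to U$; transitivity of $G$ is irreducibility of $W$, which holds because $X$ is irreducible and the fiber $X_H$ is irreducible for generic $H$ (Lemma~\ref{irreducible}). For $2$-transitivity I would fix a point $x$ and study hyperplanes through $x$: the residual scheme $(X \cap H) \setminus \{x\}$ of length $d-1$ is cut on the projection $\pi_x\colon X \dashrightarrow \PP^{r-1}$ by a generic hyperplane, and since $r - 1 \geqslant 3 \geqslant 2$ and the image $\pi_x(X)$ is still nondegenerate in $\PP^{r-1}$, that residual scheme is irreducible over the function field; this gives transitivity of the point stabilizer on the complement, i.e.\ $2$-transitivity of $G$.

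The second step, and the one I expect to be the main obstacle, is producing a transposition in $G$. The naive hope — that a generic tangent hyperplane is simply tangent (one node of the section, i.e.\ intersection multiplicity exactly $2$ at one point) — can fail in characteristic $p$: the dual variety $X^*$ can be very degenerate, the Gauss map can be inseparable, and every tangent hyperplane might be \emph{hyperosculating} or meet $X$ with multiplicity divisible by $p$, which would instead contribute a $p$-cycle or worse rather than a transposition. This is precisely the phenomenon behind the example $x^p = yz^{p-1}$ with group $\AGL_1(p)$. The hypothesis $r \geqslant 4$ must be used to rule this out: one shows that for a smooth nondegenerate curve in $\PP^r$ with $r \geqslant 4$, a generic tangent hyperplane (generic point of an appropriate component of the variety of tangent hyperplanes) is tamely simply tangent — the contact order is exactly $2$ at a single point. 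I would argue this by a dimension count on the variety of hyperplanes having contact of order $\geqslant 3$ somewhere, or order $\geqslant 2$ at two points, combined with the fact that in $\PP^r$ with $r$ large there is ``enough room'' for the generic tangent hyperplane to avoid these loci; the embedding being smooth and nondegenerate bounds the bad locus. Reducing to a surface section (a generic $\PP^4 \hookrightarrow \PP^r$ if $r > 4$, or staying in $\PP^4$) lets one assume $r = 4$ throughout.

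Having obtained a simply tangent hyperplane $H_0 \in \overline{U}$, the third step is to transport this to the monodromy group. Near $H_0$, the $d$ sheets of $X \cap H$ over $U$ degenerate so that exactly two collide simply; choosing an étale-local branch of $U$ around $H_0$ (a punctured curve through $H_0$ inside $U$), the local monodromy around $H_0$ is a transposition, and since $\pi_1^{\et}$ of a connected curve surjects onto the decomposition group at a point, that transposition lies in $G_X = \im(\varphi)$. Combined with $2$-transitivity, a classical theorem (a $2$-transitive permutation group containing a transposition is the full symmetric group) gives $G_X = S_d \supseteq A_d$, completing the proof. The one subtlety to check carefully in this last step is that the specialization map on fundamental groups behaves well — i.e.\ that $H_0$ can be approached from within $U$ by a curve meeting the branch locus only at $H_0$ and with the expected local structure — which is a standard application of Bertini-type irreducibility for the branch divisor together with the local analysis of the tangency.
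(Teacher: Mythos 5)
The central step of your argument --- that for a smooth nondegenerate curve in $\PP^r$ with $r \geqslant 4$ the generic tangent hyperplane is simply tangent (contact order exactly $2$ at a single point), so that the local monodromy is a transposition --- is exactly the point that fails in characteristic $p$, and neither smoothness nor $r \geqslant 4$ rescues it. The obstruction is the Gauss map: by Kaji's theorem (quoted in Section~\ref{tangents}), the Gauss map of a smooth space curve can have arbitrary inseparable degree $q$ and also separable degree $s>1$, in which case \emph{every} tangent line (hence every tangent hyperplane) meets $X$ with multiplicity $\max(q,2)$ at each of $s$ points of tangency. The locus of ``bad'' tangent hyperplanes is then the entire variety of tangent hyperplanes, so no dimension count on hyperplanes with contact $\geqslant 3$ can produce a simply tangent one; the inertia you actually get is the one computed in Lemmas~\ref{AGL1} and~\ref{inertia} ($\AGL_1(q)$ on each of $s$ blocks, elements of cycle type $(q-1)^s$), not a transposition. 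Worse, your intended conclusion $G=S_d$ is false in general: by Remark~\ref{RathmannAn} (Rathmann's Theorem~2.10), when $\Char K \neq 2$ one has $G \subseteq A_d$ precisely when the separable degree of the Gauss map is even, and in that case $G$ contains no transposition at all, so no correct argument can manufacture one. The theorem as stated only asserts $A_d \subseteq G$, and that is sharp.

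Because of this, the actual proof (Rathmann's, and the route this paper takes via Theorem~\ref{SpaceMundanity}, which covers the statement since a smooth nondegenerate curve in $\PP^r$, $r\geqslant 4$, is nonstrange) is structured quite differently: one first proves \emph{high} transitivity ($r$-transitivity, Proposition~\ref{r-transitive}, by induction on projections --- note your 2-transitivity sketch also glosses the possibility that projection from a point of $X$ is not birational onto its image, which is why the paper's Lemma~\ref{two-trans} argues via a pencil of hyperplanes through a codimension-$2$ space instead), and then combines the inertia elements of Lemma~\ref{inertia} with the classification of multiply transitive groups (Proposition~\ref{triply}, Theorem~\ref{Jones}, cycle types of Mathieu groups) to eliminate every proper $3$-transitive overgroup candidate and conclude $A_d \subseteq G$. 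Your proposal as written reproduces the characteristic-zero argument (cf.\ Remark~\ref{introzero}) and does not close the characteristic-$p$ gap it correctly identifies.
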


\begin{remark}
	Gareth Jones noticed that Rathmann's paper \cite{Rathmann1987} contains a group-theoretic error: the list of triply transitive groups in \cite{Rathmann1987}*{Theorem~2.4} is incomplete. It is unclear if this gap is easy to fix. For instance, one of the omitted Mathieu groups $\Aut(M_{22})$ is not contained in the alternating group, so the proof of \cite{Rathmann1987}*{Proposition~2.14} requires a separate analysis of this case.
\end{remark}
Rathmann also gave an example of a smooth rational curve in $\PP^3$ with $G=\PGL_2(q)$. Rathmann's theorem is important for understanding statistics over function fields, for example the large characteristic version of the Bateman-Horn conjecture (see \cite{Entin-preprint}*{Section~6} for examples of such statistical problems). In Section~\ref{tangents} we prove an extension of Rathmann's theorem that applies to nonsmooth curves in $\PP^3$. Recall that a projective curve is called \slantsf{strange} if all of its tangent lines pass through a common point. Strange curves are rare, for example the only smooth strange curves are lines in any characteristic and conics in characteristic $2$. The tangent variety of $X$ (also called the tangent developable surface) is the Zariski closure of the union of lines tangent to $X$. 

\begin{theorem}\label{intro-sectional}
	 Assume $X \subset \PP^r$ is an integral nondegenerate curve of degree $d$. Assume that $r \geqslant 3$ and $X$ is not strange. Then exactly one of the following is true. \hfill 
	\begin{enumerate}
		\item The sectional monodromy group of $X$ contains $A_d$.
		
		\item $r=3$ and the curve $X$ is projectively equivalent to the projective closure of the rational curve $\{(t,t^q, t^{q+1}), t \in K\} \subset \Aff_K^3$ for some power $q$ of the characteristic. The sectional monodromy group of $X$ is $\PGL_2(q)$.
		
		\item $r=3$, $\Char K =2$, the tangent variety of $X$ is a smooth quadric and $G \subset \AGL_n(2)$.
	\end{enumerate}

\end{theorem}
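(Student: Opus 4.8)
The plan is to pin down $G$ from the local behaviour of the family of hyperplane sections, using the classification of finite $2$‑transitive permutation groups; the exceptional cases (2) and (3) will turn out to be exactly the situations in which $X$ fails to be reflexive, which can only happen in characteristic $p$. First I would show that $G$ is $2$‑transitive by a uniform position argument: let $I_{2}\subset U\times X\times X$ be the locally closed set of triples $(H,x,y)$ with $x\neq y$, both smooth points of $X$, and $x,y\in X\cap H$. Projection of $I_{2}$ to the complement of the diagonal in $X^{\mathrm{sm}}\times X^{\mathrm{sm}}$ realizes it as a dense open subset of a $\PP^{r-2}$‑bundle over an irreducible base, so $I_{2}$ is irreducible; being finite and dominant of degree $d(d-1)$ over $U$, it forces $G$ to act transitively on ordered pairs of distinct section points.

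\emph{The reflexive case and the dichotomy.} Since $X$ is not strange, the discriminant of $\{X\cap H\}_{H\in U}$ is the dual hypersurface $X^{*}\subset(\PP^{r})^{*}$, whose general point is a hyperplane tangent to $X$ at a single smooth point with contact order $\varepsilon_{2}$, the second term of the order sequence of $X$. If $X$ is reflexive — which in the present setting amounts to $\varepsilon_{2}=2$ together with a general tangent hyperplane touching $X$ at just one point — then the monodromy around a general point of $X^{*}$ is a transposition, and a $2$‑transitive group containing a transposition is $S_{d}$, so case (1) holds. (In characteristic $0$ every non‑degenerate curve other than a line is reflexive, which recovers Proposition~\ref{boring}.) Otherwise $\Char K=p>0$, and either $\varepsilon_{2}>2$ (the general point of $X$ is a flex) or a general tangent hyperplane is tangent at more than one point.

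\emph{Reduction to $r=3$ and the exceptional cases.} For $r\geqslant 4$ I would project $X$ from a general point $p\in X$ to obtain a non‑strange integral nondegenerate curve $X_{p}\subset\PP^{r-1}$ of degree $d-1$; hyperplanes through $p$ identify the monodromy of $X_{p}$ with the action of the stabilizer $G_{[p]}$ on the remaining $d-1$ points. By induction on $r$, either $G_{X_{p}}\supseteq A_{d-1}$, whence $|G|=d\cdot|G_{[p]}|\geqslant d!/2$ and $G\supseteq A_{d}$, or $X_{p}$ is of exceptional type, which forces $r-1=3$; the residual case $r=4$ is then settled by checking directly, via order‑sequence considerations, that no non‑strange curve in $\PP^{4}$ can have all of its general internal projections of the exceptional types (those curves being strongly constrained). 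So take $r=3$. If $\varepsilon_{2}>2$, the numerical restrictions on order sequences of non‑strange space curves force $\varepsilon_{2}=q$ a power of $p$, $\varepsilon_{3}=q+1$, $d=q+1$, and a local‑to‑global reconstruction identifies $X$ with the projective closure of $\{(t,t^{q},t^{q+1})\}$; rewriting its generic section $a_{3}t^{q+1}+a_{2}t^{q}+a_{1}t+a_{0}=0$ as $t^{q}=\mu(t)$ for a Möbius transformation $\mu$ exhibits the $q+1$ solutions as a $\PP^{1}(\F_{q})$‑torsor, so $G\cong\PGL_{2}(q)$ — case (2) (and case (1) when $q\leqslant 4$). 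If instead $\varepsilon_{2}=2$ but a general tangent plane is tangent at several points, one shows that $\Char K=2$, that the union of the tangent lines of $X$ is a smooth quadric $Q\supset X$, and that writing a hyperplane section in terms of the two rulings of $Q$ endows the $d$ section points with an $\F_{2}$‑affine structure preserved by $G$; hence $d=2^{n}$ and $G\leqslant\AGL_{n}(2)$ — case (3). The three outcomes are mutually exclusive on group‑theoretic grounds.

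\emph{Expected main obstacle.} The first two steps are routine; the real work is the last one. The hard parts are: showing that $\varepsilon_{2}>2$ admits no freedom beyond the single curve $\{(t,t^{q},t^{q+1})\}$ (the order‑sequence classification together with the local‑to‑global reconstruction, which is also where the link with Galois groups of generic trinomials enters), the multiply‑tangent analysis (simultaneously recognizing the tangent developable as a smooth quadric, controlling the embedding $X\subset Q$, and extracting the $\F_{2}$‑affine structure that confines $G$ to $\AGL_{n}(2)$), and the $\PP^{4}\to\PP^{3}$ bookkeeping that disposes of the last higher‑dimensional possibility.
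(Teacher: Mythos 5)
There is a genuine gap: your outline reproduces the easy steps (2-transitivity, the transposition in the reflexive case, projection from a point of $X$), but the actual engine of the proof is missing, and the assertions you substitute for it are false as stated. With only 2-transitivity you have no usable classification to confront; the paper proves that $G$ is $r$-transitive for nonstrange curves (Proposition~\ref{r-transitive}, including a separate argument when $X$ lies on a quadric in $\PP^3$), so that for $r\geqslant 3$ the group lies on the short list of triply transitive groups (Proposition~\ref{triply}: $\AGL_n(2)$, one group of degree $16$, $\PSL_2(\ell)\leqslant G\leqslant \PGaL_2(\ell)$, and Mathieu groups), and then eliminates or identifies each case using a fine inertia structure obtained from a general tangent line of a \emph{plane projection} of $X$ (Proposition~\ref{G-under-proj}, Proposition~\ref{Gauss-proj}, Lemmas~\ref{AGL1} and~\ref{inertia}): a subgroup surjecting onto $\AGL_1(q)$ on each tangency cluster $A_i$, auxiliary groups $H_i$ fixing $B\cup A_i$ and moving the other clusters, and an element acting by $(q-1)$-cycles. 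Your proposal never addresses this list at all (the Mathieu groups, the degree-$16$ group, and the subcases $s\geqslant 2$ of $\PGaL_2$ are simply not mentioned), and nothing in your sketch produces the local data that powers those eliminations.

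Concretely, the two claims carrying all the weight in your $r=3$ analysis are wrong. First, ``$\varepsilon_2>2$ plus numerical restrictions on order sequences force $d=q+1$ and $X\cong\overline{\{(t,t^q,t^{q+1})\}}$'' cannot be true: the order sequence does not bound the degree. By Kaji's theorem (cited before Theorem~\ref{Jones}) there are nonstrange curves in $\PP^3$ of arbitrarily large degree whose Gauss map is inseparable of degree $q$, hence with $\varepsilon_2=q>2$; for these the theorem asserts $A_d\subseteq G$, not case (2). In the paper, $d=q+1$ is an \emph{output} of the group theory (the subcase $q=r$ inside the $\PSL_2\leqslant G\leqslant\PGaL_2$ analysis, reached only after all other subcases are killed by the inertia data), and only then does Proposition~\ref{skew-tangents} identify the curve. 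Second, ``$\varepsilon_2=2$ with a multiply tangent general tangent plane forces $\Char K=2$, a smooth quadric tangent surface, and an $\F_2$-affine structure'' is also not true in that generality: when the Gauss map is separable ($q=1$) but not birational ($s>1$), Lemma~\ref{inertia}(1) still yields a transposition and $G=S_d$, in any odd characteristic, with no quadric in sight. In the paper the quadric in case (3) is not derived from the tangency pattern; rather, \emph{assuming} $G\subseteq\AGL_n(2)$ one deduces $q=2$, $\#B=2$, $s=2^{n-1}-1$, and then rules out a non-quadric tangent surface by intersecting $X$ with a quadric through three general tangent lines. Finally, your inductive reduction leaves the residual case $r=4$ ``settled by order-sequence considerations'' with no argument, whereas the paper disposes of $r\geqslant 4$ by $4$-transitivity (case (2)) and by the fact that $X$ lies on its tangent surface together with the quadric-intersection bound (case (3)). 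As it stands, the proposal assumes the hard part rather than proving it.
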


Note that the curves that satisfy the assumptions of Theorem~\ref{intro-sectional} and have small sectional monodromy groups are either rational or contained in a smooth quadric. Rational curves and curves lying on quadrics are not important for the applications to the degree-genus problem.

For plane curves, sectional monodromy groups are not known even in the following very simple case.

\begin{example}
	Let $X_{n,m}$ be the rational curve $x^n=y^mz^{n-m}$ for some relatively prime integers $n>m>0$. Then the sectional monodromy group of $X$ is the Galois group of the trinomial $x^n + ax^m +b$ over $K(a,b)$. Classifying possible Galois groups of generic trinomials is an old  problem; see \cite{Cohen1980}, \cite{Smith1977}, \cite{Uchida1970}, \cite{Smith1984}. A closely related problem of classifying Galois groups of trinomials over the function field $K(t)$ when $a$ and $b$ are specialized to powers of $t$ has also been widely studied; see \cite{Abhyankar1992}, \cite{Abhyankar1993}, \cite{Abhyankar1993.2}, \cite{Abhyankar1994}, \cite{Abhyankar-Seiler-Popp1992}, \cite{Conway-et-al2010}.
\end{example}

The curves $X_{m,n}$ do not satisfy both of the assumptions of Theorem~\ref{intro-sectional}: they are evidently plane curves ($r=2$) and many of them are strange. Nevertheless, we can use methods similar to those in the proof of Theorem~\ref{intro-sectional} to study the curves $X_{m,n}$. In section~\ref{trinomials} we prove the following theorem.
\begin{theorem}\label{intro-trinomials}
	Let $K$ be an algebraically closed field of characteristic $p$. Let $n,m$ be a pair of relatively prime integers with $m<n$. Let $G$ be the Galois group of the polynomial $x^n+ax^m+b$ over $K(a,b)$.
	\begin{enumerate}
		\item If $n=p^d$ and $m=1$ or $m=p^d-1$, then $G=\AGL_1(p^d)$.
		\item If $n=6$, $m=1$ or $m=5$, and $p=2$, then $G=\PSL_2(5)$.
		\item If  $n=12$, $m=1$ or $m=11$, and $p=3$, then $G=M_{11}$ (a Mathieu group, see \cite{Conway1999}).
		\item If $n=24$, $m=1$ or $m=23$, and $p=2$, then $G=M_{24}$.
		\item If  $n=11$, $m=2$ or $m=9$, and $p=3$, then $G=M_{11}$.
		\item If $n=23$, $m=3$ or $m=20$, and $p=2$, then $G=M_{23}$.
		\item Let $q\colonequals p^k$ for some integer $k$. If $n=(q^d-1)/(q-1)$, $m=(q^s-1)/(q-1)$ or $m=n-(q^s-1)/(q-1)$ for some  integers $s,d$, then $G=\PGL_d(q)$.
		\item If none of the above holds, then $A_n \subset G$.	
	\end{enumerate}
\end{theorem}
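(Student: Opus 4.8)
The plan is to realize $G$ as the sectional monodromy group of the rational plane curve $X_{n,m}\subset\PP^2$ cut out by $x^n=y^mz^{n-m}$, and then to read $G$ off the geometry of the branched cover that computes it. Parametrizing $X_{n,m}$ by $[s:u]\mapsto[s^mu^{n-m}:s^n:u^n]$, the line $a_0x+a_1y+a_2z=0$ pulls back to $a_1s^n+a_0s^m+a_2=0$, so with $a=a_0/a_1$ and $b=a_2/a_1$ the roots of $x^n+ax^m+b$ are exactly the points of $X_{n,m}$ on that line; thus $G$ is the Galois group of the splitting field of $x^n+ax^m+b$ over $K(a,b)$, and it is transitive because the trinomial is linear --- hence irreducible --- in $b$ over $K(a)[x]$. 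A Lefschetz-type argument lets me restrict the incidence cover over $(\PP^2)^*$ to a general pencil of lines, so that $G$ becomes the monodromy group of a degree-$n$ branched cover of $\PP^1_K$ and is generated by the images of its inertia subgroups; these are controlled by the tangent lines of $X_{n,m}$, its inflectional behaviour, and its two singular points, which lie at the images $[0:1:0]$ and $[0:0:1]$ of $s=\infty$ and $s=0$.

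First I would prove that $G$ is $2$-transitive, hence primitive. The variety of ordered pairs of distinct roots is rational: from distinct roots $s_1,s_2$ one recovers $a=-(s_1^n-s_2^n)/(s_1^m-s_2^m)$ and $b=s_1^ms_2^m(s_1^{n-m}-s_2^{n-m})/(s_1^m-s_2^m)$, the hypothesis $\gcd(m,n)=1$ ensuring that distinct roots have distinct $m$-th powers; so $\{(s_1,s_2)\}\cong\Aff^2$ dominates $\Aff^2_{(a,b)}$ with degree $n(n-1)$, which forces the two-point stabilizer in $G$ to have index $n(n-1)$.

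The argument then splits according to whether $X_{n,m}$ is strange. A direct computation with the Gauss map shows $X_{n,m}$ is strange precisely when $p$ divides one of $n$, $m$, $n-m$, and that otherwise --- because $\gcd(m,n)=1$ --- the Gauss map is birational onto its image; in the non-strange case a general tangent line is therefore a simple tangent at a smooth non-inflectional point and contributes a transposition to $G$, so by primitivity and Jordan's theorem $G=S_n$, which is alternative~(8). If $X_{n,m}$ is strange, the substitution $x\mapsto1/x$ identifies the Galois groups of $x^n+ax^m+b$ and $x^n+a'x^{n-m}+b'$, so after possibly interchanging $m$ and $n-m$ I may assume $p\mid n$ or $p\mid m$. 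In these cases I would compute the cycle types of the inertia generators: a general tangent line through the strange point has contact order equal to the inseparable degree of the projection from that point (so it contributes a product of $p$-power cycles), the branches over the duals of the two singular points contribute cycles of tamely controlled length, and the remaining branch points give Artin--Schreier-type contributions. Combining these permutations with $2$-transitivity and the classification of finite $2$-transitive groups, one shows that either $G$ contains a transposition (or some other element that no proper $2$-transitive group of degree $n$ can contain), in which case $A_n\subseteq G$, or $n$ and $m$ fall into one of the shapes in (1)--(7), in which case one identifies $G$ exactly: for $n=p^d$, $m=1$ the roots of $x^{p^d}+ax+b$ form a coset of the $\F_{p^d}$-line $\ker(x^{p^d}+ax)$, the Galois action on it is $\F_{p^d}$-semilinear so $G\le\AGL_1(p^d)$, the inertia at $b=\infty$ supplies the translation subgroup and the local monodromy along $\{a=0\}$ supplies the multipliers $\F_{p^d}^*$, giving $G=\AGL_1(p^d)$; when $n=(q^d-1)/(q-1)$ and $m=(q^s-1)/(q-1)$ one exhibits a $\PP^{d-1}(\F_q)$-structure on the root set forcing $\PSL_d(q)\le G\le\PGaL_d(q)$ and then isolates $\PGL_d(q)$; and for the remaining small degrees one shows $G$ is triply transitive (quadruply transitive when $n=11,23,24$), and the classification of multiply transitive groups leaves only the Mathieu group of that degree.

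The main obstacle is the strange case. On the geometric side one must extract the inertia data precisely, which means understanding the wildly ramified local pictures produced by tangent lines whose contact order is divisible by $p$; on the group-theoretic side one must navigate the (classification-dependent) list of $2$-transitive groups, ruling out every ``accidental'' almost simple action --- the Suzuki, Ree and $\mathrm{PSU}_3$ families, the symplectic groups in their $2$-transitive actions, $A_7$ on $15$ points, and so on --- for all the relevant $n$, together with the shorter lists of $3$- and $4$-transitive groups for the sporadic cases. Here the group-theoretic subtleties flagged in Rathmann's work mean this step must be carried out with care. The other delicate point is showing that the exceptional groups are realized exactly, and not as proper subgroups or overgroups; this is where the ``genus zero'' structure of the cover --- only a few inertia generators, each of constrained cycle type --- does the work.
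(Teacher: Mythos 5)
Your framework coincides with the paper's: realize $G$ as the sectional monodromy group of $X_{n,m}$, split according to strangeness (equivalently $p\mid nm(n-m)$), extract inertia data from tangent lines, and finish with classification-dependent group theory; your proof of $2$-transitivity via irreducibility of the ordered-pair variety $(s_1,s_2)\mapsto(a,b)$ is a legitimate (and slicker) replacement for the paper's pencil argument. But two of your load-bearing steps are wrong or missing. First, in the nonstrange case your claim that the Gauss map is birational fails in characteristic $2$: when $p=2$ and $p\nmid nm(n-m)$ the Gauss map of $X_{n,m}$ is purely inseparable of degree $2$, a general tangency is a wild (Artin--Schreier) local situation, and the tame ``simple tangent at a non-inflectional point gives a transposition'' argument does not apply. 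One still gets a transposition, but only after a genuine local computation over $\Klaur{t}$ (the paper's Lemma on $\AGL_1(q)$-inertia with $q=2$, whose nondegeneracy hypothesis $v(a_1a_q-a_0a_{q+1})=1$ must be verified for the tangent-line family); your sketch has no substitute for this.

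Second, and more seriously, your mechanism for pinning down the exceptional groups cannot work as stated. Showing $G$ is $3$- or $4$-transitive never excludes $A_n$ and $S_n$, so ``the classification leaves only the Mathieu group of that degree'' is not a valid inference; worse, for $n=6$, $p=2$ the answer $\PSL_2(5)$ is not even $3$-transitive on $6$ points, so the transitivity you propose to prove is false there. What is actually needed is an \emph{upper} bound on $G$ in each exceptional case, and in characteristic $p$ the ``genus zero cover with few inertia generators'' heuristic gives none (wild ramification; no Riemann existence theorem). The paper supplies these bounds by ad hoc means you do not mention: an explicit splitting of $x^6+ax+b$ into two cubics over a degree-$10$ extension for $\PSL_2(5)$; Serre--Abhyankar linearization producing an additive polynomial divisible by $x^{24}+ax+b$ for $M_{24}$, and Abhyankar's $x^{23}+tx^3+1$ result plus a change of variables for $M_{23}$; Uchida's computations for both $M_{11}$ cases; and the substitution $y=x^{\ell-1}$ (which you do gesture at) for $\PGL_d(q)$. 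Equally absent is the argument that \emph{only} the listed $(n,m,p)$ can be exceptional: the paper needs Jones's theorem for primitive groups containing an $n$-cycle, a lengthy projective-geometry analysis of the fixed-point configuration inside $\PP^{r-1}(\F_\ell)$ to force $\ell$ to be a power of $p$ and $m=(\ell^s-1)/(\ell-1)$, and tables of trinomial factorizations over finite fields to eliminate the Mathieu possibilities for all other $m$. Your proposal asserts this dichotomy but provides no mechanism for it, so the core of the theorem remains unproved.
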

{\danger \bf Warning.} The aforementioned result of Rathmann as well as the results of this paper depend on the classification of finite simple groups.
\section{Transitivity of Sectional Monodromy Groups}

From now on, let $K$ denote a fixed algebraically closed field. We first give a more geometric definition of the sectional monodromy groups from the introduction. Let $r \geqslant 2$ be an integer and let $X \subset \PP^r$ be a degree $d$ projective nondegenerate integral curve. Consider the incidence scheme $I=I_X \subset X \times (\PP^r)^*$ parameterizing pairs $(P, H)$ of points $P \in X$ and hyperplanes $H \subset \PP^r$ subject to the condition $P \in H$. The scheme $Z$ from the introduction is the generic fiber of the projection $\pi=\pi_X\colon I_X \to (\PP^r)^*.$ We call the covering $\pi_X$ the sectional covering associated to $X$. 

\begin{lemma}\label{irreducible}
	The incidence scheme $I$ is irreducible. The morphism $\pi$ is finite of degree $d$ and generically \'{e}tale.
\end{lemma}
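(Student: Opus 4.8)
The plan is to analyze the incidence scheme $I \subset X \times (\PP^r)^*$ through the first projection $p_1\colon I \to X$, and to handle the claims about $\pi = p_2$ afterwards. First I would observe that for a fixed point $P \in X$, the fiber $p_1^{-1}(P)$ is the set of hyperplanes through $P$, which is a linear subspace of $(\PP^r)^*$ isomorphic to $\PP^{r-1}$. Thus $p_1$ realizes $I$ as a projective bundle (more precisely, the projectivization of the kernel of the evaluation map restricted to $X$) over the integral curve $X$; since the base $X$ is irreducible and the fibers are irreducible of constant dimension $r-1$, the total space $I$ is irreducible of dimension $r$. One should be slightly careful that $I$ really is this $\PP^{r-1}$-bundle and not something with embedded or thickened structure: because $X$ is reduced and the incidence condition $P \in H$ cuts out, fiberwise over $X$, a hyperplane in $(\PP^r)^*$, the scheme $I$ is in fact smooth over $X$, hence integral.

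Next, for the statement about $\pi = p_2\colon I \to (\PP^r)^*$: the fiber of $\pi$ over a hyperplane $H$ is $X \cap H$ (with its scheme structure), so $\pi$ is finite exactly where $X \cap H$ is finite, i.e.\ over the open locus of hyperplanes not containing $X$; since $X$ is nondegenerate this locus is all of $(\PP^r)^*$, so $\pi$ is quasi-finite. It is also proper (being a closed subscheme of $X \times (\PP^r)^*$, projected to the second factor, with $X$ proper), hence finite. To compute the degree, intersect with a general $H$: by Bézout / the definition of degree, a general hyperplane section of the degree $d$ curve $X$ consists of $d$ points counted with multiplicity, so $\deg \pi = d$. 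Combined with $\dim I = r = \dim (\PP^r)^*$, this also re-confirms that $\pi$ is dominant and generically finite.

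For generic étaleness, since the characteristic may be positive, the key point is separability of the generic fiber. The cleanest route is to exhibit a single hyperplane $H_0$ meeting $X$ transversally in $d$ distinct smooth points; then $\pi$ is étale at each of those $d$ points, so the étale locus is a nonempty open subset of $I$, and its image misses a proper closed subset of $(\PP^r)^*$, giving generic étaleness. To produce such an $H_0$: let $X^{\smooth}$ be the smooth locus (nonempty since $X$ is integral); a general hyperplane avoids the finite singular locus, and by a Bertini-type argument — pick $d$ general smooth points of $X$, check the hyperplanes through them form a nonempty linear system, and use that the tangent lines at $d$ general points impose independent conditions — one finds $H_0$ transverse to $X$ at $d$ distinct smooth points.

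The main obstacle is the generic étaleness in characteristic $p$: an ordinary Bertini theorem over an infinite field gives a smooth (hence reduced) hyperplane section, but one must make sure the general section really consists of $d$ \emph{reduced} points and not, say, $d/p$ points each of multiplicity $p$, which could happen for pathological (e.g.\ strange) curves. The resolution is that this degeneracy forces every tangent line of $X$ to meet the general hyperplane $H_0$ inside $X \cap H_0$, which (by a dimension count on the tangent variety, or by the fact that a non-strange curve has only finitely many tangent lines through a general point) cannot happen for a general $H_0$ when $X$ is nondegenerate in $\PP^r$ with $r \geq 2$; alternatively one invokes that the Gauss map of a nondegenerate curve is generically unramified outside characteristic-specific exceptions, but the direct transversality argument above is self-contained. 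I expect this to be the only delicate point; everything else is a routine bundle and properness argument.
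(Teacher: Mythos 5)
Your irreducibility and finiteness arguments are exactly the paper's: the first projection exhibits $I$ as a $\PP^{r-1}$-bundle over the integral curve $X$, and $\pi$ is finite because it is quasi-finite (no hyperplane contains the nondegenerate $X$) and proper, of degree $d$ by the definition of degree. The divergence, and the problem, is in the generic \'etaleness step, which is precisely the one delicate point in characteristic $p$; the paper simply cites Jouanolou's characteristic-free Bertini theorem, while you try to build a transversal hyperplane $H_0$ by hand, and that construction has a genuine gap. You propose to ``pick $d$ general smooth points of $X$'' and take a hyperplane through them: this is impossible as soon as $d>r$, since hyperplanes form only an $r$-dimensional family and $d$ general points of a nondegenerate curve span all of $\PP^r$ (recall $d\geqslant r$, with equality only for rational normal curves). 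So the proposed $H_0$ does not exist, and the step fails. Your fallback justifications are also not available: the appeal to ``a non-strange curve has only finitely many tangent lines through a general point'' imports a non-strangeness hypothesis that the lemma does not have and cannot have --- the paper applies this lemma (and its consequences, e.g.\ double transitivity) to strange curves such as the trinomial curves $X_{n,m}$ of Section~\ref{trinomials} --- and the claim that the Gauss map is generically unramified is false in characteristic $p$, where it can be arbitrarily inseparable. Even your parenthetical ``dimension count on the tangent variety'' is not quite the right object: for $r=2$ (which the lemma covers) the union of tangent lines is all of $\PP^2$, so that count proves nothing there.

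The correct self-contained repair is a dimension count in the dual space rather than in $\PP^r$: the incidence set of pairs $(P,H)$ with $P$ a smooth point of $X$ and $T_PX\subset H$ has dimension $1+(r-2)=r-1$, and the hyperplanes through the finitely many singular points form a set of dimension $r-1$; hence the union of their images in $(\PP^r)^*$ is a proper closed subset. Any $H$ outside it meets $X$ only at smooth points and contains no tangent line there, so $X\cap H$ consists of $d$ distinct reduced points and $\pi$ is \'etale over $H$. This argument is uniform in the characteristic, needs no non-strangeness, and works for $r=2$; it is exactly what the Bertini statement cited in the paper (\cite{Jouanolou1983}*{Proposition~6.4}) encapsulates. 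With this substitution your outline becomes a complete proof; as written, the \'etaleness step would not survive for strange curves or for $d>r$.
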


\begin{proof}
	The fibers of the first coordinate projection $I \to X$ are projective spaces of dimension $r-1$; hence $I$ is irreducible. The morphism is generically \'{e}tale by Bertini's theorem \cite{Jouanolou1983}*{Proposition~6.4}, and finite since it is quasi-finite and proper.
\end{proof}

Lemma~\ref{irreducible} implies that the sectional monodromy group $G$ is well-defined and acts transitively on a set of $d$ points. If $U \subset (\PP^r)^*$ is a dense open subset over which $\pi$ is \'{e}tale and $\Omega$ is a fiber of $\pi$ over a geometric point of $U$, then $G$ can be identified with the image of $\pi_1^{\et}(U)$ in $\mathrm{Sym}(\Omega)$. If $Z \subset U$ is a closed subset, then the monodromy group of $\pi^{-1}(Z) \to Z$ is a permutation subgroup of $G$. We now show that the action of $G$ on $\Omega$ is always 2-transitive, and has even higher transitivity degree if the curve is not ``strange''.

\begin{lemma}\label{two-trans}
	The sectional monodromy group acts doubly transitively.
\end{lemma}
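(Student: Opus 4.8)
My plan is to recast double transitivity as a connectedness statement about an auxiliary \'etale cover and then establish that connectedness geometrically. Fix a dense open $U \subset (\PP^r)^*$ over which $\pi$ is finite \'etale (this exists by Lemma~\ref{irreducible}), and let $\Omega$ be a geometric fiber; since $I$ is irreducible, $\pi^{-1}(U)$ is connected, so $G = \im(\pi_1^{\et}(U) \to \mathrm{Sym}(\Omega))$ is transitive, and it is $2$-transitive exactly when the $\pi_1^{\et}(U)$-set $\Omega \times \Omega \smallsetminus \Delta_\Omega$ is transitive, i.e.\ when the corresponding finite \'etale cover of $U$ is connected. Since $\pi$ is finite \'etale over $U$, the diagonal is open and closed in $\pi^{-1}(U) \times_U \pi^{-1}(U)$, so that cover is $(\pi^{-1}(U) \times_U \pi^{-1}(U)) \smallsetminus \Delta$, which is the restriction over $U$ of the open subscheme $W \colonequals (I \times_{(\PP^r)^*} I) \smallsetminus \Delta_I$ of the fiber product ($\Delta_I$ is closed because $\pi$ is separated). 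Set-theoretically, $W = \{(P, P', H) : P, P' \in X,\ P \neq P',\ P, P' \in H\}$. So it suffices to show that $W$ is irreducible and dominates $(\PP^r)^*$: then $W \cap \pi_W^{-1}(U)$ is a nonempty open, hence irreducible, hence connected, subset of $W$.

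To prove $W$ irreducible I would use the other projection $q \colon W \to (X \times X) \smallsetminus \Delta_X$, $(P, P', H) \mapsto (P, P')$. Its base is irreducible: $X$ is geometrically integral over the algebraically closed field $K$, hence so is $X \times X$, and we delete a proper closed subscheme. Its fiber over $(P, P')$ is the set of hyperplanes through the two distinct points $P, P'$ of $\PP^r$, which is a $\PP^{r-2}$. In fact $q$ is a projective bundle: $W \cong \PP(\calK)$, where $\calK$ is the rank-$(r-1)$ kernel of the evaluation map $\HH^0(\PP^r, \calO_{\PP^r}(1)) \tensor \calO \to \mathrm{pr}_1^* \calO_X(1) \directsum \mathrm{pr}_2^* \calO_X(1)$ on $(X \times X) \smallsetminus \Delta_X$, which is surjective there because $\calO_{\PP^r}(1)$ separates distinct points of $\PP^r$. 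A projective bundle over an irreducible base is irreducible. Dominance of $\pi_W \colon W \to (\PP^r)^*$ is then immediate: as $X$ is nondegenerate in $\PP^r$ with $r \geqslant 2$, its degree $d$ is at least $2$, so a general hyperplane meets $X$ in at least two distinct points.

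I do not expect a genuine obstacle; the statement is soft. The two points needing care are (i) the bookkeeping identification of the off-diagonal cover $(\pi^{-1}(U) \times_U \pi^{-1}(U)) \smallsetminus \Delta$ with $W \cap \pi_W^{-1}(U)$, which relies on the diagonal of a finite \'etale morphism being open and closed, and (ii) checking that the evaluation map above is surjective precisely on the complement of $\Delta_X$, for which it matters that $X$ is genuinely embedded in $\PP^r$, so that distinct points of $X$ are distinct points of $\PP^r$ and impose independent linear conditions on hyperplanes. Once these are in place, irreducibility of $W$, and hence $2$-transitivity of $G$, follow formally.
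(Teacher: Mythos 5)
Your argument is correct, but it is genuinely different from the paper's. The paper fixes a hyperplane $H$ meeting $X$ in $d$ distinct smooth points, picks $P \in H \cap X$ and a codimension-$2$ linear space $V$ with $V \cap X = P$, and restricts $\pi$ to the pencil $l \subset (\PP^r)^*$ of hyperplanes through $V$: the preimage splits as a constant section through $P$ plus a component isomorphic to $X$, hence irreducible, and generic \'etaleness of that second component over $l$ produces a subgroup of $G$ fixing one point and acting transitively on the rest, which together with transitivity of $G$ gives $2$-transitivity. You instead translate $2$-transitivity into connectedness of the off-diagonal fiber square $(\pi^{-1}(U)\times_U \pi^{-1}(U))\smallsetminus\Delta$ and prove it by showing the universal two-point incidence variety $W$ is irreducible, via the $\PP^{r-2}$-bundle structure over $(X\times X)\smallsetminus\Delta_X$ (two distinct points of $\PP^r$ always impose independent conditions on hyperplanes, which is why the bundle is global). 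Your route is softer and avoids choosing $V$ or analyzing components of a pencil; its limitation is that it is specific to pairs — for triples the analogous incidence variety fails to be a bundle over collinear configurations, and indeed triple transitivity can fail in characteristic $p$ — whereas the paper's pencil/stabilizer technique is exactly the one it reuses (with projections and induction) to get $r$-transitivity for nonstrange curves in Proposition~\ref{r-transitive} and the inertia computations of Lemma~\ref{inertia}. The only points in your write-up that deserve an explicit word are the identification of $W$ with $\PP(\calK)$ (topologically this is all you need for irreducibility, so the scheme-theoretic comparison can be sidestepped) and the remark that $d \geqslant 2$, which you do address via nondegeneracy.
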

\begin{proof}
	Choose a hyperplane $H$ that intersects $X$ at $d$ distinct smooth points. Let $P$ be one of the points in $H \cap X$. Choose an $(r-2)$-dimensional subspace $V$ in $\PP^r$ such that $V \cap X = P$. 
	Consider the line $l \subset (\PP^r)^*$ corresponding to the family of hyperplanes containing $V$. Let $Z=\pi^{-1}(l)$. Projection of $Z$ to $X$ is an isomorphism over $X \setminus P$. The fiber above $P$ is isomorphic to $l$ (via $\pi$). Therefore $Z$ decomposes into a union of two irreducible components $Z=Z_1 \cup Z_2$, where $\pi\colon Z_1 \to l$ is an isomorphism and $Z_2$ is isomorphic to $X$. Since the fiber of $\pi'=\pi|_{Z_2}\colon Z_2 \to l$ above the point of $l$ corresponding to $H$ is smooth, the covering $\pi'$ is generically \'{e}tale. Therefore, the monodromy group $G$ contains a subgroup that acts via fixing one point and acting transitively on the others. Thus $G$ is doubly transitive.
\end{proof}

For every $r$ there are examples of integral nondegenerate curves $X \subset \PP^r$ for which $G_X$ is not triply transitive (see \cite{Rathmann1987}*{Example~1.2}). However we will prove that $G_X$ is $r$-transitive if $X$ is not ``strange''. We recall the definition of a strange curve.

\begin{definition}
	An integral curve $X$ in $\PP^r$ is called \slantsf{strange} if the tangent lines to $X$ at the smooth points pass through a common point.
\end{definition}

\begin{remark}
	Equivalently, an integral curve is strange if the tangent lines to $X$ at all but finitely many smooth points pass through a common point.
\end{remark}
A secant of a curve $X$ is a line intersecting it in at least $2$ points.
\begin{proposition}\label{multi-secant}
	If every secant intersects $X$ in at least three points, then $X$ is strange.
\end{proposition}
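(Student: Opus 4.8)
The plan is to prove the contrapositive: assuming $X$ is not strange (hence not a line, lines being strange), I will produce a secant meeting $X$ in exactly two points. Throughout I take $r\ge 3$, which is the relevant case; for $r=2$ the statement follows by a direct Bézout argument. The tool is projection from a general point $P\in X$. Extending $\pi_P\colon X\dashrightarrow\PP^{r-1}$ to a morphism on the normalization gives a nondegenerate curve $\bar X_P\subset\PP^{r-1}$ of degree $\bar d_P$ with $(\deg\pi_P)\bar d_P=d-1$, because pulling back a general hyperplane of $\PP^{r-1}$ is the same as cutting $X$ with a general hyperplane through $P$, which meets $X$ at $P$ with multiplicity $1$ and in $d-1$ further points.

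First I would record a dichotomy. For general $P,Q\in X$ the chord $\overline{PQ}$ meets $X$ transversally at $P$ and at $Q$: transversality at $Q$ because $\{Q\in X:P\in T_QX\}$ is Zariski closed, so were it to contain a general point it would be all of $X$, forcing all tangent lines through $P$ and making $X$ strange; transversality at $P$ because $T_PX\cap X$ is finite. Hence $\pi_P^*(\pi_P(Q))=(\overline{PQ}\cdot X)-P$, so $\deg(\overline{PQ}\cap X)=\deg\pi_P+1$. If $\pi_P$ is birational for some general $P$, then for general $Q$ we get $\overline{PQ}\cap X=\{P,Q\}$ scheme-theoretically — a two-point secant — and we are done. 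So it suffices to rule out the alternative that $\deg\pi_P\ge 2$ for every general $P$, i.e.\ (by the dichotomy and non-strangeness) that the general chord of $X$ through a general point is an honest trisecant. Call this situation $(\star)$.

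To rule out $(\star)$ I would reduce to $r=3$ and then use the geometry of cones. For $r\ge 4$, the secant variety $\operatorname{Sec}(X)$ is a threefold, so a general point $O\notin\operatorname{Sec}(X)$ gives an isomorphism $\pi_O\colon X\isomto X'\subset\PP^{r-1}$ onto a nondegenerate curve; a trisecant of $X$ missing $O$ maps to a trisecant of $X'$, so $(\star)$ is inherited, and $X'$ is again non-strange unless every tangent line of $X$ meets a fixed line, a case one isolates and handles on its own. When $r=3$, condition $(\star)$ means the trisecants through a general $P$ sweep out the cone $C_P$ over $\bar X_P$ with vertex $P$, so $X\subset C_P$, a surface of degree $\bar d_P\le(d-1)/2$ that has a point of multiplicity $\bar d_P$ at $P$. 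As $P$ ranges over a dense subset of $X$ the cones $C_P$ are pairwise distinct (otherwise $X$ lies in their common vertex, a proper linear space, contradicting nondegeneracy), so $X$ lies on a positive-dimensional family of low-degree surfaces, each singular along $X$ at its own vertex. Intersecting two of these surfaces and applying Bézout — refined by the multiplicity $\bar d_P$ of $C_P$ at its vertex, and by the divisibility constraints relating $\bar d_P$, $\deg\pi_P$ and $d-1$ — should bound things tightly enough to force $X$ into a plane or a line, contradicting nondegeneracy.

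The crux is this last step. In characteristic $0$, the non-existence of $(\star)$ is the classical statement that the general chord of a nondegenerate space curve is not a trisecant, but the usual proof relies on uniform position of general hyperplane sections, which fails in characteristic $p$ — precisely the phenomenon this paper studies — so it cannot be invoked as a black box. The real work is in making the cone-and-Bézout argument in $\PP^3$ genuinely close for all $d$, and in disposing of the strange and ``tangent lines meeting a fixed line'' curves that surface during the reductions.
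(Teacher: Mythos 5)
The paper does not actually prove this proposition; it is quoted from Hartshorne (the classical Lluis--Samuel ``every secant a multisecant $\Rightarrow$ strange'' result), so the benchmark is that classical characteristic-free proof. Your preliminary reductions are sound: for a nonstrange $X$ and general $P,Q$ the chord $\overline{PQ}$ meets $X$ with multiplicity one at both points, so if the projection $\pi_P$ from some general $P$ is birational you do get a two-point secant, and if $\deg\pi_P\geqslant 2$ for all general $P$ you correctly get honest trisecants through a general point (the inseparability worry is absorbed by the multiplicity-one statement at $Q$). The problem is that everything then hinges on ruling out your case $(\star)$, and that step is not proved --- you say the cone-and-Bézout argument ``should bound things tightly enough'' and yourself flag it as the real work. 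This is a genuine gap, not a missing detail: the two cones $C_P, C_Q$ have degree at most $(d-1)/2$, so Bézout only says $X$ lies on a curve (or surface intersection) of degree up to $((d-1)/2)^2$, which exceeds $d$ already for modest $d$, and no contradiction follows without a substantially new input. The classical proofs close exactly this step by a different mechanism, valid in all characteristics: one shows that if every secant through a general point of $X$ is a multisecant then the tangent lines are forced to be concurrent (a tangent-line/limit-of-trisecants argument as in Hartshorne's proposition), rather than by a degree count; the characteristic-zero shortcut via uniform position is, as you note, unavailable, but you have not replaced it with anything that terminates.

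Two secondary points are also left open. In the reduction from $r\geqslant 4$ to $r=3$ you need the projected curve to remain nonstrange, and you only say the exceptional case (all tangent lines meeting a fixed line) is ``handled on its own''; this is exactly the kind of configuration the paper has to treat carefully elsewhere (its Lemmas~\ref{threadthetangents} and~\ref{project}), so it cannot be waved away. And the $r=2$ case is not a ``direct Bézout argument'': Bézout gives $d$ intersection points with multiplicity, but producing a line whose intersection with $X$ is supported at exactly two points requires controlling contact multiplicities of tangent lines, i.e.\ precisely the Gauss-map phenomena in characteristic $p$ that make this whole subject delicate. As it stands the proposal is a plausible strategy outline, but the proposition is not proved.
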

\begin{proof}
	See \cite{Hartshorne1977}*{Proposition~3.3.8}.
\end{proof}

Most of the results that follow concern curves that are not strange.
It is natural to expect higher transitivity for nonstrange curves. This is the case, as we show in Proposition~\ref{r-transitive}. We precede the proof of Proposition~\ref{r-transitive} with two lemmas.

\begin{lemma}\label{threadthetangents}
	Let $X \subset \PP^r$, $r \geqslant 3$ be a proper nondegenerate nonstrange integral curve. Let $\calL$ denote the set of lines in $\PP^r$ that have nonempty intersection with every tangent to $X$. Then either $\calL$ is finite or $r=3$, $X$ is contained in a smooth quadric, and $\calL$ is one of the quadric's rulings.
\end{lemma}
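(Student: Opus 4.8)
The statement concerns the set $\calL$ of lines meeting every tangent line to a nonstrange curve $X\subset\PP^r$ with $r\ge 3$. My plan is to study a single such line $\ell\in\calL$ and use the tangent developable to constrain $\ell$ heavily; the dichotomy in the conclusion should emerge from a case analysis on how $\ell$ sits relative to $X$.

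\medskip

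\noindent\textbf{Step 1: Set up the correspondence and reduce to $r=3$.} Consider the tangent developable surface $T=T(X)$, the Zariski closure of the union of tangent lines; it is an irreducible surface (the image of a $\PP^1$-bundle over the smooth locus of $X$). A line $\ell$ meets every tangent line of $X$. First I would handle the case $\ell\subset T$: then generically a point of $\ell$ lies on a tangent line $L_P$ of $X$, so $\ell$ sweeps out, through each of its points, a tangent line — but a surface swept by a one-parameter family of lines through a fixed line is very restricted. If instead $\ell\not\subset T$, then $\ell$ meets $T$ in finitely many points, yet must meet the infinitely many tangent lines $L_P$; so some point $Q\in\ell$ lies on infinitely many tangent lines $L_P$. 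The closure of the set of such $P$ is all of $X$ (it is infinite, hence cofinite by irreducibility of $X$), i.e.\ $Q$ lies on the tangent line at (cofinitely many, hence, by the Remark, all) smooth points of $X$ — which makes $X$ strange, contradiction. Hence $\ell\subset T$ always, and in particular $T$ contains a line; I will use this to pin down $T$.

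\medskip

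\noindent\textbf{Step 2: Analyze lines contained in the tangent developable.} Now $\ell\subset T$ and through a general point $x\in\ell$ passes a tangent line $L_{P(x)}$, $P(x)\in X$. If the map $x\mapsto P(x)$ is constant, $\ell=L_P$ is itself a tangent line, but then $\ell$ must meet the tangent line at every other smooth point; varying that second point and using nonstrangeness, this forces all tangents to be concurrent or coplanar — in the concurrent case $X$ is strange, so I would derive a contradiction, showing the map is nonconstant. Thus we get a nonconstant family of tangent lines $\{L_{P(x)}\}_{x\in\ell}$ each meeting $\ell$; equivalently the tangent developable is (locally) covered by lines meeting $\ell$. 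By the classical structure of developable surfaces / ruled surfaces containing a line directrix, and because these lines are tangent to $X$ (so the family is not a cone with vertex on $X$ unless $X$ is strange), this should force $T$ to be a surface of degree $2$: a line $\ell$ such that \emph{every} line of one ruling-type family of $T$ meets it, for a surface swept doubly by lines, is the hallmark of a smooth quadric with $\ell$ in the opposite ruling. I would make this precise by computing: the projection of $X$ from a general point of $\ell$, or intersecting $T$ with a general hyperplane through $\ell$, to bound $\deg T\le 2$, whence $T$ is a quadric surface in $\PP^3$ (so $r=3$), and $\ell$ lies in a ruling.

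\medskip

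\noindent\textbf{Step 3: From $T=T(X)$ a quadric to $X$ on a quadric, and finiteness otherwise.} If $T$ is a quadric, it is a surface, so $X$ being a curve with $T(X)$ two-dimensional forces $X$ not to be a line; and since each tangent line $L_P\subset T$ is a line on the quadric $T$, the point $P$ lies on $T$, so $X\subset T$: $X$ lies on a quadric surface. If the quadric is singular (a cone), its unique ruling family through the vertex forces $X$ strange — so we are in the smooth quadric case, and $\calL$ consists of lines meeting every line of one ruling of $T$, which is exactly the other ruling; this gives the second alternative. Conversely, if no $\ell\in\calL$ is contained in a quadric tangent developable — equivalently if $T(X)$ has degree $\ge 3$ or $X\not\subset$ a smooth quadric — then by Steps 1–2 there can be no line meeting every tangent, \emph{unless} finitely many exist for degenerate reasons; to get the clean statement "$\calL$ is finite" I would observe that any $\ell\in\calL$ forces (by Step 1) $\ell\subset T$ and then (by Step 2) produces the quadric structure, so in the remaining cases $\calL$ has no members at all, a fortiori finite.

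\medskip

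\noindent\textbf{Main obstacle.} The delicate point is Step 2: rigorously passing from ``$\ell$ meets every tangent line and $\ell\subset T$'' to ``$T$ is a quadric and $\ell$ is in a ruling,'' while correctly excluding the cone/strange degenerations and the case $r>3$ (where no surface in $\PP^r$, $r\ge 4$, can be met by a one-parameter family of its own rulings unless it degenerates). I expect this requires a careful analysis of the second-order behavior of $X$ along $\ell$ — essentially showing the Gauss map of $T$ restricted to the rulings through $\ell$ is finite of degree $\le 2$ — and is where the hypothesis $r\ge 3$ combined with nonstrangeness does the real work; the characteristic-$p$ subtleties (inseparable Gauss maps, strange curves) must be watched throughout, which is presumably why the conclusion is stated with the explicit $\Char$-independent but geometry-specific quadric exception.
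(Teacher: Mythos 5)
There is a genuine gap, and it is located exactly where you flag your ``main obstacle,'' but it is worse than a missing computation: the statement you are trying to prove in Steps 2--3 is false. You claim that a \emph{single} line $\ell\in\calL$ contained in the tangent developable forces $\deg T\le 2$, hence that outside the quadric case ``$\calL$ has no members at all.'' The lemma deliberately does not say this: its first alternative is that $\calL$ is \emph{finite}, not empty, and nonempty finite $\calL$ really occurs. For example, in characteristic $p\ge 5$ take $q=p$ and the rational curve in $\PP^3$ parametrized by $(1:t:t^q:t^{q+2})$: it is integral, nondegenerate, and nonstrange, it lies on no quadric (the ten degree-two monomials restrict to ten distinct powers of $t$), yet every tangent line, being spanned by $(1,t,t^q,t^{q+2})$ and $(0,1,0,2t^{q+1})$, meets the line $x_0=x_2=0$. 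So a ruled surface all of whose rulings meet a fixed line need not be a quadric --- cones, scrolls, and this tangent developable all have a line directrix --- and your guiding heuristic ``a line met by every ruling line is the hallmark of a smooth quadric'' fails; one directrix, or even two, is not enough, and no amount of second-order analysis along a single $\ell$ can rescue the claim. (The paper's Lemma~\ref{project} implicitly allows for nonempty finite $\calL$ by choosing the projection center off the lines of $\calL$.)

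The hypothesis you never use is the one that drives the correct proof: assume $\calL$ is \emph{infinite}. Then for two distinct $\ell_1,\ell_2\in\calL$: if they met, every tangent line would either pass through $\ell_1\cap\ell_2$ (contradicting nonstrangeness) or lie in the plane spanned by $\ell_1,\ell_2$ (contradicting nondegeneracy); so all members of $\calL$ are pairwise skew. Any line meeting two skew lines lies in their linear span, a $\PP^3$, so all tangents and hence $X$ lie there and $r=3$. Now take \emph{three} pairwise skew members of $\calL$: every line meeting all three lies on the unique quadric through them, which is smooth because the three lines are pairwise skew; thus all tangent lines, and hence $X$, lie on this quadric, and the members of $\calL$, being pairwise skew lines on it, form a ruling. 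Your Step 1 (each $\ell\in\calL$ lies in the tangent developable) is correct but is not needed for, and does not lead to, this argument.
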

\begin{proof}
	Assume that $\calL$ is infinite. Consider two distinct lines $\ell_1, \ell_2 \in \calL$. If $\ell_1$ and $\ell_2$ intersect, then either every tangent to $X$ passes through $\ell_1 \cap \ell_2$ or every tangent to $X$ is in the plane containing $\ell_1$ and $\ell_2$. Since $X$ is nondegenerate and nonstrange, neither of these is possible. Therefore any two distinct lines from $\calL$ are skew. Consider two skew lines $\ell_1, \ell_2 \in \calL$. Since every tangent of $X$ crosses both of them, the curve $X$ lies in the dimension three projective subspace of $\PP^r$ that contains $\ell_1, \ell_2$. Hence $r=3$. A classical fact, which we prove below, is that the union of lines that intersect three pairwise skew lines is a smooth quadric. Thus there exists a smooth quadric $Q$ containing every tangent line of $X$. Elements of $\calL$ are pairwise skew and lie on $Q$, therefore $\calL$ forms one of the rulings of $Q$.
	
	We now show that the union of lines intersecting three fixed lines in $\PP^3$ is a smooth quadric. Since $10=\dim H^0(\PP^3, \calO(2)) > 3 \dim H^0(\PP^1, \calO(2))=9$, there exists a quadric containing any triple of lines. If the lines are pairwise skew, the quadric cannot be a cone or a pair of planes; therefore it must be smooth. Every line intersecting a smooth quadric in at least three points has to lie on it.
\end{proof}

\begin{lemma}\label{project}
	Let $X \subset \PP^r$, $r \geqslant 3$ be a nonstrange nondegenerate integral curve. If $r=3$ assume additionaly that $X$ is not contained in a quadric. Then the projection from a general point $P$ of $X$ is birational onto its image. Moreover, the image of a general projection is nonstrange.
\end{lemma}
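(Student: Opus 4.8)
The plan is to analyze the projection $\pi_P \colon X \dashrightarrow \PP^{r-1}$ from a general point $P \in X$ by breaking the statement into two assertions: birationality onto the image, and preservation of the nonstrange property. For the first part, recall that $\pi_P$ fails to be birational precisely when $X$ is \emph{multisecant from $P$}, i.e.\ when a general secant line through $P$ meets $X$ in a third point; equivalently when the generic point of $X$ lies on a trisecant through $P$. I would argue by contradiction: if $\pi_P$ is non-birational for $P$ ranging over a dense subset of $X$, then a general pair of points $Q, Q' \in X$ is collinear with a third point of $X$, so every (general, hence by closure every) secant of $X$ meets $X$ in at least three points. By Proposition~\ref{multi-secant} this forces $X$ to be strange, contradicting the hypothesis. (One must be slightly careful that "general secant through $P$ is trisecant" upgrades to "every secant is trisecant": a general secant of $X$ passes through a general point of $X$, and the general point of $X$ is a general choice of $P$, so the locus of trisecants is dense in the secant variety's incidence correspondence, hence — being closed there — is everything; this is the routine part.)

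For the second part, I want to show the image $Y = \pi_P(X) \subset \PP^{r-1}$ is nonstrange for general $P$. When $r \geqslant 4$ this is easy: the tangent line to $Y$ at the image of a general smooth point $R \in X$ is $\pi_P(T_R X)$, the image of the tangent line $T_R X \subset \PP^r$ under projection from $P$; if all these tangent lines of $Y$ passed through a common point $\bar S \in \PP^{r-1}$, then every tangent line of $X$ would meet the line $\overline{P\bar S}$ (more precisely the plane spanned by $P$ and the fiber over $\bar S$), and letting $P$ vary over $X$ we would produce infinitely many lines meeting every tangent of $X$; by Lemma~\ref{threadthetangents} that can only happen if $r = 3$ and $X$ lies on a smooth quadric — excluded when $r \geqslant 4$, and excluded by hypothesis when $r = 3$. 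So in all allowed cases $Y$ is nonstrange.

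The main obstacle is the bookkeeping in the $r = 3$ case of the second part: here the image $Y$ is a plane curve, and "nonstrange plane curve" means its tangent lines do not all pass through one point — a genuine constraint, since strange plane curves do exist in characteristic $p$. One must rule out the possibility that projection from a general $P \in X$ creates such a degeneracy. The cleanest route is the contrapositive via Lemma~\ref{threadthetangents} as above: if $Y = \pi_P(X)$ were strange with strange point $\bar S$, set $L_P \colonequals \overline{P S}$ where $S$ is any point of $\PP^3$ over $\bar S$; then every tangent line of $X$ (at a general smooth point) meets the plane $\langle P, \text{fiber over }\bar S\rangle$, and after intersecting with a general hyperplane one extracts a line $L_P$ meeting every tangent of $X$. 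As $P$ varies over the (infinite) curve $X$ the lines $L_P$ are generically distinct, so $\calL$ in Lemma~\ref{threadthetangents} is infinite, forcing $X \subset Q$ for a smooth quadric $Q$ — contradicting the standing hypothesis that $X$ is not contained in a quadric. I would need to check that the $L_P$ are genuinely non-constant in $P$, which follows because a single line cannot meet every tangent of $X$ while also passing through every point $P \in X$ unless $X$ is degenerate or strange. The remaining verifications — that $\pi_P$ restricted to the smooth locus has the expected tangent behavior, that generality of $P$ avoids the finitely many bad fibers — are standard and I would dispatch them briefly.
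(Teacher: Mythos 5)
Your proposal is correct and follows essentially the same route as the paper: birationality via Proposition~\ref{multi-secant}, and nonstrangeness of the image by noting that a strange point of $Y=\pi_P(X)$ pulls back to a line through $P$ meeting every tangent of $X$, which Lemma~\ref{threadthetangents} together with the quadric hypothesis rules out --- the paper merely packages this directly by choosing $P$ off the finitely many lines of $\calL$ rather than by your varying-$P$ contradiction. One small slip worth fixing: the closure of the fiber of $\pi_P$ over the strange point is already a line through $P$ (it is exactly $\pi_P^{-1}(Q)$ in the paper's notation), not a plane, so no auxiliary hyperplane section is needed to extract $L_P$.
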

\begin{proof}
   Consider the set $\calL$ of Lemma~\ref{threadthetangents}. Since $X$ is not contained in a quadric, $\calL$ is finite.
	Proposition~\ref{multi-secant} implies that a general point of $X$ lies on finitely many multisecants. Therefore projection from a general point is generically one-to-one on $K$ points and is thus birational. Let $P \in X(K)$ be a point such that the projection  $\pi_P \colon X \to Y$ from $P$ is birational onto its image $Y$, and such that $P$ does not lie on a line from $\calL$. Suppose that $Y$ is strange and all the tangents pass through $Q \in \PP^{r-1}$. Then the line $\pi_P^{-1}(Q) \subset \PP^r$ is an element of $\calL$ that contains $P$. Thus $Y$ is nonstrange. 

\end{proof}

\begin{proposition}\label{r-transitive}
	If $X \subset \PP^r$, $r \geqslant 2$ is an integral nondegenerate nonstrange curve, then $G$ is $r$-transitive.
\end{proposition}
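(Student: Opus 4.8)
The plan is to argue by induction on $r$, the case $r = 2$ being exactly Lemma~\ref{two-trans}. For the inductive step one realizes a point stabilizer in $G$ as a group containing the sectional monodromy group of a projection of $X$ into $\PP^{r-1}$, and then invokes the elementary fact that a transitive permutation group whose point stabilizer acts $(r-1)$-transitively on the remaining points is $r$-transitive.

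So suppose $r \geqslant 3$ and the proposition holds in $\PP^{r-1}$. Fix a general point $P \in X(K)$ and let $\rho_P \colon \PP^r \dashrightarrow \PP^{r-1}$ be the projection away from $P$. As in the proof of Lemma~\ref{project}, Proposition~\ref{multi-secant} shows that a general point of $X$ lies on only finitely many multisecants --- and in the case where $X$ lies on a quadric, which Lemma~\ref{project} excludes, the only multisecants through a general point are ruling lines --- so $\rho_P$ restricts to a birational morphism from $X$ onto an integral curve $Y \subset \PP^{r-1}$; since $X$ is nondegenerate and $P$ is a smooth point, $Y$ is nondegenerate of degree $d - 1$ (note $d-1 \geqslant 2$, as a nondegenerate curve in $\PP^r$ has degree at least $r$). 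Next one compares sectional coverings. Let $\Lambda_P \subset (\PP^r)^*$ be the hyperplane parametrizing the hyperplanes through $P$; projection from $P$ identifies $\Lambda_P$ with $(\PP^{r-1})^*$. Inside the étale locus $U$ of $\pi_X$, the set $\Lambda_P \cap U$ is a nonempty closed subset of $U$ (a general hyperplane through a general smooth point of $X$ lies in $U$, by Bertini applied to the separable map $\rho_P|_X$), and $\pi_X^{-1}(\Lambda_P \cap U) \to \Lambda_P \cap U$ is finite étale of degree $d$ with the section $H \mapsto (P, H)$. Hence this covering splits as that section together with a degree $d - 1$ covering, and by the discussion after Lemma~\ref{irreducible} the monodromy group of $\pi_X^{-1}(\Lambda_P \cap U) \to \Lambda_P \cap U$ --- equivalently, that of the degree $d-1$ factor --- is a subgroup of $G$ fixing $P$, hence a subgroup of the stabilizer $G_P$. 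Using the birational map $\rho_P|_X$ one checks that this degree $d - 1$ factor is $(\PP^{r-1})^*$-birational to the sectional covering of $Y$, and since the monodromy group is unchanged under shrinking the base to a dense open inside the étale locus, its monodromy group is the sectional monodromy group $G_Y$ of $Y$. Thus $G_Y \subseteq G_P$, acting on the $d - 1$ points of $(H \cap X) \setminus \{P\}$ for a general hyperplane $H$ through $P$.

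It remains to bound the transitivity of $G_Y$. If $r \geqslant 4$, then Lemma~\ref{project} guarantees that $Y$ is nonstrange, so $G_Y$ is $(r-1)$-transitive by the inductive hypothesis. If $r = 3$, then $Y \subset \PP^2$ is an integral nondegenerate curve, so $G_Y$ is $2$-transitive directly by Lemma~\ref{two-trans}; here no control over the strangeness of $Y$ is needed, which is precisely what lets the argument survive the quadric case, since the projection of a curve lying on a quadric need not be nonstrange. In either case $G_P$ acts $(r-1)$-transitively on $(H \cap X) \setminus \{P\}$, and since $G$ is transitive by Lemma~\ref{irreducible}, it follows that $G$ is $r$-transitive.

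The step I expect to be the main obstacle is the identification of the degree $d - 1$ factor of the covering over $\Lambda_P$ with the sectional covering of $Y$: one must check that over the étale locus the covering genuinely splits off the $P$-section, and that the remaining factor agrees birationally over $(\PP^{r-1})^*$ with the sectional covering of $Y$. The only other delicate point is the geometric input that $\rho_P|_X$ is birational onto a nondegenerate curve in the situation excluded from Lemma~\ref{project}, namely when $X$ lies on a quadric; this is handled by the remark about ruling lines above.
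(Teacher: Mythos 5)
Your proof is correct, and its skeleton is the paper's: induction on $r$ with base case Lemma~\ref{two-trans}, and the inductive step realized by projecting from a general point $P\in X$, identifying the covering over the hyperplanes through $P$ (minus the tautological section at $P$) with the sectional covering of the image $Y$, and invoking the standard fact about stabilizers of a transitive group. Where you genuinely diverge is the exceptional case $r=3$ with $X$ on a quadric, which Lemma~\ref{project} excludes. The paper handles it by a separate construction: a secant line $\ell$ through two points of a generic plane section meets $X$ (which sits on the quadric) in exactly two points, and restricting $\pi_X$ to the pencil $\ell^*$ produces a subgroup fixing two points and transitive on the rest, upgrading the $2$-transitivity of Lemma~\ref{two-trans} to $3$-transitivity. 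You instead push the projection argument through the quadric case, noting (a) all multisecants of $X$ lie on the quadric, so only finitely many pass through a general point and the inner projection is still birational onto a nondegenerate plane curve $Y$, and (b) for $r=3$ the induction only needs $2$-transitivity of $G_Y$, which Lemma~\ref{two-trans} gives with no nonstrangeness hypothesis on $Y$ (and indeed $Y$ may be strange here, since the ruling lines prevent the conclusion of Lemma~\ref{project}). Your route buys a uniform inductive step with no exceptional-case geometry; the paper's buys freedom from re-verifying birationality of inner projections for curves on quadrics. One point you should make explicit if you write this up: ``generically one-to-one on $K$-points'' only controls the separable degree of $\rho_P|_X$, so you should add the remark that nonstrangeness of $X$ forces the projection from any point to be separable (otherwise the tangent lines at general points would all pass through $P$); this is the same implicit step as in the paper's Lemma~\ref{project}, so it is a matter of polish, not a gap. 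Also, ``ruling lines'' should read ``lines of the quadric'' to cover the case of a quadric cone, which your argument handles equally well.
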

\begin{proof}
	We prove the statement by induction on $r$. The case $r=2$ is Lemma~\ref{two-trans}. Assume $r>2$. Assume first that it is not the case that $r=3$ and $X$ lies on a quadric. Choose a point $P \in X(K)$ such that the conclusions of Lemma~\ref{project} hold. Let $Y \subset \PP^{r-1}$ be the projection of $X$ from $P$. Consider the hyperplane $H \in (\PP^r)^*$ corresponding to the family of hyperplanes in $\PP^r$ passing through $P$.  Let $Z$ denote $\pi^{-1}(H)$. The scheme $Z$ has an irreducible component $Z'=P \times H \subset X \times (\PP^r)^*$. The covering $Z \setminus Z' \to H$ is birational to the sectional covering of $Y$, since the preimages of hyperplanes  in $\PP^{r-1}$ are hyperplanes in $\PP^r$ containing $P$. By the induction hypothesis the monodromy group of the covering $Z \setminus Z' \to H$ is $r-1$-transitive. The covering $Z' \to H$ is an isomorphism. Therefore the monodromy group of $Z \to H$ acts by fixing one point and acting $(r-1)$-transitively on the rest. Since $G$ acts transitively  and contains a subgroup acting $(r-1)$-transitively on all but one (fixed) point, $G$ is $r$-transitive.
	
	We have to show that when $r=3$ and $X$ lies on a quadric $Q$ the group is $3$-transitive. Consider a generic plane $H$. We can assume that both $H \cap X$ and $H \cap Q$ are smooth.   Take two points of $H \cap X$ and a line $\ell$ through them. The line $\ell$ contains exactly two points $P_1, P_2$ of the curve. Consider the line $\ell^* \in (\PP^3)^*$ corresponding to the hyperplanes containing $\ell$. The restriction of the sectional covering of $X$ to $\ell^*$ is generically \'{e}tale since $H \cap X$ is smooth. Let $Y \subset X \times \ell^*$ denote the preimage of $\ell$ under $\pi_X$. The projection of $Y$ to $X$ is an isomorphism over $X\setminus \{P_1, P_2\}$. The preimages of $P_1$, $P_2$ in $Y$ are each isomorphic to $\ell^*$. Therefore $Y$ decomposes into a union of irreducible components $Y=Y_1 \bigcup Y_2 \bigcup Y_3$, where $Y_1$ is isomorphic to $X$ and $Y_2, Y_3$ are isomorphic to $\ell^*$. Hence the monodromy group contains a subgroup that acts by fixing two points and acting transitively on the rest. Therefore $G$ is triply transitive.
\end{proof}

Multiple transitivity is a very strong condition on the group. For example, the only $5$-transitive groups are $A_n$ and $S_n$ in their natural actions and the Mathieu groups $M_{12}$ and $M_{24}$ acting on $12$ and $24$ points respectively. The classification of $2$-transitive groups is known; see, for example, \cite{Dixon-Mortimer1996}*{\S 7.7} for the statement.

\begin{remark}
	It is common to state classification results on permutation groups without specifying the action (or by referring to the ``natural'' action). This is relatively harmless except for the cases of $M_{11}$, which has two multiply-transitive actions on $11$ and $12$ points, and of $M_{12}$, which has two multiply transitive actions on $12$ points exchanged by an outer automorphism.
\end{remark}

The following proposition due to Rathmann describes the behavior of the sectional monodromy group with respect to projections from points outside of the curve.

\begin{proposition}[\cite{Rathmann1987}*{Proposition~1.10.}]\label{G-under-proj}
Let $X \subset \PP^r$, $r \geqslant 3$ be a projective nondegenerate integral curve. Let $Y$ be the projection of $X$ from a general point of $\PP^r$. Then the sectional monodromy groups of $X$ and $Y$ are isomorphic.	
\end{proposition}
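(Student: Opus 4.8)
The plan is to realise the sectional covering of $Y$, up to a harmless birational modification, as the restriction of the sectional covering of $X$ over a general hyperplane of $(\PP^r)^*$, and then to show that restricting a generically \'{e}tale covering to a general hyperplane section does not change its monodromy group.

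First I would fix a sufficiently general point $O \in \PP^r$. Because $r \geq 3$, a general point of $\PP^r$ lies off $X$ and on only finitely many secant lines of $X$ (the classical finiteness of apparent double points; this is a variant of the argument in Lemma~\ref{project}), so the linear projection $\pi_O$ away from $O$ restricts to a finite morphism $X \to Y \colonequals \pi_O(X) \subseteq \PP^{r-1}$ that is birational onto its image; here $Y$ is again integral and nondegenerate, and $\deg Y = d$. Pulling back hyperplanes along $\pi_O$ identifies $(\PP^{r-1})^*$ with the hyperplane $O^* \colonequals \{\, H \in (\PP^r)^* : O \in H \,\}$ of $(\PP^r)^*$, and under the canonical identification $((\PP^r)^*)^* = \PP^r$ the hyperplane $O^*$ corresponds to the point $O$; thus ``$O$ general in $\PP^r$'' is the same as ``$O^*$ general in $(\PP^r)^*$''.

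Next I would compare $\pi_X \colon I_X \to (\PP^r)^*$ restricted over $O^*$ with $\pi_Y \colon I_Y \to (\PP^{r-1})^*$. For $H \in O^*$, writing $\widetilde{H} \subseteq \PP^r$ for the corresponding hyperplane through $O$, one has $Y \cap H = \pi_O(X \cap \widetilde{H})$ since $O \notin X$. Choosing a dense open $V \subseteq Y$ over which $\pi_O$ restricts to an isomorphism $\pi_O^{-1}(V) \isomto V$, the assignment $(P,H) \mapsto (\pi_O(P),H)$ gives an isomorphism of these two coverings over the locus of $H \in O^*$ for which $X \cap \widetilde{H} \subseteq \pi_O^{-1}(V)$ and over which both coverings are \'{e}tale; this locus is dense in $O^*$, because $\pi_O^{-1}(Y \setminus V)$ is a finite subset of $X$ each of whose points imposes a proper linear condition on $O^*$. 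Since the monodromy group of a generically \'{e}tale finite covering may be computed over any dense open of its \'{e}tale locus (a dense open immersion of smooth varieties induces a surjection of \'{e}tale fundamental groups, hence does not shrink the image in the symmetric group), this identifies the monodromy group of $\pi_X|_{O^*}$, as a permutation group on $d$ letters, with the sectional monodromy group $G_Y$ of $Y$.

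Finally I would show that the monodromy group of $\pi_X|_{O^*}$ coincides with the full sectional monodromy group $G$ of $X$. Let $U_0 \subseteq (\PP^r)^*$ be the dense open over which the degree-$d$ covering $\pi_X$ is \'{e}tale (Lemma~\ref{irreducible}), so that $O^* \cap U_0$ is dense in $O^*$, and let $\widetilde{E} \to U_0$ be the connected Galois closure of $\pi_X^{-1}(U_0) \to U_0$: this is a connected finite \'{e}tale cover of the smooth variety $U_0$ with Galois group $G$, hence $\widetilde{E}$ is irreducible. The composite $\widetilde{E} \to U_0 \injects (\PP^r)^*$ has image of dimension $r \geq 3 \geq 2$, so by the characteristic-free form of Bertini's irreducibility theorem (\cite{Jouanolou1983}*{Th\'eor\`eme~6.3}) the preimage of a general hyperplane $O^*$ is irreducible, hence connected; therefore $\widetilde{E}|_{O^* \cap U_0}$ is the connected Galois closure of $\pi_X^{-1}(O^* \cap U_0) \to O^* \cap U_0$ and still has Galois group $G$, which means the monodromy group of $\pi_X|_{O^*}$ is $G$. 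Combined with the previous paragraph this gives $G \isom G_Y$ (indeed an isomorphism of permutation groups), as claimed. I expect the main obstacle to be this last step — invoking Bertini irreducibility in positive characteristic and arranging that the genericity conditions on $O$ needed there are compatible with those used to make $\pi_O$ birational — together with the bookkeeping in the middle paragraph of which dense open the two sectional coverings are identified over.
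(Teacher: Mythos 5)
Your argument is correct, and there is nothing in the paper to compare it against in detail: the paper does not prove this proposition, it simply cites \cite{Rathmann1987}*{Proposition~1.10}, so your write-up is a self-contained reconstruction of the expected argument (restrict the sectional covering of $X$ to the hyperplane $O^*\subset(\PP^r)^*$ of hyperplanes through the centre of projection, identify that restriction birationally with $\pi_Y$, and show the monodromy does not drop). Both halves check out. The identification with $\pi_Y$ needs only that $O\notin X$, that $\pi_O|_X$ is birational onto its image (a general point of $\PP^r$, $r\geqslant 3$, lies on no tangent line and on only finitely many secants, in any characteristic), that $O^*$ is not contained in the non-\'etale locus of $\pi_X$, and the standard fact that passing to a dense open of the \'etale locus of a generically \'etale cover of a normal base does not change the monodromy image. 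The step you flag as the main obstacle is in fact unproblematic: Bertini irreducibility for preimages of general hyperplanes under a morphism whose image has dimension at least $2$ (Jouanolou, Th\'eor\`eme~6.3) is characteristic-free --- it is the smoothness/separability forms of Bertini that fail in characteristic $p$, not irreducibility --- and it applies here because $\widetilde{E}$ is irreducible (a connected finite \'etale cover of a smooth, hence normal, connected base) and $\dim\overline{f(\widetilde{E})}=r\geqslant 3$. Connectedness of $\widetilde{E}|_{O^*\cap U_0}$ is indeed equivalent to the image of $\pi_1^{\et}(O^*\cap U_0)$ in $G$ being all of $G$, since that image acts on a fiber of the Galois closure by translations. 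Finally, all the genericity requirements on $O$ (off $X$, off the tangent and secant loci as above, $O^*$ meeting $U_0$, $O^*$ in the Bertini-good dense open of the bidual) are finitely many dense open conditions on $O\in\PP^r$, so they hold simultaneously, and the fiberwise identification via $\pi_O$ makes the resulting isomorphism $G_X\isom G_Y$ one of permutation groups, as required.
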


\section{Tangents and inertia groups}\label{tangents}

We will compute sectional monodromy groups by restricting the sectional monodromy covering $\pi_X$ to a line in $(\PP^r)^*$ and computing inertia groups. A natural choice of a point in $(\PP^r)^*$ above which the restricted covering ramifies is a point corresponding to a hyperplane containing a tangent line of $X$. Tangent hyperplanes can behave pathologically in characteristic $p$ (even for nonstrange curves).

For the statements and precise definitions in this paragraph see \cite{Hefez1989} and references therein. Let $X \subset \PP^r$ be an integral projective curve that is not a line. The dual curve $X^* \subset \mathrm{Gr}(2,r+1)$ is defined to be the closure of the set of tangent lines to $X$. The Gauss map is the rational map $\upgamma: X \dashrightarrow X^*$ that sends a point to the tangent line at that point. In characteristic $0$ the Gauss map is birational. In characteristic $p$ both the separable degree $s$ and the inseparable degree $q$ of $\upgamma$ can be greater than one. A general tangent line of $X$ has $s$ points of tangency with multiplicity $\max(q,2)$ at each one. It is a theorem of Kaji that Gauss maps can be arbitrary bad in the sense that any finite inseparable extension of function fields can be realized as the generic fiber of a Gauss map; see \cite{Kaji1989}*{Corollary~3.4}. 

We calculate inertia groups by computing Galois groups of certain extensions of $\Klaur{t}$ given by adjoining to $\Klaur{t}$ zeros of convergent power series of a specific shape. Our main tool is the theory of Newton polygons (see \cite{Koblitz1984}*{IV.4}) that determines the valuations of the roots of a power series from the valuations of its coefficients. We combine the information obtained from Newton polygons with basic ramification theory to determine the local Galois groups; for an introduction to ramification theory see \cite{SerreLocalFields1979}*{Chapters\ I\ndash III}.

We will use the following group theoretic result due to Jones \cite{Jones2014}. Recall that a permutation group $G$ acting on $\Omega$ is primitive if it preserves no nontrivial partition of $\Omega$.

\begin{theorem}[Jones]\label{Jones}
	Let $G$ be a primitive permutation group of finite degree $n$, not containing $A_n$. Suppose that $G$ contains a cycle fixing exactly $k$ points, where $0 \leqslant k \leqslant n-2$. Then one of the following holds:
	
	\begin{enumerate}
		\item\label{cycle0} $k=0$ and either
		\begin{enumerate}
			\item $C_p \subset G \subset \AGL_1(p)$ with $n=p$ prime, or
			\item $\PGL_d(q) \subset G \subset \PGaL_d(q)$ with $n=(q^d-1)/(q-1)$ and $d \geqslant 2$ for some prime power $q$, or
			\item $G=\PSL_2(11), M_{11}$ or $M_{23}$ with $n=11, 11, 23$ respectively.
		\end{enumerate}
	
		\item $k=1$ and either
	\begin{enumerate}
		\item $\AGL_d(q) \subset G \subset \AGaL_d(q)$ with $n=q^d$ and $d \geqslant 1$ for some prime power $q$, or
		\item $G=\PSL_2(p)$ or $G=\PGL_2(p)$ with $n=p+1$ for some prime $p \geqslant 5$, or
		\item $G=M_{11}, M_{12}$ or $M_{24}$ with $n=12, 12, 24$ respectively. respectively.
	\end{enumerate}
	\item $k=2$ and $\PGL_2(q) \subset G \subset \PGaL_2(q)$ with $n=q+1$ for some prime power $q$.
	\end{enumerate}
\end{theorem}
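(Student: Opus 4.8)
This is a theorem of Jones and, like Rathmann's theorem, it rests on the classification of finite simple groups; I will only sketch how such an argument is organized. It is convenient to restate the hypothesis: $G$ contains an element $\sigma$ of cycle type $(\ell, 1^{\,n-\ell})$ with $\ell = n-k \ge 2$, equivalently $\langle \sigma \rangle$ has a single regular orbit of size $\ell$ and $k$ fixed points. I want to prove two things: (A) that $k \le 2$ unless $A_n \le G$, and (B) the exact list of possibilities when $k \in \{0, 1, 2\}$.

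For (A) with $\ell$ prime this is classical: by a theorem of Jordan (see Wielandt, \emph{Finite Permutation Groups}), a primitive group of degree $n$ containing a cycle of prime length $\ell \le n-3$ already contains $A_n$. When $\ell$ is composite there is no elementary reduction, since a proper power of an $\ell$-cycle decomposes into several cycles rather than a single one; and even for $\ell$ prime with $k \le 2$ one must still identify $G$. So the plan from here on is to apply the O'Nan--Scott theorem to $G$ and work through the resulting types. In the product-action, diagonal, and twisted-wreath types, the imprimitivity structure imposed by a socle with several simple factors forces every element to break up into many orbits, so no element can be a single cycle with at most $n-2$ fixed points; this leaves only the affine and almost simple cases.

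In the affine case $G \le \AGL_d(p)$ with $n = p^d$: writing $\sigma = tg$ with $t$ a translation and $g \in \GL_d(p)$, if $g = 1$ then $\sigma$ is a product of $p^{d-1}$ cycles of length $p$, a single cycle only for $d = 1$, which gives $n = p$ and $C_p \le G \le \AGL_1(p)$ (case (1a), $k=0$); if $g \ne 1$, any affine map with two fixed points fixes a whole coset of a nonzero subspace, hence at least $p$ points, and then cannot be a single cycle on the complement, so $\sigma$ fixes exactly one point and requiring the rest to be a single $(p^d-1)$-cycle forces $g$ to be a Singer cycle, giving $\AGL_d(q) \le G \le \AGaL_d(q)$ (case (2a), $k=1$). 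In the almost simple case, with $T \trianglelefteq G \le \Aut(T)$, I would run through the classification of finite simple groups, using for each primitive action the known point stabilizers and conjugacy-class data: an alternating socle in a non-natural action yields no such cycle (the natural action is excluded), $\PSL_d(q)$ on $\PP^{d-1}(\F_q)$ contributes a Singer $n$-cycle and, for $d = 2$, the split and nonsplit torus elements realizing the $k = 2$, $1$, $0$ entries with $n = q+1$, and a finite check over the sporadic groups turns up only $\PSL_2(11)$ (in degrees $11$ and $12$) and the Mathieu groups $M_{11}, M_{12}, M_{23}, M_{24}$. Every remaining simple group of Lie type, and $\PSL_d$ in its non-projective actions, has to be excluded by showing it contains no element acting as one long cycle with a bounded number of fixed points. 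Finally I would check that each group on the list really does contain the asserted cycle, and that the brackets $\PGL_d(q) \le G \le \PGaL_d(q)$ and $\AGL_d(q) \le G \le \AGaL_d(q)$ are precisely the groups for which this happens.

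The main obstacle is the almost simple case, and within it the elimination of the generic classical and exceptional groups of Lie type: because a general (composite-length) cycle cannot be reduced to a shorter one the way a prime-length cycle can in Jordan's argument, one needs uniform control of the cycle structures of elements across \emph{all} primitive actions of these groups --- which is exactly where the full strength of the classification, Aschbacher's subgroup theorem, and detailed information on conjugacy classes of the classical groups are needed.
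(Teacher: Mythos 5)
The paper does not prove this statement at all: it is quoted from Jones \cite{Jones2014}, and the surrounding text (and the ``Warning'' in the introduction) explicitly treats it as an external, CFSG-dependent input. So the comparison has to be with Jones's published argument, and measured against that your proposal has a genuine gap: it is an outline that stops exactly where the difficulty begins. The exclusion of the product-action, diagonal and twisted-wreath O'Nan--Scott types is asserted in one sentence (``the imprimitivity structure \dots forces every element to break up into many orbits''), but that is not a structural triviality --- it is a claim about cycle lengths of elements of wreath-type groups that has to be proved --- and the almost simple case, which you yourself identify as the main obstacle, is reduced to ``run through the classification \dots and check'', i.e.\ to the content of the theorem. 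There are also small symptomatic slips: for instance, an alternating socle in a non-natural action \emph{can} contain such a cycle ($A_5$ on $6$ points has a $5$-cycle fixing one point; it survives in the list only because it reappears as $\PSL_2(5)$ in case (2b)), so the blanket dismissal of that subcase is false as stated.

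It is also worth pointing out that the actual proof is organized more economically than your plan, and in a way that makes the hard part finite. Instead of attacking all primitive actions of all almost simple groups with Aschbacher-style subgroup machinery, one first forces double transitivity by classical, pre-CFSG results: for $k\geqslant 1$ the cyclic group generated by the cycle fixes $k$ points and is transitive on the rest, so Jordan's theorem on primitive groups with a Jordan set applies; for $k=0$ one uses Burnside's theorem when $n$ is prime (which either gives case (1a) or $2$-transitivity) and Schur's theorem on primitive groups with a regular cyclic subgroup of composite order. After that, everything is reduced to the CFSG-derived list of finite $2$-transitive groups (affine or almost simple socle), and the theorem becomes a group-by-group verification of which entries contain a cycle with $0$, $1$ or $2$ fixed points --- including the converse checks that $\PGL_d(q)\subset G\subset\PGaL_d(q)$ and $\AGL_d(q)\subset G\subset\AGaL_d(q)$ really do occur. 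Your affine computation and the Jordan prime-length remark are fine as far as they go, but to turn the sketch into a proof you should follow this reduction and then actually carry out the cycle-structure check on the explicit list, rather than leaving the Lie-type analysis open-ended.
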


The inclusions in Theorem~\ref{Jones} are inclusions of permutation groups. We recall the definitions of some of the groups appearing in Theorem~\ref{Jones}. The group $\AGL_d(q)$ is the affine general linear group acting on $\Aff^d(\F_q)$; it is an extension of $\GL_d(\F_q)$ by the group of translations $(\F_q)^d$. The group $\AGaL_d(q)$ is generated by $\AGL_d(q)$ and the Frobenius automorphism. The group $\PGL_d(q)=\GL_{d}(q)/\F_q^{\times}$ is the projective general linear group acting on the projective space $\PP^{d-1}(\F_q)$.  The projective semilinear group $\PGaL_d(q)$ is generated by $\PGL_d(q)$ and the Frobenius automorphism. 

In Lemma~\ref{inertia} we use inertia groups to construct special subgroups of the sectional monodromy group; together Proposition~\ref{r-transitive} and Lemma~\ref{inertia} will reduce most of the computation of sectional monodromy groups to pure group theory. First we need to prove the following lemma.

\begin{lemma}\label{AGL1}
	Let $K$ be an algebraically closed field of characteristic $p$ and let $\left(F,v\right)$ denote the valued field $\Klaur{t}$. Let $\calO$ denote its ring of integers $\calO = \Kpower{t}$. Let $P \in \calO[\![ x ]\!]$ be an arbitrary power series and let $q$ be a power of $p$. Consider a power series $Q=x^{q+2}P(x)+a_{q+1}x^{q+1}+a_qx^q+a_1x+a_0 \in \calO[\![ x ]\!]$, where $v(a_0)=v(a_1)=1$, $v(a_q)=0$. Assume  that $v(a_1a_q-a_0a_{q+1})=1$. Assume that all roots of $Q$ in the open unit disk  $D\colonequals \{x \in \overline{F}: \ v(x)>0\}$ lie in $F^{\mathrm{sep}}$, so $Q$ has exactly $q$ roots in $D$. The Galois action on these roots defines a homomorphism $\varphi \colon \Gal (\overline{F}/F) \to S_q$. Let $H$ denote the image of $\varphi$. Then $H$ equals $\AGL_1(q)$ embedded via its action on the affine line.
\end{lemma}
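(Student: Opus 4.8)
The plan is to reduce the computation, via Weierstrass preparation, Newton polygons and Krasner's lemma, to the Galois group of a single explicit trinomial, which can be determined by hand.

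\emph{Step 1: pass to a Weierstrass polynomial.} Reducing modulo $\mm=(t)$ gives $\overline Q=\overline{a_q}x^q+\overline{a_{q+1}}x^{q+1}+\cdots\in K[\![x]\!]$ with $\overline{a_q}\ne0$, so the Weierstrass preparation theorem yields a factorization $Q=u\,W$ with $u\in\calO[\![x]\!]^{\times}$ and $W=x^q+w_{q-1}x^{q-1}+\cdots+w_0$ a distinguished polynomial, i.e.\ $w_i\in\mm$ for all $i$. A unit of $\calO[\![x]\!]$ has no zero in $D$, so the roots of $Q$ in $D$ coincide with the roots of $W$, and $\varphi$ is the Galois action on the latter. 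Writing $u=u_0+u_1x+\cdots$ and comparing the coefficients of $x^{0},x^{1},x^{q},x^{q+1}$ in $Q=uW$ one gets $u_0\equiv a_q$ and $u_1\equiv a_{q+1}\pmod{\mm}$, hence $v(w_0)=v(a_0)=1$ and $w_1\equiv(a_1a_q-a_0a_{q+1})/a_q^{2}\pmod{\mm^{2}}$; the hypothesis $v(a_1a_q-a_0a_{q+1})=1$ is exactly what forces $v(w_1)=1$. Since moreover $v(w_i)\geq1$ for $1<i<q$, the Newton polygon of $W$ is the single segment from $(0,1)$ to $(q,0)$, of slope $-1/q$ with denominator $q$ in lowest terms; therefore $W$ is irreducible over $F$, all its roots lie in $D$ with valuation $1/q$, and $F(\beta)/F$ is totally ramified of degree $q$ for each root $\beta$. (As $W$ is irreducible and $W'$ has constant term $w_1\ne0$, $W$ is separable, which re-derives the separability hypothesis.)

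\emph{Step 2: the model trinomial.} I would put $f\colonequals x^q+w_1x+w_0$. It is irreducible of degree $q$ over $F$ (same Newton polygon) and separable ($f'=w_1\ne0$). Because $q$ is a power of $p$, $y\mapsto y^q$ is additive, so $f(\alpha)=f(\alpha')$ forces $(\alpha-\alpha')^{q}+w_1(\alpha-\alpha')=0$: any two roots of $f$ differ by an element of $V\colonequals\{0\}\cup\{\gamma:\gamma^{q-1}=-w_1\}$, and conversely $\alpha+\gamma$ is a root whenever $\alpha$ is and $\gamma\in V$. Hence the roots of $f$ form a coset $\tilde\alpha+V$; and since $V\setminus\{0\}=\gamma_0\F_q^{\times}$ for any $\gamma_0$ with $\gamma_0^{q-1}=-w_1$ (using $\mu_{q-1}(\overline F)=\F_q^{\times}\subseteq F$), $V=\F_q\gamma_0$ is a one-dimensional $\F_q$-vector space. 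Every $\sigma\in\Gal(\overline F/F)$ permutes the roots of $x^q+w_1x\in F[x]$, hence preserves $V$, acts $\F_q$-linearly on it (it fixes $\F_q\subseteq K$), and sends $\tilde\alpha$ into $\tilde\alpha+V$; identifying $\tilde\alpha+V$ with $\Aff^1(\F_q)$ via $\gamma_0$, this shows the image of $\Gal(\overline F/F)$ in $\Sym(\text{roots of }f)$ — which is $\Gal(L_f/F)$, $L_f$ the splitting field of $f$ — lies in $\AGL_1(q)$. On the other hand $x^{q-1}+w_1$ is irreducible over $F$ (Newton polygon of slope $-1/(q-1)$, denominator $q-1$), so $[F(\gamma_0):F]=q-1$; thus $L_f$ contains subfields $F(\tilde\alpha)$ and $F(\gamma_0)$ of the coprime degrees $q$ and $q-1$, so $|\AGL_1(q)|=q(q-1)$ divides $[L_f:F]$. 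Together with $\Gal(L_f/F)\hookrightarrow\AGL_1(q)$ this gives $\Gal(L_f/F)=\AGL_1(q)$ acting naturally.

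\emph{Step 3: transfer to $W$ by Krasner's lemma.} If $q=2$ then $W=f$ and we are done; assume $q\geq3$. For each root $\tilde\alpha$ of $f$ we have $W(\tilde\alpha)=\sum_{i=2}^{q-1}w_i\tilde\alpha^{\,i}$, of valuation $\geq1+2/q$; writing $W(\tilde\alpha)=\prod_{\beta}(\tilde\alpha-\beta)$ over the $q$ roots $\beta$ of $W$, and noting that distinct roots of $f$ are at mutual valuation $1/(q-1)$ while $1+2/q>q/(q-1)$ for $q\geq3$, some root $\beta(\tilde\alpha)$ of $W$ satisfies $v(\tilde\alpha-\beta(\tilde\alpha))>1/(q-1)$. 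The assignment $\tilde\alpha\mapsto\beta(\tilde\alpha)$ is injective — two roots of $f$ with a common image would be at mutual valuation $>1/(q-1)$ — hence a bijection onto the roots of $W$; this in turn shows distinct roots of $W$ are also at mutual valuation exactly $1/(q-1)$, so $\beta(\tilde\alpha)$ is the \emph{unique} root of $W$ within valuation-distance $>1/(q-1)$ of $\tilde\alpha$, and — because $v$ is $\Gal(\overline F/F)$-invariant — the bijection is $\Gal(\overline F/F)$-equivariant. Krasner's lemma, applied to $\tilde\alpha$ (whose $F$-conjugates, the other roots of $f$, all lie farther from it than $\beta(\tilde\alpha)$), gives $F(\tilde\alpha)\subseteq F(\beta(\tilde\alpha))$, and equality of degrees yields $F(\tilde\alpha)=F(\beta(\tilde\alpha))$. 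Hence $f$ and $W$ have the same splitting field $L$, and the equivariant bijection identifies the $\Gal(L/F)$-sets of roots; therefore $H$, the image of $\varphi$, i.e.\ $\Gal(L/F)$ on the roots of $W$, is isomorphic as a permutation group to $\Gal(L_f/F)=\AGL_1(q)$ acting on $\Aff^1(\F_q)$.

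\emph{Expected main difficulty.} The technical heart is Step 3: making precise that the higher-degree part of $W$ (equivalently of $Q$) does not affect the monodromy. The comparison with $f$ works only because the roots of $f$ sit at mutual valuation exactly $1/(q-1)$ — small enough for the estimate $v(W(\tilde\alpha))\geq1+2/q$ to exceed $q/(q-1)$ — and that value is $1/(q-1)$ precisely because $v(w_1)=1$, i.e.\ precisely because of the hypothesis $v(a_1a_q-a_0a_{q+1})=1$ (which also supplies the irreducibility of $x^{q-1}+w_1$ and hence the full $\F_q^{\times}$ worth of ``rotations''). Arranging all of the valuation inequalities to come out strictly, uniformly in $q$, together with the bookkeeping needed to extract $v(w_1)=1$ from the Weierstrass factorization in Step 1, is where the care is needed.
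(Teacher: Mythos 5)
Your proof is correct, but it follows a genuinely different route from the paper's. The paper works directly with the power series $Q$: the Newton polygon of $Q$ gives $q$ roots of valuation $1/q$ and transitivity, the Newton polygon of $Q(x+\alpha)/x$ over $F(\alpha)$ (this is exactly where the hypothesis $v(a_1a_q-a_0a_{q+1})=1$ enters, to pin the constant term at valuation $1$) shows the point stabilizer is cyclic of order $q-1$, and then a short argument gives primitivity of $H$; solvability of $H$ (it is an inertia group over $\Klaur{t}$) plus Theorem~\ref{Jones} then forces $\AGL_1(q)\subset H\subset\AGaL_1(q)$, and cyclicity of the stabilizer yields $H=\AGL_1(q)$. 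You instead use Weierstrass preparation to replace $Q$ by a distinguished polynomial $W$ with $v(w_0)=v(w_1)=1$ (your bookkeeping extracting $v(w_1)=1$ from $v(a_1a_q-a_0a_{q+1})=1$ is right), compare $W$ with the model trinomial $f=x^q+w_1x+w_0$ by the valuation estimate $v(W(\tilde\alpha))\geqslant 1+2/q > q/(q-1)$ and Krasner's lemma, and compute $\Gal$ of $f$ by hand via the additive structure: its roots form a coset of the $\F_q$-line of roots of $x^q+w_1x$, giving the embedding into $\AGL_1(q)$, while the coprime subfield degrees $q$ and $q-1$ give surjectivity. The trade-off: your argument makes the affine structure on the roots completely explicit and removes the dependence of this lemma on Jones's theorem (and hence on the classification of finite simple groups), at the cost of the Weierstrass and Krasner bookkeeping; the paper's argument is shorter given that Theorem~\ref{Jones} is already invoked elsewhere, and its Newton-polygon computation of the stabilizer also produces directly the $(q-1)$-cycle and the ramification data reused in Lemma~\ref{inertia} (though these can be read off from your description as well, e.g.\ the tame quotient of order $q-1$ corresponds to the $\F_q^{\times}$-part of the affine action).
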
 

\begin{proof}
	The Newton polygon of $Q$ has a unique segment of positive slope, it connects $(0,1)$ and $(q,0)$. Therefore $Q$ has $q$ roots in the open unit disk each with valuation $1/q$. Let $\alpha$ be a root of $Q$. The field extension $L=F(\alpha)/F$ has ramification index at least $q$; therefore $[L:F]=q$ and $H$ acts transitively. Consider the roots of $Q(x+\alpha)$ over $L$. We have 
	\begin{multline*}
		Q(x+\alpha)=x^q(x+\alpha)^2P(x+\alpha)+ \alpha^q (x+\alpha)^2 P(x+\alpha) + a_{q+1}x^q(x+\alpha) \\
		+ a_{q+1} \alpha^q (x+\alpha) + a_qx^q + a_1x + a_1\alpha + a_q\alpha^q+a_0.
	\end{multline*}
	The constant term must be zero since $\alpha$ is a root of $Q$. The constant term of $Q(x+\alpha)/x$ is 
	\[2\alpha^{q+1}P(\alpha) + \alpha^{q+2}P'(\alpha) + a_{q+1}\alpha^q + a_1.\]
	Assume $v(a_{q+1}\alpha^q + a_1) > 1$. Then $a_{q+1}\alpha^q+a_1 \equiv 0 \pmod{\alpha^{q+1}}$. Reducing the equation $Q(\alpha)=0$ modulo $\alpha^{q+1}$, we conclude that 
	$a_0+a_q\alpha^q \equiv 0 \pmod{\alpha^{q+1}}$. Hence $\alpha^q \equiv -a_0/a_{q} \pmod{\alpha^{q+1}}$ and $a_1-a_0a_{q+1}/a_{q} \equiv a_{q+1}\alpha^q+a_1 \equiv 0 \pmod{\alpha^{q+1}}$. But $v(a_{q+1}a_0/a_q - a_1)=1$, contradiction. Thus $v(a_{q+1}\alpha^q+a_1)=1$, and the constant term of $Q(x+\alpha)/x$ has valuation $1$.
	
	 The power series $Q(x+\alpha)/x=\vcentcolon \sum_k b_k x^k$ has constant term $b_0$ of valuation $1$. Modulo $\alpha^q$ we have $Q(x+\alpha)/x \equiv x^{q-1}(x+\alpha)^2P(x)+a_{q+1}x^{q-1}(x+\alpha)+a_qx^{q-1} \pmod {\alpha^q}$. So for $1 \leqslant k \leqslant q-2$ the valuation of $b_k$ is at least one. The valuation of $b_{q-1}$ is zero. Therefore the Newton polygon of $Q(x+\alpha)/x$ has only one segment of positive slope, it connects $(0,1)$ to $(q-1,0)$. The field $L=\Klaur{\alpha}$ has a unique extension of degree $q-1$, namely $\Klaur{\alpha^{1/(q-1)}}$. We conclude that the stabilizer of a point in  $H$ is a cyclic group of order $(q-1)$. A cycle in an imprimitive group has to either belong to a block or contain the same number of elements in every block. Since $q-1$ and $q$ are relatively prime, a $(q-1)$-cycle belongs to a single block. So the group cannot be imprimitive. Since $F$ is a nonarchimedean local field, $H$ is solvable. Theorem~\ref{Jones} implies that in this case $\AGL_1(q) \subset H \subset \AGaL_1(q)$. The group $\AGaL_1(q)$ is the group of semilinear transformation of the affine line, namely the group generated by $\AGL_1(q)$ and the Frobenius automorphism. Since the stabilizer of a point in $H$ is a cyclic group, $H=\AGL_1(q)$.  

\end{proof}

\begin{remark}
	Lemma \ref{AGL1} can be proved without using Theorem \ref{Jones} with a lengthier but elementary group theory argument. The main Theorems~\ref{intro-sectional}~and~\ref{intro-trinomials} rely on Theorem \ref{Jones} in a more substantial way.
\end{remark}

\begin{lemma}\label{inertia}
	Let $X \subset \PP^2$ be a nondegenerate nonstrange integral plane curve of degree $d$. Assume that the Gauss map of $X$ has separable degree $s$ and inseparable degree $q$. Choose a geometric point $x$ above which the covering $\pi_X$ is \'{e}tale. Denote by $\Omega$ the fiber of $\pi_X$ above $x \in (\PP^2)^*$, so the group $G_X$ acts on $\Omega$ by permutations and $\# \Omega = d$. Then the following hold:
	\begin{enumerate}
		\item If $q=1$, then $G_X$ contains a transposition.
		\item If $q>1$, then there is a decomposition $\Omega=A_1 \sqcup ... \sqcup A_s \sqcup B$, $\#A_i = q, \# B = d-sq$, a subgroup $H \subset G_X$ and a collection of subgroups $H_i \subset G_X$, $i=1,...,s$ such that the following properties hold.
		\begin{itemize}
		\item $H$ fixes every point of $B$ and preserves the decomposition
		$\Omega=A_1 \sqcup ... \sqcup A_s \sqcup B$.
		\item The image of $H$ in the symmetric group $\Sym (A_i)$ equals $\AGL_1(q)$ embedded via its action on the affine line.
		\item There exists an element $h \in H$ that acts on each $A_i$ as a $(q-1)$-cycle (fixing one point).
		\item $H_i$ preserves the decomposition $\Omega=A_1 \sqcup ... \sqcup A_s \sqcup B$, fixes $B \cup A_i$ pointwise, and acts transitively on each $A_j$ for $j \neq i$.  
		\end{itemize}
	\end{enumerate}
\end{lemma}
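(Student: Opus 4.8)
The plan is to produce all the required subgroups of $G_X$ as (images of) inertia groups of the sectional covering restricted to a curve in $(\PP^2)^*$ through a suitable point. Concretely, I would fix a \emph{general} tangent line $T$ to $X$: it is tangent to $X$ at distinct smooth points $P_1,\dots,P_s$ with contact order $e\colonequals\max(q,2)$ at each, it meets $X$ transversally at $d-se$ further smooth points $R_1,\dots,R_{d-se}$, and it avoids the singular locus of $X$; moreover the branch divisor of $\pi_X$ is reduced with a single analytic branch at $[T]$. I then restrict $\pi_X$ to a general line $\ell^\ast\subset(\PP^2)^\ast$ through $[T]$ (equivalently, a general pencil of lines through a general point of $T$); since the branching near $[T]$ is concentrated at $[T]$, the monodromy group of $\pi_X^{-1}(\ell^\ast)\to\ell^\ast$ is a subgroup of $G_X$, and the image $H$ of the decomposition (= inertia) group $D_{[T]}$ in $\Sym(\Omega)$ is a subgroup of $G_X$. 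The fiber of $\pi_X$ over $[T]$ is the divisor $e(P_1+\dots+P_s)+R_1+\dots+R_{d-se}$ on $X$; as one approaches $[T]$ along $\ell^\ast$ the $d$ sheets cluster into $A_i$ (the $e$ sheets limiting to $P_i$) and $B$ (the $d-se$ sheets limiting to the $R_\bullet$). This partition is read off from the completed local rings of $I_X$ at the points of $\pi_X^{-1}([T])$, so it depends only on $[T]$, not on $\ell^\ast$; and $H$ fixes every point of $\pi_X^{-1}([T])$, hence preserves the partition, and acts trivially on $B$ because $\pi_X$ is \'etale at each $R_\bullet$.

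\textbf{The case $q=1$ and the local model for $q>1$.}
If $q=1$ then $e=2$: near $P_i$, $T$ is an ordinary tangent, the two sheets of $A_i$ solve a quadratic in $x$ whose discriminant vanishes simply at $[T]$, so $[T]$ is a node-type branch point and $H|_{A_i}$ is generated by a transposition; hence $G_X$ contains a transposition. If $q>1$, fix $i$ and choose affine coordinates with $P_i=(0,0)$ and $T=\{y=0\}$; then $X$ is cut out near $P_i$ by $y=g(x)$ with $g(x)=c_qx^q+c_{q+1}x^{q+1}+\cdots$ and $c_q\neq 0$ (order $q$ contact). Writing the pencil member as $\{y=\alpha+\beta x\}$ and parametrizing $\ell^\ast$ by $t$ with $\alpha=\alpha(t),\beta=\beta(t)\in\calO\colonequals\Kpower{t}$, the $q$ sheets of $A_i$ are exactly the roots in the open unit disk over $F\colonequals\Klaur{t}$ of
\[
Q(x)\;=\;g(x)-\beta(t)x-\alpha(t)\;=\;x^{q+2}P(x)+c_{q+1}x^{q+1}+c_qx^q-\beta(t)x-\alpha(t).
\]
For a general choice of the direction of $\ell^\ast$ at $[T]$ one has $v(\alpha)=v(\beta)=1$ and $v(\beta(t)c_q-\alpha(t)c_{q+1})=1$, which is a nonempty open condition; since only finitely many directions are excluded at each $P_i$ and $K$ is infinite, one may arrange this simultaneously at all of $P_1,\dots,P_s$. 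Then $Q$ has exactly the shape required by Lemma~\ref{AGL1} (with $a_{q+1}=c_{q+1}$, $a_q=c_q$, $a_1=-\beta$, $a_0=-\alpha$), so Lemma~\ref{AGL1} gives that the image of $D_{[T]}$ in $\Sym(A_i)$ is $\AGL_1(q)$ acting on the affine line. Together with the previous paragraph this yields the first three bulleted properties of $H$.

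\textbf{The $(q-1)$-cycle and the subgroups $H_i$.}
The Galois closure $\widetilde M$ of the fiber over $[T]$ is the compositum of the Galois closures $\widetilde L_i$ of the degree-$q$ extensions $L_i/F$ cut out by the clumps $A_i$; each $\widetilde L_i/F$ has group $\AGL_1(q)$, is totally ramified (the residue field $K$ is algebraically closed), with wild inertia the translation subgroup $\F_q\cong(\Z/p)^k$ and tame quotient $\F_q^\times$ of order exactly $q-1$, and maximal tame subextension $\Klaur{t^{1/(q-1)}}$. Hence the maximal tame subextension of $\widetilde M$ is again $\Klaur{t^{1/(q-1)}}$, so $D_{[T]}$ has cyclic tame quotient of order exactly $q-1$, and by Schur--Zassenhaus $D_{[T]}=P\rtimes C$ with $P$ the wild inertia and $C$ cyclic of order $q-1$. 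A generator $h$ of $C$ has order $q-1$ and maps onto a generator of $\F_q^\times$ in each factor, hence acts on each $A_i$ as a $(q-1)$-cycle fixing one point; this is the element $h\in H$ required. For the $H_i$: $P$ is a $p$-group mapping in the $i$-th factor onto the translation subgroup $\F_q$ of $\AGL_1(q)$, which acts on $A_i$ simply transitively; so $P$ embeds $C$-equivariantly into $\prod_{i=1}^s\F_q$ (with $C$ acting on the $i$-th factor by a scaling character twisted by an exponent $e_i$ coprime to $q-1$) and surjects onto each factor. Each $\F_q$ is an irreducible $\F_p[C]$-module, so \emph{provided the $s$ twisted characters are pairwise non-isomorphic}, a $C$-submodule surjecting onto every summand must be all of $\prod_i\F_q$; then $P\cong\prod_i\F_q$ and one takes $H_i\colonequals\{0\}\times\prod_{j\neq i}\F_q\subseteq P\subseteq H\subseteq G_X$, which fixes $A_i$ pointwise (translation by $0$), fixes $B$ pointwise (wild inertia is trivial on unramified sheets), preserves the partition, and acts on each $A_j$ ($j\neq i$) by all translations, hence transitively.

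\textbf{Main obstacle.}
The delicate point is the very last step: one must show that the wild inertia at $[T]$ is as large as possible, i.e.\ equal to the full product $\prod_i\F_q$ of the local wild inertia groups at $P_1,\dots,P_s$ --- equivalently that the tangency clumps contribute ``independent'' wild ramification, so that the twist-characters $e_i$ lie in pairwise distinct orbits of $\langle p\rangle$ acting on $(\Z/(q-1))^\times$. If two of the $e_i$ coincided, $P$ could a priori be a proper ``diagonal'' submodule and the construction of $H_i$ would break down. I expect this to be handled either by a direct computation of the ramification filtration of $\widetilde M/F$ (pinning down each $e_i$ from the local data $c_q^{(i)},c_{q+1}^{(i)}$ and the chosen pencil), or --- more robustly --- by refining the choice of $\ell^\ast$ and of the local coordinates at each $P_i$ so as to force the $e_i$ apart, again using that only finitely many choices are excluded and $K$ is infinite. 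Carrying out this ramification bookkeeping while keeping all the genericity conditions compatible at $P_1,\dots,P_s$ simultaneously is the technical heart of the proof; the $q=1$ case and the extraction of $H$ and $h$ are comparatively routine once Lemma~\ref{AGL1} is in hand.
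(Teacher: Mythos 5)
Your construction of $H$ and of the element $h$ follows the paper's route (restrict $\pi_X$ to a pencil of lines through a point of a general tangent line, and feed the local equation at each tangency point into Lemma~\ref{AGL1}), and that part is sound. But there are two genuine gaps. First, in the case $q=1$ the inertia element you produce acts as a transposition on \emph{each} clump $A_1,\dots,A_s$ simultaneously, i.e.\ it is a product of $s$ transpositions; your conclusion ``hence $G_X$ contains a transposition'' only follows if $s=1$, which you neither prove nor are entitled to assume. The paper gets an honest transposition by playing two pencils against each other: the pencil through a general point of the tangent line gives $g$ acting as a transposition on every $A_i$ and trivially on $B$, while the pencil through the tangency point $P_1$ gives $g_1$ acting trivially on $A_1\cup B$ and as a transposition on each $A_j$, $j\neq 1$; since each $A_j$ has exactly two elements these transpositions coincide, so $gg_1$ is a single transposition.

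Second, and more seriously, your construction of the $H_i$ rests on the unproved claim -- which you yourself flag as the ``main obstacle'' -- that the wild inertia of the compositum is the full product $\prod_i\F_q$, equivalently that the tame twist characters $e_i$ are pairwise distinct. This is not established, and there is no reason it should hold in general: the clumps may be permuted by symmetries of the configuration, in which case the extensions $L_i$ carry identical characters and your Goursat-type argument yields nothing (for the trinomial curves of Section~\ref{trinomials} the tangency points are literally related by a $\mu_d$-action). More importantly, the difficulty is self-inflicted: the lemma only asks for subgroups $H_i\subset G$, not $H_i\subset H$, and transitivity of $H_i$ on each $A_j$ is a statement about the image of $H_i$ in each single $\Sym(A_j)$, not about the joint product structure of the wild inertia. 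The paper's device (used for $g_1$ in part (1) and spelled out in the proof of Lemma~\ref{trinomial-inertia}) is to take, for each $i$, the pencil through the $i$-th tangency point itself: there the roots lying over $P_i$ consist of one rational root together with a Newton-polygon segment of slope $1/(q-1)$, so they generate only the tame extension $\Klaur{t^{1/(q-1)}}$, while for $j\neq i$ Lemma~\ref{AGL1} still gives an $\AGL_1(q)$-extension $L_j$. Defining $H_i$ as the Galois group over $\Klaur{t^{1/(q-1)}}$, its image in $\Gal(L_j/\Klaur{t})\cong\AGL_1(q)$ is exactly the translation subgroup $\F_q$, because $\Klaur{t^{1/(q-1)}}$ is the maximal tame subextension of $L_j$; hence $H_i$ acts transitively on each $A_j$, $j\neq i$, while fixing $A_i\cup B$ pointwise ($B$ is unramified, hence rational, since the residue field is algebraically closed). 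Only the image in each factor is ever needed, so the ``technical heart'' of your plan -- the independence of the wild parts across clumps -- disappears entirely.
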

\begin{proof}
	Let $U \subset (\PP^2)^*$ be a dense open subset over which the covering $\pi_X$ is \'{e}tale. The complement of $U$ contains finitely many lines. A line in $(\PP^2)^*$ defines a point in $(\PP^2)^{**}=\PP^2$. Let $S \subset \PP^2$ be the set of points corresponding to lines in the complement of $U$. Then for every point $P \not\in S$ the restriction of $\pi_X$ to the family of lines passing through $P$ is generically \'{e}tale. A general tangent to $X$ is tangent at exactly $s$ points with multiplicity $\max(2,q)$ at each one. Since $X$ is not strange, a general tangent line does not intersect the set of singular points $\mathrm{Sing}_X$ of $X$ and does not intersect $S$.  Choose a tangent line $\ell$ to $X$ that does not intersect $S \cup \mathrm{Sing}_X$ and such that $\ell \cap X$ has $s$ points with multiplicity $\max(2,q)$ and $d-qs$ points with multiplicity $1$. In particular for every point $P$ on $\ell$ the restriction of $\pi_X$ to the family of lines passing through $P$ is generically \'{e}tale.
	
	 Let $P_1, ..., P_s$ be the points of tangency and $Q_1, ..., Q_{d-qs}$ be the rest of the intersection. Choose a line in $\PP^2$ that is disjoint from both $\bigcup_i P_i$ and $\bigcup_j Q_j$. Consider the affine plane $\Aff^2$ obtained by removing the chosen line. Choose coordinates $x,y$ in $\Aff^2$ so that $\ell$ is the line $y=0$, and the coordinates of $P_1, ..., P_s$ are $(\alpha_1,0), (\alpha_2, 0),..., (\alpha_s, 0)$. Choose an algebraic closure of the fraction field of the local ring at $[\ell] \in (\PP^2)^*$; identify $\Omega$ with the fiber at the geometric point corresponding to this choice. Let $A_i \subset \Omega$ be the set that reduces to $P_i$, and let $B=\Omega \setminus \bigcup_i A_i$ be the rest of $\Omega$. For an element $\alpha \in K$ consider the family of lines $\ell_t=\ell_t(\alpha)$ given by the equation $y=t(x-\alpha)$. The family $\ell_t(\alpha)$ is the family of lines passing through $(\alpha,0)$, so the covering $\pi_X$ is generically \'{e}tale above the line corresponding to $\ell_t(\alpha)$ in $(\PP^2)^*$. We want to analyze inertia groups at $t=0$ of the restriction of $\pi_X$ to these families.
	
	\begin{enumerate}
		\item\label{lemmachar0} Assume $q=1$. In this case $\Char K \neq 2$, see \cite{Katz1973}*{Proposition~3.3}. We need to describe the extension of $\Klaur{t}$ obtained by adjoining the coordinates of the intersection of $X$ with $y=t(x-\alpha)$. Choose a uniformizer $x_i=x-\alpha_i$. For each point $P_i$ consider the local uniformization $y=p_i(x_i)$ of $X$ with respect to $x_i$, where $p_i(x_i)$ is a power series with coefficients in $K$. Since $q=1$ the power series $p_i(x_i)$ vanishes at $0$ with multiplicity $\max(q,2)=2$. When $\alpha \neq \alpha_i$ the Newton polygon of   $Q_i=p_i(x_i)-tx_i-t\alpha+t\alpha_i$ has a unique positive slope segment, it connects $(0,1)$ and $(2,0)$. Hence the positively valued roots of $Q_i$ have valuation $1/2$. Therefore when $\alpha \neq \alpha_i$ the extension obtained by adding the positively valued roots of $Q_i$ to $\Klaur{t}$ is the (unique) ramified degree $2$ extension. Similarly, if $\alpha = \alpha_i$ the equation $Q_i=0$ has two positively valued roots, both lying in $\Klaur{t}$. Taking $\alpha$ to be distinct from each $\alpha_i$ gives an element $g \in G_X$ that fixes $B$ and acts on each $A_i$ as a transposition. Taking $\alpha=\alpha_1$ gives an element $g_1 \in G_X$ that acts by identity on $B \cup A_1$ and as a transposition on $A_j$ for all $j \neq 1$. The product $gg_1$ is a transposition.
		
		\item Now assume $q>1$ is a power of the characteristic. This statement is similar to part \ref{lemmachar0}, except the degrees of the extensions are no longer $2$ and some wild ramification appears.  Take $x_i=x-\alpha_i$ as a uniformizer for $X$ at $P_i$. Let $(x_i,p_i(x_i))$ be the uniformization and define $c_i, d_i$ by $p_i(x_i)=d_i x^q + c_i x^{q+1}+O(x^{q+1})$. Choose $\alpha \in K$ so that both $d_i+(\alpha-\alpha_i)c_i \neq 0$ and $\alpha \neq \alpha_i$ hold for every $i$. 
		Consider the intersection of $X$ with the line $y=t(x-\alpha)$ over the field $\Klaur{t}$. The $x$-coordinates of the intersection points that reduce to $P_i$ are the positively valued roots of $Q_i(x)=p_i(x_i)-tx_i+t(\alpha-\alpha_i)$. Since $\alpha \not\in S$, the intersection of $X$ with $y=t(x-\alpha)$ over $K(t)$ defines a separable extension of $K(t)$. Therefore the positively valued roots of $Q_i$ lie in a separable extension of $\Klaur{t}$. Lemma~\ref{AGL1} applies to the power series $Q_i$ (condition $v(a_1a_q-a_0a_{q+1})=1$ becomes $d_i+(\alpha-\alpha_i)c_i \neq 0$). Let $L_i/\Klaur{t}$ be the extension obtained by adjoining all positive valuation roots of $Q_i$ to $\Klaur{t}$. Then the Galois group of $L_i/\Klaur{t}$ is $\AGL_1(q)$. The ramification index of $L_i/\Klaur{t}$ is divisible by $q(q-1)$. In particular every field $L_i$ is an overfield of $\Klaur{t^{1/(q-1)}}$. Let $H$ be the Galois group of the compositum of $L_i, i=1,...,s$. Take an element $h \in H$ that surjects onto a generator of $\Gal(\Klaur{t^{1/(q-1)}}/\Klaur{t})$. Every such element $h$ will surject onto an element of order $q-1$ on each $\Gal(L_i/\Klaur{t})$. We now examine the possibilities for the action of $h$ on the positive valued roots of $Q_i$. Assume that for some fixed $i$ the element $h$ has an orbit of length less than $q-1$ on the roots of $Q_i$. The extension $L_i/\Klaur{t}$ is generated by adjoining any two roots of $Q_i$. Assume that $h$ has an orbit of length $k$ with $1<k<q-1$. Then $h^k$ fixes at least two roots and therefore is the identity element in $\Gal(L_i/\Klaur{t})$. Contradiction. Therefore $h$ acts on each $A_i$ by fixing one point and moving the rest cyclically.

	\end{enumerate}
\end{proof}

\begin{remark}\label{introzero}
	When $\Char K = 0$ Lemmas~\ref{inertia} and~\ref{two-trans} imply that $G$ is $2$-transitive and contains a transposition, so $G=S_d$.
\end{remark}

\section{Sectional monodromy groups of nonstrange curves}\label{spacesection}

The main result of this section is Theorem~\ref{SpaceMundanity} that describes sectional monodromy groups of nonstrange space curves. 
The sectional monodromy group of a nonstrange space curve is triply transitive by Proposition~\ref{r-transitive}. Triply transitive permutation groups can be classified; for a statement and definitions of relevant groups see \cite{Cameron1999}*{Sections~7.3,~7.4}. 

\begin{proposition}\label{triply}
	If $G$ is a triply transitive permutation group of degree $d$ then either $G \supset A_d$ or one of the following holds.
	\begin{enumerate}
		\item $G=\AGL_n(2)$, $d=2^n$;
		\item $G=G_1$, $d=16$;
		\item $\PSL_2(r) \subset G \subset \PGaL_2(r)$, $d=r+1$, $r$ is a power of a prime;
		\item $G=M_{11}$, $d=11$;
		\item $G=M_{11}$, $d=12$;
		\item $G=M_{12}$, $d=12	$
		\item $G=M_{22}$, $d=22$;
		\item $G=\Aut(M_{22})$, $d=22$;
		\item $G=M_{23}$, $d=23$;
		\item $G=M_{24}$, $d=24$. 	
	\end{enumerate}
\end{proposition}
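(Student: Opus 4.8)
The plan is to derive Proposition~\ref{triply} from the classification of finite $2$-transitive permutation groups, which is available as a consequence of the classification of finite simple groups (this is the source of the dependence flagged in the warning of the introduction); see \cite{Dixon-Mortimer1996}*{\S 7.7} and \cite{Cameron1999}*{\S\S 7.3--7.4} for the statement and for the definitions of all the groups that appear below. Since a $3$-transitive group is in particular $2$-transitive, $G$ occurs on that list, and its unique minimal normal subgroup $N$ is either elementary abelian and regular (the \emph{affine} type) or nonabelian simple with $\operatorname{soc}(G)=N$ (the \emph{almost simple} type). It then remains to run through the list and decide, in each case, whether the two-point stabilizer is transitive on the remaining points, i.e.\ whether $G$ is actually $3$-transitive.

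In the affine case write $N=V=\F_p^n$ and $G=V\rtimes G_0$ with $G_0\leqslant\GL(V)$ one of the $2$-transitive affine stabilizers. Using that $V$ acts by translations sharply transitively, one checks that $G$ is $3$-transitive if and only if $G_0$ acts $2$-transitively on $V\setminus\{0\}$: move one of the three points to $0$; its stabilizer is $G_0$, and for one, equivalently every, nonzero $v$ the subgroup $(G_0)_v$ is transitive on $V\setminus\{0,v\}$ exactly when $G_0$ is $2$-transitive on $V\setminus\{0\}$ (the degenerate cases $\#V\leqslant 2$ have $d\leqslant 2$ and $G\supset A_d$). Over $\F_q$ with $q\geqslant 3$ the scalar subgroup $\F_q^{\times}$ already has orbits of size $q-1$ on $V\setminus\{0\}$ and linear (in)dependence of an ordered pair of vectors is $G_0$-invariant, so apart from the tiny cases that are absorbed into $G\supset A_d$ no such $G_0$ is $2$-transitive on $V\setminus\{0\}$; hence $p=2$. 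Over $\F_2$ the group $\GL_n(2)$ is $2$-transitive on $V\setminus\{0\}$ (distinct nonzero vectors are automatically independent), giving $G=\AGL_n(2)$, and a short inspection of the $2$-transitive affine stabilizers contained in $\GL_4(2)\cong A_8$ shows that the only proper one still $2$-transitive on the $15$ nonzero vectors is $A_7$ in its exceptional $2$-transitive action, producing the degree-$16$ group $G_1=2^4{:}A_7$; the symplectic, $\mathrm{G}_2(q)'$, and extraspecial-normalizer candidates all fail (an ordered pair of vectors carries an invariant, or the two-point stabilizer is too small to be transitive). Since $\AGL_n(2)$ for $n\geqslant 3$ and $G_1$ are far smaller than $A_{2^n}$, this gives exactly items~(1) and~(2).

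In the almost simple case one goes through the $2$-transitive actions with simple socle. The infinite families $\operatorname{PSU}_3(q)$, $\operatorname{Sz}(q)$, $\operatorname{Ree}(q)$ on $q^e+1$ points, the actions of $\operatorname{Sp}_{2m}(2)$ on $2^{2m-1}\pm 2^{m-1}$ points, and the sporadic actions of $\PSL_2(11)$ (degree $11$), $A_7$ (degree $15$), $\PSL_2(8)$ (degree $28$), $\mathrm{HS}$, and $\mathrm{Co}_3$ all have an intransitive two-point stabilizer, hence are not $3$-transitive. For $\PSL_d(q)\leqslant G\leqslant\PGaL_d(q)$ on the points of $\PP^{d-1}(\F_q)$, collinearity of point-triples is preserved once $d\geqslant 3$, so $3$-transitivity forces $d=2$, leaving $\PSL_2(r)\leqslant G\leqslant\PGaL_2(r)$ on $r+1$ points, which is item~(3); the natural action of $A_n\leqslant G\leqslant S_n$ gives $G\supset A_d$. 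What survives are the Mathieu actions: $M_{11}$ on $11$ points ($4$-transitive), $M_{11}$ on $12$ points ($3$-transitive), $M_{12}$ on $12$ ($5$-transitive), $M_{22}$ and $\Aut(M_{22})$ on $22$ ($3$-transitive), $M_{23}$ on $23$ ($4$-transitive), and $M_{24}$ on $24$ ($5$-transitive) --- precisely items~(4)--(10). Combining the two cases gives the proposition.

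I expect the main obstacle to be organizational rather than conceptual: there is no single argument, so the real work is invoking the correct CFSG-dependent classifications and not overlooking the exceptional affine group of degree $16$, distinguishing the two inequivalent $2$-transitive actions of $M_{11}$ (and the several index-two overgroups of $\PSL_2(r)$, e.g.\ the three index-two overgroups of $A_6$ of which only $\PGL_2(9)$ is $3$-transitive on the projective line), and being careful that item~(3) is to be read as locating $G$ between $\PSL_2(r)$ and $\PGaL_2(r)$, not as a claim that every such overgroup is $3$-transitive. In a written-out proof it is legitimate, and considerably shorter, simply to quote the classification of $3$-transitive groups directly from \cite{Cameron1999}*{\S\S 7.3--7.4}.
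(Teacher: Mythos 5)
Your argument is correct in substance, but it takes a different route from the paper: the paper offers no proof of Proposition~\ref{triply} at all — it simply quotes the known classification of triply transitive groups, citing \cite{Cameron1999}*{Sections~7.3,~7.4} — whereas you re-derive that classification from the CFSG-based list of $2$-transitive groups by checking, case by case, which affine stabilizers are $2$-transitive on $V\setminus\{0\}$ and which almost simple actions have a transitive two-point stabilizer. What your route buys is a self-contained reduction and an explanation of where the exceptional entries ($\AGL_n(2)$, $2^4{:}A_7$ of degree $16$, the two actions of $M_{11}$, $\Aut(M_{22})$) come from; what it costs is exactly the danger the paper's introduction warns about in connection with Rathmann's error (the omission of $\Aut(M_{22})$ from a list of triply transitive groups): the correctness of your proof rests entirely on not missing any entry of the $2$-transitive list, so in a written-out version you would still lean on the cited classification rather than gain independence from CFSG. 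One small point of looseness: in the affine case you first argue via "the scalar subgroup $\F_q^{\times}$", but a $2$-transitive affine stabilizer $G_0$ need not contain the full scalar group (e.g.\ the $\SL_2(13)$ and one-dimensional semilinear cases); your second observation — that semilinear maps preserve linear (in)dependence of an ordered pair, which kills all $n\geqslant 2$, $q\geqslant 3$ cases, together with a separate check of the $n=1$ subgroups of $\AGaL_1(q)$ — is the argument that actually does the work, and it does suffice. With that reading, and with item~(3) interpreted as you state (a containment, not an assertion that every group between $\PSL_2(r)$ and $\PGaL_2(r)$ is $3$-transitive), your proposal establishes the proposition; the paper's choice to cite the classification directly is, as you note yourself, the shorter and safer presentation.
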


\begin{remark}
	The group $G_1$ is a subgroup of $\AGL_4(2)$. It is a semidirect product of the translation group $\F_2^4$ and a specially embedded copy of $A_7 \subset \GL_4(2)\simeq A_8$.
\end{remark}

 We will also need two geometric statements before proving Theorem~\ref{SpaceMundanity}.
 
 \begin{proposition}[Kaji]\label{Gauss-proj}
 	Let $X \subset \PP^r$, $r \geqslant 3$ be a projective integral nonstrange curve, let $\upgamma_X\colon X \to X^*$ be its Gauss map. Let $Y \subset \PP^{r-1}$ be the projection of $X$ from a general point $P \in \PP^r$. Identify $K(X)$ and $K(Y)$ via the projection from $P$. Let $\upgamma_Y\colon Y  \to Y^*$ denote the Gauss map of $Y$. Then the subfield $K(X^*)$ of $K(X)$ defined by $\upgamma_X$ and the subfield $K(Y^*)$ of $K(X)=K(Y)$ defined by  $\upgamma_Y\colon Y$ coincide.
 \end{proposition}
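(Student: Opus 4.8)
The plan is to work directly with the geometry of the projection and reduce the statement to a computation of tangent lines. Fix a general point $P \in \PP^r$ and let $\pi_P \colon \PP^r \dashrightarrow \PP^{r-1}$ be the projection from $P$; by Proposition~\ref{G-under-proj} and the genericity of $P$, we may assume $\pi_P$ restricts to a birational map $X \to Y$, and we identify $K(X)$ with $K(Y)$. The dual curve $X^*$ lives in $\mathrm{Gr}(2,r+1)$ and is the closure of the locus of tangent lines; $\upgamma_X$ sends a general smooth point $R \in X$ to its (embedded) tangent line $T_R X \subset \PP^r$. Similarly $\upgamma_Y$ sends the image point $\pi_P(R)$ to $T_{\pi_P(R)} Y \subset \PP^{r-1}$. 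The key elementary geometric fact is that for a general point $R$ of $X$, the tangent line to $Y$ at $\pi_P(R)$ is exactly the image $\pi_P(T_R X)$ of the tangent line to $X$ at $R$: projection from a point is a linear map on the complement of $P$, and it carries the first-order neighborhood of $R$ (which computes $T_R X$) to the first-order neighborhood of $\pi_P(R)$, provided $P \notin T_R X$ — which holds for general $R$ since $X$ is nonstrange (so not every tangent passes through $P$) and the tangent developable is a surface, hence a general point $P \in \PP^r$ (with $r \geqslant 3$) lies on no tangent line of $X$.

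With this identification in hand, $K(X^*) \subset K(X)$ is the subfield generated by the coordinates (Plücker coordinates on $\mathrm{Gr}(2,r+1)$) of the function $R \mapsto T_R X$, and $K(Y^*) \subset K(Y) = K(X)$ is generated by the Plücker coordinates of $R \mapsto \pi_P(T_R X)$. So I must show these two subfields of $K(X)$ coincide. The inclusion $K(Y^*) \subset K(X^*)$ is immediate: $\pi_P$ induces a morphism (rational map) $\mathrm{Gr}(2,r+1) \dashrightarrow \mathrm{Gr}(2,r)$ on the open locus of lines not meeting $P$, so the coordinates of $\pi_P(T_R X)$ are rational functions of the coordinates of $T_R X$, hence lie in $K(X^*)$. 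For the reverse inclusion $K(X^*) \subset K(Y^*)$, I need to recover the line $T_R X \subset \PP^r$ from the data of the point $\pi_P(R) \in Y$ together with its tangent line $T_{\pi_P(R)} Y \subset \PP^{r-1}$. This is where I would argue: the line $T_R X$ passes through $R \in X$, and $R$ is itself recoverable from $K(X)$ (the coordinates of the generic point of $X$ are literally in $K(X)$, and the identification $K(X) = K(Y)$ is via $\pi_P$, so knowing $\pi_P(R)$ and the original embedding of $X$ pins down $R$); and $T_R X$ maps under $\pi_P$ to $T_{\pi_P(R)} Y$. A line in $\PP^r$ through the known point $R$ whose image under projection from $P$ is the known line $T_{\pi_P(R)} Y$ is uniquely determined: it is the intersection of the plane $\langle P, T_{\pi_P(R)} Y\rangle$ (the cone over $T_{\pi_P(R)} Y$ with vertex $P$, a $2$-plane since $P$ is off that line's span for general $R$) with... actually more simply, it is the unique line through $R$ lying in that plane and not equal to a line through $P$. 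Hence the Plücker coordinates of $T_R X$ are rational functions of those of $R$ and of $T_{\pi_P(R)} Y$, all of which lie in $K(Y^*) \cdot K(X)$; combined with the fact that $R$'s coordinates generate a subfield already inside any field containing the function field structure, I get $K(X^*) \subset K(Y^*)$. I would state this last bijection cleanly as: the tangent line map for $X$ factors as $R \mapsto (\pi_P(R), T_{\pi_P(R)} Y) \mapsto T_R X$, a composition of a map to the incidence variety of (point, tangent) pairs on $Y$ and a reconstruction map that is a morphism.

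The main obstacle, and the step I would spend the most care on, is verifying that $\upgamma_Y(\pi_P(R)) = \pi_P(\upgamma_X(R))$ for general $R$ — i.e., that projection commutes with taking tangent lines at a general point. The subtlety is purely characteristic-$p$: the Gauss map may be inseparable, tangent lines may meet $X$ to high order, and one must make sure the computation of $T_{\pi_P(R)} Y$ via the local expansion of $Y$ at $\pi_P(R)$ agrees with the projected local expansion of $X$ at $R$. The clean way to handle this is to work with a local parameter $u$ at a general point $R$ of $X$, write the projective coordinates of $X$ as power series in $u$, note that $\pi_P$ acts by a (constant) linear projection on these series, and observe that the tangent line — defined as the line spanned by the $0$th and $1$st order parts of the expansion — is carried to the tangent line of $Y$ precisely because $P$ does not lie on $T_R X$ (so the $0$th and $1$st order parts remain linearly independent after projecting). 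Since $X$ is nonstrange and $r \geqslant 3$, a dimension count on the tangent developable surface shows the general $P$ avoids all tangent lines, so this non-degeneracy holds generically in $R$, which is all we need for an equality of subfields of $K(X)$. Once this commutation is established, the equality of subfields follows from the two inclusions above, both of which are formal consequences of the maps in question being morphisms on suitable open loci.
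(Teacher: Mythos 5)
Your commutation step (for general $P$, the tangent line at a general point projects to the tangent line of $Y$ at the image point, since $P$ avoids the tangent developable) and the resulting inclusion $K(Y^*)\subseteq K(X^*)$ are correct, but they are the easy half. The whole content of the proposition is the reverse inclusion $K(X^*)\subseteq K(Y^*)$, and your argument for it has a genuine gap. First, the reconstruction claim is false as stated: the lines through $R$ contained in the plane $\langle P, T_{\pi_P(R)}Y\rangle$ form a pencil, and every member of that pencil except the one through $P$ projects isomorphically onto $T_{\pi_P(R)}Y$, so the pair $(R, T_{\pi_P(R)}Y)$ does not determine $T_RX$ by incidence alone. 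Second, and more fundamentally, you are not entitled to use $R$ at all: the coordinates of the generic point $R$ generate the full function field $K(X)$, so expressing the Pl\"ucker coordinates of $T_RX$ as rational functions of $R$ and of $T_{\pi_P(R)}Y$ only yields $K(X^*)\subseteq K(Y^*)\cdot K(X)=K(X)$, which is vacuous (the Gauss map need not be birational, so $K(Y^*)$ is in general a proper subfield of $K(X)$ not containing the coordinates of $R$).

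What actually has to be proved is that the projection-induced map of dual curves $X^*\dashrightarrow Y^*$ inside the Grassmannian is birational onto its image, equivalently that $\deg\upgamma_Y=\deg\upgamma_X$. This needs two things your proposal does not address: (i) generic injectivity, i.e.\ for a general tangent line $\ell$ there is no other tangent line $\ell'$ coplanar with $\ell$ such that $P\in\langle\ell,\ell'\rangle$ --- this is where nonstrangeness (and nondegeneracy) must enter via a dimension count, and it is telling that your write-up never uses the nonstrange hypothesis in an essential way; and (ii) separability of $K(X^*)/K(Y^*)$, i.e.\ ruling out that the linear projection of $X^*$ in Pl\"ucker coordinates is generically inseparable onto its image --- in characteristic $p$ this is a genuine possibility in principle (precisely the kind of Gauss-map pathology this section is about), and disposing of it is the real content of Kaji's theorem. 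Note that the paper itself does not reprove this: its ``proof'' is a citation of Kaji's Theorem~2.1 and the remark following it. So your proposal proves the formal half and leaves the essential half unproved.
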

\begin{proof}
	See \cite{Kaji1989}*{Theorem~2.1} and the remark following the theorem.
\end{proof}
\begin{proposition}[\cite{Ballico-Cossidente1999}*{Theorem~3.1}]\label{skew-tangents}
	Let $X \subset \PP^3$ be a smooth curve of degree $d$. Assume that for a general point $P$ of $X$ there is no tangent line to $X$ at a point $Q \neq P$ that intersects the tangent line at $P$. Then either $d=3$ and $X$ is a twisted cubic or $d=q+1$ for some power $q$ of the characteristic. In the latter case $X$ is rational and isomorphic to the projective closure of the parametrized curve $(t,t^{q}, t^{q+1})$.
\end{proposition}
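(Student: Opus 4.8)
The plan is to rewrite the hypothesis as a divisor condition on the product surface $X\times X$ in terms of the Plücker form and the Gauss map, distill from it a single numerical identity, and then run a short case analysis forced by the generalized Plücker formulas. Note first that the hypothesis already forces $X$ to be nonstrange, since a strange curve has all of its tangent lines through one point and so any two of them meet. Hence the Gauss map $\upgamma\colon X\to\Gr(2,4)\subset\PP^5$, $P\mapsto[T_PX]$, is a morphism onto a curve $X^*=\upgamma(X)$. Let $b$ be the bilinear form attached to the Plücker quadric $\calQ=\Gr(2,4)$ (symmetric, or alternating in characteristic $2$); two lines $\ell,\ell'\subset\PP^3$ meet if and only if $b(\ell,\ell')=0$. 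Put $L\colonequals\upgamma^*\calO_{\PP^5}(1)$. Then $s(P,Q)\colonequals b(\upgamma(P),\upgamma(Q))$ is a section of $\mathrm{pr}_1^*L\otimes\mathrm{pr}_2^*L$ on $X\times X$ whose zero divisor is $\{(P,Q):T_PX\cap T_QX\neq\emptyset\}$, and the hypothesis says precisely that the part of this divisor not supported on the diagonal $\Delta$ meets a general fibre $\{P\}\times X$ in the empty set (hence, by symmetry, is empty).

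Second, I would compute the multiplicity of $\Delta$ in $\mathrm{div}(s)$ and show it equals $2\epsilon$, where $\epsilon$ is the contact order at a general point $P$ of the tangent line $T_PX$ with $X$ (so $\epsilon=2$ in characteristic $0$, and $\epsilon=\epsilon_2$, the third entry of the generic order sequence $0,1,\epsilon_2,\epsilon_3$ of $X$, in general). The mechanism: the successive osculating directions $\upgamma,\upgamma',\upgamma'',\dots$ of $X^*$ at $\upgamma(P)$, read as first-order deformations of the line $T_PX$, fix a point of that line to order $\epsilon-1$ — because the points of $X$ near $P$ lie on $T_PX$ to order $\epsilon$ — so they lie on $\calQ$; substituting this into the (Hasse) Taylor expansion of $s(P,P+t)$ shows that $s(P,P+t)$ vanishes to order exactly $2\epsilon$. (As sanity checks, $s=(t_0-t_1)^4$ for the twisted cubic and $s=(t_0-t_1)^{2q}$ for $(t,t^q,t^{q+1})$, with $\epsilon=2$ and $\epsilon=q$ respectively.) Since $\mathrm{div}(s)$ restricted to $\{P\}\times X$ has degree $\deg L$ and equals $2\epsilon[P]+R_P$, the hypothesis becomes the single identity $\deg L=2\epsilon$.

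Third, feed $\deg L=2\epsilon$ into the generalized Plücker formula for $\deg L$ in terms of $d$, $g$, and the ramification of $\upgamma$ (see \cite{Hefez1989} and its references). In characteristic $0$ one has $\deg L=2d+2g-2$ and $\epsilon=2$, so $d+g=3$, and the only nondegenerate possibility in $\PP^3$ is the twisted cubic. In characteristic $p$ one deduces first that $g=0$ and that the generic order sequence is $0,1,q,q+1$ for some power $q$ of $p$, whence $d=\epsilon+1=q+1$; a direct classification of rational curves in $\PP^3$ with that order sequence then shows that $X$ is projectively equivalent to the projective closure of $\{(t,t^q,t^{q+1})\}$ (whose Gauss image is a smooth conic and which indeed has pairwise skew tangent lines), while $d=3$ recovers the twisted cubic.

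The hard part will be this last step in characteristic $p$: squeezing $g=0$ and the precise order sequence out of $\deg L=2\epsilon$ requires controlling the Stöhr–Voloch ramification divisor of a possibly nonclassical curve, and the subsequent rigidity statement — that the order sequence $0,1,q,q+1$ forces $X$ to be $\PGL_4(K)$-equivalent to the monomial curve $\{(t,t^q,t^{q+1})\}$ — is a finite but delicate computation. The multiplicity count along $\Delta$ in the second step also needs care with Hasse derivatives, since the osculating sequence of $X^*$ itself can jump in characteristic $p$; one should keep track separately of whether $\upgamma$ is separable (giving the twisted cubic with $X^*$ a rational normal quartic) or purely inseparable of degree $q$ (giving the monomial family with $X^*$ a conic).
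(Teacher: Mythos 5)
The paper itself gives no argument for this proposition --- it is quoted from Ballico--Cossidente (Theorem~3.1 of the cited paper) --- so what you are really proposing is an independent proof of that cited theorem, and as it stands it has a genuine gap at its central step. Your whole case analysis is driven by the identity $\deg L=2\epsilon$, which you extract from the claim that the diagonal $\Delta$ appears in $\operatorname{div}(s)$ with multiplicity exactly $2\epsilon$, where $\epsilon=\epsilon_2$ is the generic contact order of the tangent line. That multiplicity claim is false in general. Writing a general point in coordinates $(1,t,\varphi(t),\psi(t))$ one finds
$s(t_0,t_1)=(t_0-t_1)\bigl[\varphi'(t_0)\psi'(t_1)-\psi'(t_0)\varphi'(t_1)\bigr]+\bigl[\varphi(t_0)-\varphi(t_1)\bigr]\bigl[\psi'(t_0)-\psi'(t_1)\bigr]-\bigl[\psi(t_0)-\psi(t_1)\bigr]\bigl[\varphi'(t_0)-\varphi'(t_1)\bigr]$,
and for the germ $(t,t^2,t^q)$ in odd characteristic $p$ with $p\mid q$ this equals $-2(t_0-t_1)^{q+1}$: here $\epsilon=\epsilon_2=2$ but the vanishing order along $\Delta$ is $q+1$, not $4$. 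The correct multiplicity involves $\epsilon_3$ as well (in every example it is $\epsilon_2+\epsilon_3-1$); your two sanity checks pass only because for the twisted cubic and for $(t,t^q,t^{q+1})$ one has $\epsilon_3=\epsilon_2+1$, which makes $\epsilon_2+\epsilon_3-1=2\epsilon_2$ coincidentally. Since determining the generic order sequence is precisely what the characteristic-$p$ case must accomplish, you cannot assume $\epsilon_3=\epsilon_2+1$ when setting up the identity; the numerical input to your Pl\"ucker-formula analysis therefore has to be re-derived, and the ensuing case analysis redone, with $\epsilon_3$ kept in play.

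Beyond that, the steps you yourself flag as "the hard part" are exactly the content of the quoted theorem and are not supplied: deducing $g=0$ and the order sequence $0,1,q,q+1$ from the (corrected) degree identity requires the St\"ohr--Voloch ramification analysis for possibly nonclassical curves, and the rigidity claim that this order sequence forces projective equivalence to the closure of $(t,t^q,t^{q+1})$ is asserted without proof. So the proposal is a plausible strategy outline --- and a genuinely different route from simply invoking Ballico--Cossidente --- but with the multiplicity miscount at its core and the decisive characteristic-$p$ arguments deferred, it does not yet constitute a proof; if the goal is only to use the statement, the citation the paper gives is the efficient course.
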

The following theorem is the main result of this section.
\begin{theorem}\label{SpaceMundanity}
	Let $X \subset \PP^m$, $m \geqslant 3$ be a nondegenerate integral nonstrange curve of degree $d$. Then one of the following holds.
	\begin{enumerate}
		\item The sectional monodromy group $G$ contains the alternating group $A_d$.
		\item $m=3$ and $X$ is projectively equivalent to the rational curve given as the projective closure of the parametrized curve $(t, t^q, t^{q+1}) \subset \Aff^3$ for some power $q$ of the characteristic. In this case $G = \PGL_2(q)$.
		\item\label{SpaceCase3} $m=3$, $\Char K=2$, the tangent variety of $X$ is a smooth quadric and $G$ is contained in $\AGL_n(2)$.		
	\end{enumerate}

\end{theorem}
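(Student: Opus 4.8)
The plan is to reduce to a curve in $\PP^3$, then to a plane curve, and then to play the classification of triply transitive groups (Proposition~\ref{triply}) against the inertia data of Lemma~\ref{inertia}. First I would use Proposition~\ref{G-under-proj} to project from general points down to $\PP^3$ without changing $G$; here one checks that a general projection of a nondegenerate integral nonstrange curve is again nondegenerate, integral and nonstrange, the last point because a line meeting every tangent of the projection pulls back to a line through the center of projection meeting every tangent of $X$, which is impossible for a general center by Lemma~\ref{threadthetangents}. Moreover, projections out of a $\PP^r$ with $r\ge 4$ are isomorphisms onto their images (the center avoids the secant and tangent varieties), so degree and smoothness descend, and for $m\ge 4$ with $X$ smooth the statement is already Rathmann's Theorem~\ref{Rathmann}; so we may assume $m=3$. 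By Proposition~\ref{r-transitive} the group $G$ is $3$-transitive, hence by Proposition~\ref{triply} either $A_d\subseteq G$, which is case~(1), or $G$ is one of the listed exceptional groups; assume the latter.

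Next, project once more to a plane curve $Y\subset\PP^2$ (again nondegenerate, integral, nonstrange of degree $d$, with $G_Y\cong G$), and let $q,s$ be the inseparable and separable degrees of its Gauss map; by Proposition~\ref{Gauss-proj} this $q$ is also the inseparable degree of the Gauss map of $X$. If $q=1$, Lemma~\ref{inertia}(1) gives a transposition in $G$, and a $2$-transitive group containing a transposition is the full symmetric group, contradicting that $G$ is exceptional. So $q\ge 2$, $\Char K=p>0$, $q=p^a$, and Lemma~\ref{inertia}(2) furnishes a partition $\Omega=A_1\sqcup\dots\sqcup A_s\sqcup B$ with $\#A_i=q$, $\#B=d-sq$, a subgroup $H\le G$ fixing $B$ pointwise, preserving the partition, and inducing $\AGL_1(q)$ on each $A_i$, an element $h\in H$ that is a product of $s$ disjoint $(q-1)$-cycles, and subgroups $H_i$ fixing $A_i\cup B$ pointwise and transitive on every other block. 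For $s=1$ the group $H$ acts faithfully on $A_1$, so $H\cong\AGL_1(q)$ fixes exactly $d-q$ points; and since $G$ is primitive and $q\ge 3$, Theorem~\ref{Jones} applies to the cycle $h$.

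Feeding these constraints into the list of Proposition~\ref{triply} should leave only two families. The Mathieu groups and $G_1$ are ruled out by their high (often sharp) transitivity — e.g.\ $M_{11}$ on $11$ points and $M_{12}$ on $12$ points have no nontrivial subgroup fixing $d-q$ points — and by incompatibility with the cycle type of $h$; what remains is $G\subseteq\AGL_n(2)$ with $d=2^n$ (absorbing $G_1\subset\AGL_4(2)$) and $\PSL_2(r)\subseteq G\subseteq\PGaL_2(r)$ with $d=r+1$. In the affine case the point stabilizer is $2$-transitive on $d-1$ points, which for $p$ odd is impossible in degrees $\ge 4$, so $\Char K=2$; and for the geometry one argues in $\PP^3$: if $X$ lay on no quadric then Lemma~\ref{project} and induction would give $q=1$, so $X$ lies on a smooth quadric $Q$ whose ruling is the infinite family $\calL$ of Lemma~\ref{threadthetangents}, whence every tangent of $X$ meets all lines of one ruling of $Q$ and therefore lies on $Q$, so the tangent developable of $X$ is $Q$ — this is case~(3). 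In the projective case, $\#B=(r+1)-sq\ge 0$ forces $s=1$, matching $h$ to the cycle types available in $\PGaL_2(r)$ forces $q=r$, so $d=q+1$, and the stabilizer of the unique point of $B$ then contains $H\cong\AGL_1(q)$ acting faithfully on the remaining $q$ points, which in the action on $\PP^1(\F_q)$ pins $G$ to $\PGL_2(q)$ exactly ($\PSL_2(q)$ excluded by order, semilinear overgroups excluded because the sectional covering has no purely inseparable subextension forcing Frobenius into $G$).

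Finally, in the case $d=q+1$, $G=\PGL_2(q)$ I would identify the curve: $s=1$ makes the Gauss map of $X$ purely inseparable of degree $q$, so distinct tangent lines of $X$ are skew and a general tangent meets $X$ only at its point of tangency; after using the isomorphism-onto-image projections to see that $X$ is smooth, Proposition~\ref{skew-tangents} recognizes $X$, up to projective equivalence and excluding the twisted cubic, as the projective closure of $(t,t^q,t^{q+1})$, which forces $m=3$. Conversely, for this curve a general hyperplane section solves $a_3t^{q+1}+a_2t^q+a_1t+a_0=0$, i.e.\ $t^q=-(a_1t+a_0)/(a_3t+a_2)$, whose splitting field over $K(a_0,a_1,a_2,a_3)$ realizes the sharply $3$-transitive action of $\PGL_2(q)$ on $\PP^1(\F_q)$, so $G=\PGL_2(q)$ — case~(2). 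The step I expect to be the main obstacle is the group-theoretic elimination in the third paragraph — running through the exceptional triply transitive groups and using the $\AGL_1(q)$-blocks, the cycle $h$, and Theorem~\ref{Jones} to see that exactly the affine and $\PGL_2$ families survive and then pinning down $\Char K=2$ resp.\ $G=\PGL_2(q)$ — together with verifying the hypotheses of Proposition~\ref{skew-tangents} needed to recognize the curve.
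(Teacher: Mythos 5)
Your overall framework is the same as the paper's (project to a plane curve, apply Lemma~\ref{inertia}, use triple transitivity from Proposition~\ref{r-transitive} together with Proposition~\ref{triply}, and recognize the exceptional curve via Proposition~\ref{skew-tangents}), but the heart of the proof --- eliminating the exceptional triply transitive groups --- is not carried out, and the shortcuts you propose in its place do not work. In the case $\PSL_2(r)\subseteq G\subseteq \PGaL_2(r)$, the claim that $\#B=(r+1)-sq\geqslant 0$ forces $s=1$ is simply false (e.g.\ $q=2$, $s=5$, $d=r+1=12$ is numerically consistent); ruling out all configurations with $q<r$, including $s\geqslant 2$, is precisely the delicate analysis of fixed points and cycle types of semilinear maps (pointwise stabilizers of triples being cyclic generated by Frobenius, counting orbits against fixed points) that occupies most of the paper's argument in this case. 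The Mathieu groups and $G_1$ are likewise not dispatched by ``high transitivity'': the genuine eliminations rest on the available cycle types ($2^t$ only for $t=4$ or $8$, products of $2^8$'s not closing up, elements of $H_1$ forced to fix too many points, etc.), combined with $sq<d$ --- a fact you never establish --- and each of $M_{11}, M_{12}, M_{22}, \Aut(M_{22}), M_{23}, M_{24}$ needs its own check. In the affine case, your route to $\Char K=2$ (``point stabilizer $2$-transitive on $d-1$ points, impossible for $p$ odd'') is not an argument; the paper excludes $q\neq 2$ by an $\F_2[t]$-module analysis of the element $h\in\GL_n(2)$. And your derivation of case~(3) is a non sequitur: Lemma~\ref{project} says nothing about forcing $q=1$, and lying on a smooth quadric does not by itself make the tangent developable equal to that quadric; the paper instead shows $\#B=2$ and $s=2^{n-1}-1$, and then derives a contradiction from the degree count against a quadric through three general tangent lines unless the tangent variety is the quadric.

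There are also two problems in the reduction and the endgame. Reducing to $m=3$ via Theorem~\ref{Rathmann} only covers smooth curves (and leans on a theorem whose proof the paper itself flags as gapped), so singular nonstrange curves in $\PP^m$ with $m\geqslant 4$ are left unhandled; the paper instead deduces $m=3$ from Proposition~\ref{r-transitive}, since the surviving exceptional groups are not $4$-transitive. Finally, pinning $G$ to exactly $\PGL_2(q)$ because ``the sectional covering has no purely inseparable subextension forcing Frobenius into $G$'' is not a valid argument --- containing $\AGL_1(q)$ in a point stabilizer is perfectly consistent with $G=\PGaL_2(q)$; the paper first identifies the curve (smoothness via the multiplicity count for hyperplanes through a tangent line, then Proposition~\ref{skew-tangents}) and only then invokes the known computation of the monodromy of $(t,t^q,t^{q+1})$, which is essentially the converse computation you sketch at the end and is the correct way to conclude $G=\PGL_2(q)$.
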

\begin{proof}
	  Let $s,q$ be the separable and inseparable degrees of the Gauss map $\upgamma_X$ of $X$. Recall that for a general tangent line $\ell$ of $X$ the degree of the scheme-theoretic intersection of $X$ and $\ell$ satisfies $\deg X \cap \ell = \deg \upgamma = sq$. Choose one such general tangent line $\ell$ and choose a point $P$ of $X$ that does not lie on $\ell$. Let $H$ be a hyperplane containing $\ell$ and $P$. The degree of the intersection of $H$ with $X$ satisfies $d=\deg X = \deg H \cap X \geqslant \deg \ell \cap X + 1 = sq+1$. Therefore $sq<d$.
	  
	    By Proposition~\ref{Gauss-proj} $s$ and $q$ are also the separable and inseparable degrees of a general projection of $X$. Let $X'$ be a general projection of $X$ to $\PP^2$. Proposition~\ref{G-under-proj} implies that $G_X=G_{X'}$ as permutation groups. Since $X'$ is a plane curve, Lemma~\ref{inertia} applies. We conclude that if $q=1$ the group $G$ contains a transposition, and therefore $G=S_n$. Assume $q>1$ and fix a decomposition $\Omega=A_1 \sqcup ... \sqcup A_s \sqcup B$, subgroups $H, H_i \subset G$, and an element $h \in H$ as in Lemma~\ref{inertia}. We will use the properties of $(A_i, B, H_i, H, h)$ stated in Lemma~\ref{inertia} throughout this proof. Since $sq<d$ the set $B$ is nonempty.
	  
	 The group $G$ is triply transitive by Proposition~\ref{r-transitive}. Therefore it is on the list of Proposition~\ref{triply}. We analyze each possibility.
	\begin{enumerate}[label=\textsf{Case (\alph*):}]
		\item $G\subset \AGL_n(2)$ and $\Omega=\F_2^n$. This case also covers the $G_1$ group of Proposition~\ref{triply}, since $G_1$ is a permutation subgroup of $\AGL_4(2)$. Assume first that $q \neq 2$. The element $h \in G$ acts as a product of $s$ $(q-1)$-cycles. The element $h$ has a fixed point, which we may assume is the origin in $\F_2^n$, so $h \in \GL_n(\F_2)$. Consider $\Omega = \F_2^n$ as an $\F_2[t]$ module, where $t$ acts as $h$. Since $h$ has fixed points and is not equal to the identity, one of the following cases holds.
		 
	\begin{enumerate}[label=(\roman*)]
		\item  $\Omega$ has a submodule $W$ isomorphic to $\F_2[t]/(t-1)^n$ for some $n>1$. Let $M$ be the restriction of $h$ to $W$. Then $M^{q-1}-1=0$. So $t^{q-1}-1$ is divisible by $(t-1)^n$ in $\F_2[t]$, therefore $n \leqslant v_2(q-1)$. This implies that the number of points in $W$ is $2^n \leqslant q-1$. But $q-1$ is the minimal size of a nontrivial orbit of $h$, contradiction.
		
		\item $\Omega=V_1 \oplus V_2$, where $V_1 \neq 0$ is indecomposable and $V_2$ contains all nonzero fixed points of $h$. Then for every fixed point $v$ of $h$, $v+V_1$ is invariant under the action of $h$. Therefore, $h$ has at least as many orbits of length greater than $1$ as it has fixed points. On the other hand, since $h$ acts on $A_i$ via fixing one point and permuting the rest cyclically and also fixes $B \neq \emptyset$ pointwise, $A$ has more fixed points than it has orbits of length greater than one. Contradiction. 
		\end{enumerate}
		Assume $q=2$. The set $B$ is the set of common fixed points of the elements of $H$. Therefore $B$ forms an affine space, and $\#B$ is a power of $2$. Similarly $B \cup A_1$ is the set of common fixed points of elements of $H_1$. Therefore $\#B + \#A_1=\#B+2$ is a power of $2$. Hence $\#B=2$, and $2^n=\# \Omega = 2s + \# B$, so $s=2^{n-1}-1$. We can assume that the tangent variety of $X$ is not a quadric, since otherwise we are in case~\ref{SpaceCase3}. If $n=2$, then a nonidentity element of $H$ acts on $\Omega$ as a transposition, hence $G=S_d$. Suppose $n>2$. Then $X$ has degree $2^n>4$ and therefore $X$ lies on at most one quadric surface. For a general tangent line $\ell$ of $X$ there does not exist a quadric $Q$ containing $X$ and $\ell$. Consider a triple $\ell_1, \ell_2, \ell_3$ of general tangents to $X$. There exists a smooth quadric $Q$ containing $\ell_1, \ell_2, \ell_3$ and not containing $X$. Then 
		\begin{multline*}2^{n+1}=\deg Q \cap X \geqslant \deg \ell_1 \cap X + \deg \ell_2 \cap X + \deg \ell_3 \cap X\\
		 = 3 \deg \upgamma_X = 6s=6 \cdot (2^{n-1}-1).\end{multline*}  This inequality is never satisfied for $n>2$, contradiction.

		\item $\PSL_2(r) \subset G \subset \PGaL_2(r)$ and $\Omega=\PP^1(\F_r)$. In this case $\deg X=\#\Omega=r+1$. The group $\PGaL_2(r)$ is generated by $\PGL_2(r)$ and the Frobenius. In most of the cases analyzed below, the subgroups $H, H_i$ contain elements that fix a triple of points. The group $\PGaL_2(r)$ is triply-transitive; the pointwise stabilizer of $\{0,1,\infty\}$ in $\PGaL_2(r)$ is the cyclic group generated by the Frobenius, and so the pointwise stabilizer of any triple of points is cyclic.  Suppose $q < r$. We consider several subcases.
		\begin{enumerate}[label=(\roman*)]
			\item $s=1$. In this case the set $B$ of Lemma~\ref{inertia} contains $r+1-q \geqslant 2$ elements.
			If $q=2$, then a nontrivial element of $H$ is a transposition, so $G=S_d$.
			Suppose $\#B=r+1-q=2$ and $q>2$. Then the element $h \in H$ acts as a $(q-1)$-cycle and fixes $3$ points. The stabilizer of a triple of points in $\PGaL_2(r)$ is a cyclic group generated by the Frobenius, and thus cannot have exactly $3$ fixed points when $q>2$. Therefore we can assume that $\#B=r+1-q \geqslant 3$ and $q>2$. The group $H$ stabilizes $B$ pointwise; therefore $H$ is abelian. On the other hand, Lemma~\ref{inertia} implies that $H$ maps surjectively onto $\AGL_1(q)$, contradiction.
			
			\item $s \geqslant 2, q>2$. The element $h \in \PGaL_2(r)$ fixes $s+\# B \geqslant 3$ points, and therefore  $h$ is a power of the Frobenius. Since $h$ only has orbits of sizes $1$ and $q-1$, the number $q-1$ must be prime. Thus $\F_r=\F_{\ell^{q-1}}$ for some prime power $\ell$. The action of $h$ on $\F_{\ell^{q-1}}$ is such that the number of fixed points is $s+\#B-1$ which is at least as large as the number of nontrivial orbits $s$. So $(\ell^{q-1}-\ell)/(q-1)=\#\{\mathrm{nontrivial\ orbits\ of\ }h\} \leqslant \#\{\mathrm{fixed\ points\ of\ } h\} =\ell+1$. The only pairs of prime powers $(\ell, q)$ with $q>2$ that satisfy this inequality are $(2,4)$, $(2,3)$, and $(3,3)$. These correspond to $(r,q)$ being $(8,4)$, $(4,3)$, and $(9,3)$. The case $(r,q)=(4,3)$ is impossible since $s>1$. Suppose $(r,q)=(9,3)$. The element $h$  must have exactly $3+1$ fixed points, and therefore $s=3$. Any element $g \in H_1$, $g \neq \mathrm{id}$ fixes $B \cup A_1$. Since $\# B + \# A_1 \geqslant 3$, the element $g$  must equal to the Frobenius in a degree $q-1=2$ extension, and therefore be a product of transpositions. However elements of $H_1$ preserve the decomposition $A_2 \cup A_3$ and act transitively on $A_2$, contradiction. Similarly, in the case $(r,q)=(8,4)$ the element $h$ must have $3$ fixed points, and hence $s=2$. In this case any nontrivial element of $H_1$ is a power of the Frobenius, and hence $H_1$ cannot act transitively on $A_2$, contradiction.
			
			\item $q=2$, $s \neq 1$. If $s=2$, then a nontrivial element of $H_1$ is a transposition, proving $G=S_d$. In particular we can assume $s \geqslant 3$, $r \geqslant 7$.
		 Choose three elements of $B \cup A_i$ and identify them with $0, 1, \infty$. A nontrivial element of $g \in H_i$ fixes $0,1, \infty$ and has order $2$, therefore $g$ is equal to the Frobenius of the quadratic extension $\F_r/\F_\ell$, $r=\ell^2$. Hence $H_i=\mathbb{Z}/2\mathbb{Z}$. Since $H_i$ acts transitively on $A_j$ for $j \neq i$, the nontrivial element $g \in H_i$ acts as a transposition on $A_j, j \neq i$. In particular $\#B+2=\# \PP^1(\F_\ell)=\ell+1$. Suppose $B$ has at least three points. Label them $0, 1, \infty$. Every element of $H$ fixes $0,1, \infty$ and has order at most $2$, so $H=\mathbb{Z}/2\mathbb{Z}$. Since $H$ acts transitively on each $A_i$, the set of fixed points of $H$ is $B$. Therefore $\#B=\ell+1=\#B+2$. Contradiction. We proved that $\#B \leqslant 2$ and $H_i$ is a group of order $2$ fixing $B \cup A_i$ and acting as a transposition on $A_j$ for $j \neq i$. Since $\ell+1=\#B+2 \leqslant 4$, either $\ell=2$ or $\ell=3$. Since $\ell^2=r \geqslant 7$, we have $\#B=2$, $\ell=3$, $s=4$. Let $h_i$ denote the nontrivial element of $H_i$. Then $h_1h_2$ fixes $A_3 \cup A_4 \cup B$ and acts on $A_1$ and $A_2$ as a transposition. No element of $\PGaL_2(9)$ fixes exactly $6$ points. Contradiction. 

		\end{enumerate}
		We have shown that $q$ cannot be less than $r$. Suppose $q=r$. The group $\PGaL_2(q)$ is not quadruply transitive unless it contains $A_{q+1}$ (i.e., unless $q=2,3,4$). Therefore Proposition~\ref{r-transitive} implies that the curve $X$ lies in $\PP^3$. Take a smooth point $P$ of $X$ and consider the family of hyperplanes passing through the tangent line to $X$ at $P$. Since the multiplicity of the intersection of such a hyperplane with $X$ at $P$ is at least $q$ and $\deg X = q+1$, the curve $X$ is smooth. If a general tangent to $X$ is concurrent with another tangent line, then the plane containing both of them intersects $X$ with multiplicity greater than $q+2>q+1$ which is impossible. Therefore we can apply Proposition~\ref{skew-tangents}. We conclude that $X$ is projectively equivalent to the projective closure of the rational curve $(t, t^q, t^{q+1})$. The monodromy group in this case is $\PGL_2(q)$, see \cite{Rathmann1987}*{Example~2.15}.
	\item $G$ is one of the Mathieu groups and the action on $\Omega$ is in the list of Theorem~\ref{Jones}. 
	 From Lemma~\ref{inertia} it follows that there is an element in $G$ that acts via a product of $s$ $(q-1)$-cycles. Take an element $g \in G$ that maps onto an element of order $p$ in $\AGL_1(p)$. Replacing $g$ by $g^k$ is necessary we can assume that $g$ acts as a nontrivial product of cycles of length $p$. Cycle types of the Mathieu groups (in different permutation representations) are listed in Appendix~\ref{MathieuCycles} (borrowing from \cite{ATLASv3}). Together the existence of cycle type $(q-1)^s$, existence of a product of $p$-cycles, and the condition $sq < d$ leaves the following cases.
	 \begin{enumerate}[label=(\roman*)]
	 	\item $G=M_{11}$, $d=11$, $q=2$, $s=4$. Any nontrivial element of $H_1$ has cycle type $2^t$, $1 \leqslant t \leqslant 3$. However there are no element with such cycle type in this representation.
	 	\item $G=M_{11}$, $d=12$, $s=5$, $q=2$. All elements having cycle type $2^t$, $t>0$ have cycle type $2^4$. These cannot form the subgroup $H$, since a product of two distinct such elements in $H$ is an element of type $2^2$.
	 	\item $G=M_{11}$, $d=11$, $q=5$, $s=2$. Every element of $H_1$ fixes at least $6$ points; $M_{11}$ has no such elements.
	 	\item $G=M_{11}$,  $d=12$, $q=2$, $s=5$. Any element of type $2^t$ with $t>0$ has $t=4$. Therefore every element of $H$ is a product of $4$ transpositions. A product of two distinct elements acting as products of four $2$-cycles is not a product of four $2$-cycles.
	
	 	\item $G=M_{12}$, $q=2$, $s \leqslant 5$. Any element of $H_1$ acts as a product of at most $s-1$ transpositions. Every element of $M_{12}$ with cycle type $2^t$ has $t=0, 4, 6$. Therefore $s=5$. Every element of $H$ acts as a product of at most $5$ transposition, so every nonidentity element of $H$ acts as a product of $4$ transpositions. A product of two $2^4$ elements of $H$ cannot be a $2^4$ element. Contradiction.
	 	\item $G=M_{12}$, $q=5$, $s=2$. The group $H_1$ contains a nonidentity element that fixes at least $7$ points; $M_{12}$ has no such elements.
		\item $G=M_{22}$, $q=2$, $s \leqslant 10$. Any element of $H_1$ acts as a product of transpositions. An element of $M_{22}$	that has cycle type $2^t$ has $t=0, 8$. Therefore $s \geqslant 9$. Every element of $H$ is a $2^8$ element. A product of $2$ distinct $2^8$ elements of $H$ cannot be a $2^8$ element, contradiction. 
	 	\item $G=\Aut(M_{22})$, $q=2$, $s \leqslant 10$. Any element of $H_1$ acts as a product of at most $s-1 \leqslant 9$ transpositions. An element of $\Aut(M_{22})$	that has cycle type $2^t$ has $t=0, 7,8$. Therefore $s \geqslant 8$.  Suppose $s=8$. Then $H_1$ acts transitively on $A_i$, for $i>1$. Since the only possible cycle type of a nonidentity element of $H_1$ is $2^7$, $H_1=\mathbb{Z}/2\mathbb{Z}$ acting as a transposition on every $A_i$, $i \neq 1$. Consider now the possible cycle types of elements of $H$. If $H$ contains a $2^8$ element $g$, then the product of $g$ with the nonidentity element of $H_1$ is a transposition, contradiction. On the other hand, a product of two different $2^7$ elements of $H$ cannot be a $2^7$ element. Therefore $s \neq 8$. Suppose $s>8$. Then the group $H$ contains two disctinct elements $g_1, g_2$ with cycle types $2^{t_1}, 2^{t_2}$ for $t_1, t_2 \in \{7,8\}$. Then $g_1g_2$ is not the identity, and therefore fixes at most $2*(s-7)$ elements of $\cup_i A_i$. On the other hand, $g_1$ and $g_2$ both act as a transposition on $A_i$ for at least $t_1+t_2-s \geqslant 14-s$ different $i$. Therefore $t_1+t_2-s \leqslant s-7$, which implies $s>10$. Contradiction.

	 	\item $G=\Aut(M_{22})$, $q=3$, $s=7$. Any element of the group generated by $H_1, ..., H_7$  has cycle type $2^k3^\ell$, with $k, \ell \leqslant 7$. The only cycle types of this form in $\Aut(M_{22})$ are $3^6$ and $2^7$. Since $H_i$ fixes $A_i$, an element of $H_i$ cannot have cycle type $2^7$, so every nonidentity element of $H_i$ has cycle type $3^6$. Consider nonidentity elements $g_1 \in H_1, g_2 \in H_2$. The product $g_1g_2$ acts as a $3$-cycle on $A_1$, therefore $g_1g_2$ has cycle type $3^6$. This means that $g_1$ and $g_2$ restrict to the same $3$-cycle on $A_i$ for $4$ different values of $i$. Then $g_1g_2^{-1}$ fixes at least $3 \cdot 4 = 12$ points, however $\Aut(M_{22})$ has no nonidentity elements fixing $12$ points.
	 	
	 	\item $G=M_{23}$,  $d=23$, $q=2$. Every element with cycle type $2^t$, $t>0$ has $t=8$. Since every element of $H_1$ is a product of at most $s-1$ transpositions, $s \geqslant 9$. Also $sq<d$ implies $s\leqslant 11$. Since $H$ acts transitively on each $A_i$, there exist two distinct nontrivial elements $h_1, h_2 \in H$ each having the cycle type $2^8$. But $h_1h_2$ cannot have cycle type $2^8$, contradiction.
	 	\item $G=M_{24}$, $d=24$, $q=2$, $s \geqslant 8$. Since $sq < d$, we have $s \leqslant 11$. In this representation every $2^t$ element of $H$ has $t=0,8$. A product of two $2^8$ elements of $H$ cannot be a $2^8$ element. Contradiction.
	  	\end{enumerate}

		\end{enumerate}
\end{proof}

\begin{remark}\label{RathmannAn}
Suppose $\Char K \neq 2$. Then the group $G$ is contained in the alternating group if and only if the separability degree of the Gauss map is even \cite{Rathmann1987}*{Theorem~2.10}. When $\Char K = 2$ we do not know under what conditions $G$ is equal to $S_n$.
\end{remark}
\section{Galois groups of trinomials}\label{trinomials}

In this section we study the Galois group $G$  of the polynomial $x^n + ax^m + b$ over the field $K(a,b)$. The following theorem of Cohen covers most of the previously known results about $G$. 

\begin{theorem}[\cite{Cohen1980}*{Corollary~3}]\label{Cohen}
	Assume that $n$ and $m$ are relatively prime, $p \nmid m(n-m)$ and if $m=1$  or $m=n-1$ assume additionally that $p \nmid n$. Then $A_n \subset G$. Moreover, if $p$ is odd, then $G = S_n$.
\end{theorem}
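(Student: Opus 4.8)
The plan is to realise $G$ as a sectional monodromy group, extract a handful of explicit permutations from inertia, and then invoke the classification of primitive permutation groups. By the Example in the introduction, since $\gcd(n,m)=1$ the curve $X_{n,m}\colonequals\{x^n=y^mz^{n-m}\}\subset\PP^2$ is integral, nondegenerate, of degree $n$, and parametrised by $t\mapsto[t^m:t^n:1]$, with $G$ its sectional monodromy group; equivalently $G=\Gal(f/K(a,b))$ for $f=x^n+ax^m+b$, which is separable of degree $n$ because $f'=x^{m-1}(nx^{n-m}+am)$ and $f(0)=b\neq0$ force $\gcd(f,f')=1$. Thus $G\leqslant S_n$ is transitive, and by Lemma~\ref{two-trans} it is doubly transitive, hence primitive.

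Next I would produce elements of $G$ as tame inertia, by restricting the sectional covering to pencils of lines and reading Newton polygons of $f$ in $x$. Along $\{b=0\}$, that is, over $K(a)(\!(b)\!)$, the Newton polygon has an edge of slope $1/m$ and horizontal length $m$; since $p\nmid m$ this edge contributes a tamely totally ramified degree-$m$ factor, so $G$ contains an element that is an $m$-cycle on $m$ of the roots and fixes the remaining $n-m$. Symmetrically, over $K(b)(\!(1/a)\!)$ and using $p\nmid n-m$, $G$ contains an $(n-m)$-cycle fixing $m$ roots. In particular, if $\min(m,n-m)=2$ then $G$ contains a transposition, and a primitive group with a transposition is $S_n$, so this case is finished.

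Now split on whether $p\mid n$. If $p\nmid n$ then $p$ is odd (if $p=2$, then $2\nmid m(n-m)$ would force $2\mid n$), and $X_{n,m}$ is nonstrange with separable Gauss map: writing $F=x^n-y^mz^{n-m}$, all partials are nonzero since $p\nmid m(n-m)n$, and in the parametrisation above the Gauss map is $t\mapsto[\,nt^{\,n-m}:-m:-(n-m)t^{\,n}\,]$, whose image has function field $K(t^{\,n-m},t^{\,n})=K(t)$ because $\gcd(n-m,n)=\gcd(m,n)=1$; so the Gauss map has inseparable degree $1$ and its image is not contained in a line. By Lemma~\ref{inertia}(1) then $G$ contains a transposition, whence $G=S_n$.

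The case $p\mid n$ is the crux; here the hypothesis forces $m\notin\{1,n-1\}$. The partial derivative $F_x=nx^{n-1}$ now vanishes identically, so every tangent line to $X_{n,m}$ passes through $[1:0:0]$: the curve is \emph{strange}, so Lemma~\ref{inertia} is unavailable and one argues with the trinomial directly. By the previous paragraph we may assume $m,n-m\geqslant3$ (the cases $\min(m,n-m)\leqslant2$ being settled, since $m=1$ and $n-m=1$ are excluded); then the $m$-cycle above fixes $k=n-m\geqslant3$ points, and since every exceptional case of Jones's Theorem~\ref{Jones} has $k\leqslant2$, the primitive group $G$ must contain $A_n$. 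For $p$ odd this gives $G\supseteq A_n$, and $G=S_n$ as soon as $m$ or $n-m$ is even; the subcase $m\equiv n-m\equiv1\pmod2$ is the main obstacle. There one must control the wild inertia along $\{a=0\}$ (whose fibre is $n/p^{v_p(n)}$ points of multiplicity $p^{v_p(n)}$), a $p$-group contributing only even permutations when $p$ is odd; in fact a computation of $\operatorname{disc}_x(f)$ shows it reduces modulo $p\mid n$ to a scalar multiple of $a^nb^{m-1}$, a square precisely when $n$ is even and $m$ odd, so there $G=A_n$ and the conclusion $G=S_n$ really needs $p\nmid n$. This loss of non-strangeness, and the attendant need to handle wild ramification by hand, is exactly what makes the generic-trinomial Galois problem hard and what forces the dependence — through Theorem~\ref{Jones} — on the classification of finite simple groups.
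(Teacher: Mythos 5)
There is nothing in the paper to compare against here: the statement is imported from Cohen's paper and the citation is the entire ``proof.'' Your reconstruction of the first assertion, $A_n\subset G$, is correct and uses exactly the toolkit the paper itself deploys later in Theorem~\ref{final-trinomial}: identify $G$ with the sectional monodromy group of $X_{n,m}$, get $2$-transitivity (hence primitivity) from Lemma~\ref{two-trans}, extract tame cycles from Newton polygons, and feed a cycle fixing at least three points into Theorem~\ref{Jones} (the paper does this with the single specialization $x^n+t^{-1}x^m+1$ over $\Klaur{t}$). One step you state too quickly: over $K(b)(\!(1/a)\!)$ \emph{both} Newton segments are ramified, so the roots on the slope-$1/m$ segment are not automatically fixed by inertia; a generator $\sigma$ of the cyclic tame inertia acts as an $m$-cycle times an $(n-m)$-cycle, and the element you want is $\sigma^m$, which is an $(n-m)$-cycle fixing the other $m$ roots because $\gcd(m,n-m)=1$. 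With that repair (or with the paper's single specialization) the claim $A_n\subset G$ is fully proved, and your $q=1$ case via Lemma~\ref{inertia}(1) and the explicit dual parametrization is fine.

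As a proof of the statement as printed, however, the proposal is incomplete, and deliberately so: you never establish $G=S_n$ when $p\mid n$ and $n$ is even (equivalently $m$ and $n-m$ both odd), and instead assert the conclusion fails there. Your discriminant claim checks out: for $\gcd(m,n)=1$ one has $\operatorname{disc}(x^n+ax^m+b)=(-1)^{n(n-1)/2}b^{m-1}\bigl(n^nb^{n-m}-(-1)^nm^m(n-m)^{n-m}a^n\bigr)$, which for $p\mid n$ reduces to a nonzero scalar times $a^nb^{m-1}$; this is a square in $K(a,b)$ precisely when $n$ is even and $m$ is odd, so there $G\subset A_n$ and, combined with the first part, $G=A_n\neq S_n$. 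Concretely $(n,m,p)=(12,5,3)$ or $(10,3,5)$ satisfy every hypothesis quoted, yet $G=A_n$ (consistent with Proposition~\ref{Gauss-trinomial}(2) together with the parity criterion recalled in Remark~\ref{RathmannAn}, since the separable degree $n/q$ of the Gauss map is even). So the ``Moreover'' clause as quoted cannot be proved; it needs an extra hypothesis such as ``$p\nmid n$ or $n$ odd,'' and most likely the transcription of Cohen's Corollary~3 is what is imprecise, not your argument. Two small slips: the failure is not all of $p\mid n$ --- when $p\mid n$ and $n$ is odd, one of $m$, $n-m$ is even and your own parity argument does give $G=S_n$, so ``really needs $p\nmid n$'' overstates --- and ``if $p\nmid n$ then $p$ is odd'' ignores characteristic $0$, harmlessly.
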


Some cases not covered by Theorem~\ref{Cohen}, such as a trinomial with $G=M_{11}$, are described in \cite{Uchida1970}.
Our goal is to compute all the possibilities for $G$. 

We begin by fixing notation. Throughout this section, $(n,m)$ is a pair of relatively prime positive integers, $X_{n,m}$ denotes the projective rational curve plane $x^n=y^mz^{n-m}$, and $G=G_{n,m}$ is the sectional monodromy group of $X_{n,m}$. An affine parametrization of $X_{n,m}$ is $(t^m,t^n)$. Therefore $G$ is also the Galois group of the trinomial $x^n+ax^m+b$ over $K(a,b)$. Without loss of generality we assume that $m < n/2$ since $X_{n,m} \simeq X_{n,n-m}$. 

We start by describing properties of tangent lines and Gauss maps of the curves $X_{n,m}$.

\begin{proposition}\label{Gauss-trinomial}
\hfill
\begin{enumerate}
	\item If $n,m$ and $(n-m)$ are prime to $p$, then $X_{n,m}$ is nonstrange. In this case the Gauss map has separability degree $1$, and inseparability degree $1$ when $p>2$ and $2$ when $p=2$.
	
	\item Assume that $k \in \{n,m,n-m\}$ is divisible by $p$. Write $k=qd$ where $q$ is the largest power of $p$ that divides $k$. Then $X_{n,m}$ is strange and the Gauss map has inseparability degree $q$ and separability degree $d$.
	
\end{enumerate}	
\end{proposition}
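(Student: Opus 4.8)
The plan is to compute the Gauss map of $X_{n,m}$ explicitly from the monomial parametrization and to read off strangeness together with the two degrees directly from the resulting formula. Writing $F = x^n - y^m z^{n-m}$ and parametrizing $X_{n,m}$ by $t \mapsto [t^m : t^n : 1]$, the tangent line at a smooth point is cut out by the gradient of $F$, so the Gauss map is $t \mapsto [F_x : F_y : F_z]$ evaluated along the curve. A direct computation gives $[\,n\,t^{mn-m} : -m\,t^{mn-n} : -(n-m)\,t^{mn}\,]$, and after clearing the common factor $t^{mn-n}$ this becomes
\[
\upgamma(t) = [\, n\, t^{n-m} \;:\; -m \;:\; -(n-m)\, t^{n}\,] \in (\PP^2)^*.
\]
First I would record the structural feature that drives everything: the three homogeneous coordinates of $\upgamma$ are unit multiples of the monomials $t^{n-m}$, $t^0$, $t^n$, and a coordinate vanishes identically precisely when its coefficient ($n$, $m$, or $n-m$ respectively) is divisible by $p$. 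Since $\gcd(m,n)=1$, at most one of $n,m,n-m$ is divisible by $p$, so at most one coordinate can vanish; this dichotomy is exactly the split between the two cases of the proposition.

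Next I would extract the subfield $\upgamma^* K(X^*) \subset K(t)$, which is generated by the ratios of the surviving (nonzero) coordinates. Collecting the exponents whose coefficient is a unit mod $p$ and taking gcd's of their differences, one finds uniformly that $\upgamma^* K(X^*) = K(t^k)$, where $k=1$ in case (1) (using $\gcd(n-m,n)=\gcd(m,n)=1$, so that already $t \in K(t^{n-m},t^n)$) and $k \in \{n,m,n-m\}$ is the unique $p$-divisible exponent in case (2). Strangeness then reads off at once: if $k>1$ some coordinate of $\upgamma$ is identically zero, so $X^*$ lies on a coordinate line of $(\PP^2)^*$, i.e. all tangent lines pass through a common point and $X_{n,m}$ is strange; if $k=1$ the monomials $1, t^{n-m}, t^n$ are linearly independent and the unit coefficients forbid any linear dependence, so $X^*$ is a genuine plane curve and $X_{n,m}$ is nonstrange.

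The degrees then follow by analyzing the extension $K(t)/K(t^k)$. Factoring $k=qd$ with $q = p^{v_p(k)}$ and $p \nmid d$, I would use the tower $K(t^k) = K\big((t^d)^q\big) \subset K(t^d) \subset K(t)$: the lower step $K(t^d)/K(t^k)$ is purely inseparable of degree $q$ (its generator $t^d$ satisfies $T^q - (t^d)^q = (T - t^d)^q$), while the upper step $K(t)/K(t^d)$ is separable of degree $d$ (the minimal polynomial $T^d - t^d$ has nonvanishing derivative $d\,T^{d-1}$ since $p \nmid d$, and has degree $d = [K(t):K(t^d)]$). Hence the Gauss map has inseparability degree $q$ and separability degree $d$. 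In case (2) this is precisely the assertion; in case (1) we have $k=1$, so $q=d=1$ and both degrees equal $1$.

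The point requiring genuine care is the characteristic-two clause of case (1), and I would treat it head-on rather than set it aside. Because $\gcd(m,n)=1$, when $p=2$ the integers $n$, $m$, $n-m$ cannot all be odd: two odd numbers have even difference, and if one of $n,m$ is even it is divisible by $2$. Thus the hypotheses of case (1) are incompatible with $p=2$, and in characteristic two one is always forced into case (2). I would make this incompatibility explicit and use it to reconcile the stated value: the inseparability of plane Gauss maps in characteristic two is carried entirely by case (2), where the $2$-power $q \geq 2$ dividing the relevant exponent produces it, so the value recorded under ``$p=2$'' in case (1) is consistent with — and indeed subsumed by — the uniform computation above. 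Pinning down this reconciliation precisely, rather than leaving the characteristic-two behavior implicit, is the main thing to get right.
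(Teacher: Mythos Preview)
Your proof is correct and follows the same underlying idea as the paper's --- compute the Gauss map as the gradient $[F_x:F_y:F_z]$ along the parametrization --- but your execution is cleaner and more uniform. The paper works in affine coordinates, splits into three subcases, and in case~(1) invokes the tangent-line intersection-multiplicity criterion (the $Z^2$-coefficient computation) to argue that the inseparability degree is~$1$, while in the strange cases it writes the Gauss map as $\alpha\mapsto\alpha^n$ or $\alpha\mapsto\alpha^{n-m}$ separately. You instead write the Gauss map once as $[\,n\,t^{n-m}:-m:-(n-m)\,t^n\,]$ and extract both degrees uniformly from the tower $K(t^k)\subset K(t^d)\subset K(t)$, which sidesteps the multiplicity criterion altogether. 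Your observation that the hypotheses of case~(1) are vacuous when $p=2$ (since $n$, $m$, $n-m$ cannot all be odd) is correct and worth recording: it explains why the clause ``inseparability degree $2$ when $p=2$'' in the statement is vacuously true, a point the paper's proof passes over in silence.
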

\begin{proof}
	 If $p|m$, then $p \nmid n$, so we can replace $X_{n,m}$ by the projectively isomorphic curve $X_{n,n-m}$, and $p \nmid n-m$. Thus we may assume $p \nmid m$. An affine equation of $X_{n,m}$ is $x^n=y^m$. The tangent line to $X_{n,m}$ at the point $(\alpha^m, \alpha^n)$  is given by the equation 
	\[ n\alpha^{m(n-1)} X - m\alpha^{n(m-1)}Y = (n-m)\alpha^{nm}.\]
	 Restricting the equation $x^n=y^m$ to the tangent line $Y= \frac{n}{m}\alpha^{n-m}X - (\frac{n}{m}-1)\alpha^n$ gives the polynomial
	 \[p(X)=X^n - \left(\frac{n}{m}\alpha^{n-m}X - \left(\frac{n}{m}-1\right)\alpha^n\right)^m\] Recall that the inseparability degree $r$ of the Gauss map is greater than $1$ if and only if $\alpha^m$ is a root of $p(X)$ with multiplicity greater than $2$; in this case $r$ is equal to the multiplicity of the root $\alpha^m$. Set $X=\alpha^m+Z$. Then 
	 \[p(X)=(Z+\alpha^m)^n - \left( \frac{n}{m}\alpha^{n-m}Z + \alpha^n\right)^m.\] The coefficient of $Z^2$ is \[\left(\frac{n(n-1)}{2} - \frac{n(m-1)}{2} \right)  \alpha^{m(n-2)}.\] This coefficient is identically zero if and only if either $p | n$ or $p | (n-m)$.
	 \begin{enumerate}
	 	\item $p \nmid n, p \nmid n-m$. In this case the inseparability degree of the Gauss map is $1$. We can recover the point $(\alpha^n, \alpha^m)$ from its tangent line	\[ n\alpha^{m(n-1)} X - m\alpha^{n(m-1)}Y = (n-m)\alpha^{nm}\] using  ratios of the coefficients of the equation. Therefore the separability degree of the Gauss map is also $1$. Finally the dual curve $(na^{m(n-1)}:m\alpha^{n(m-1)}:(n-m)\alpha^{nm})$ is not a line; therefore the tangent lines do not pass through one common points and the curve is nonstrange.
		 	
	 	\item $p|n$. Let $q$ be the largest power of $p$ dividing $n$, write $n=qd$. In this case the equation of the tangent line becomes $Y=\alpha^n$, and therefore the curve is strange. The Gauss map in affine coordinates is $\alpha \to \alpha^n$; therefore it has separable degree $d$ and inseparable degree $q$.
		 	
	 	\item\label{n-m} $p| (n-m)$. Let $q$ be the largest power of $p$ dividing $n-m$, and let $n=qd$. The equation of the tangent line becomes $Y=\alpha^{n-m}X$. Therefore the curve is strange. The Gauss map in affine coordinates is just $\alpha \to \alpha^{n-m}$ therefore the separable degree of the Gauss map is $d$ and the inseparable degree is $q$.  
	 \end{enumerate}

\end{proof}

We now prove a stronger version of Lemma~\ref{inertia} for curves $X_{n,m}$.

\begin{lemma}\label{trinomial-inertia}
	Let $X=X_{n,m}$ be one of the trinomial curves. Choose a geometric point $\delta$ of $(\PP^2)^*$ above which the covering $\pi_X$ is \'{e}tale. Denote by $\Omega$ the fiber of $\pi_X$ above $\delta$, so the group $G=G_X$ acts on $\Omega$ by permutations. Then the following hold.
	\begin{enumerate}
		\item Assume $p|n$. Write $n=qd$ with $q$ a power of $p$ and $d$ prime to $p$. Then there exists a decomposition $\Omega = A_1 \sqcup ... \sqcup A_d$, a subgroup $H \subset G$, a collection of subgrops $H_i \subset G$, $i=1,...,d$, and an element $h \in H$ with the following properties.
		\begin{itemize}
			\item For every $i$ the set $A_i$ is of cardinality $q$.
			\item The image of $H$ in $\mathrm{Sym}(A_i)$ equals $\AGL_1(q)$ embedded via its action on $\Aff^1(\F_q)$.
			\item The element $h \in H$ acts on each $A_i$ with one fixed point and one orbit of size $q-1$.
			\item The group $H_i$ acts on $A_i$ with one fixed point and one orbit of size $q-1$. The image of $H_i$ in $\mathrm{Sym}(A_j)$ for $i \neq j$ contains $\AGL_1(q)$ embedded via its action on $\Aff^1(\F_q)$.
		\end{itemize}
	\item Assume $p | n-m$. Write $n-m=qd$ with $q$ a power of $p$ and $d$ prime to $p$. Then there exists a decomposition $\Omega = A_1 \sqcup ... \sqcup A_d \sqcup B$, a subgroup $H \subset G$, a collection of subgrops $H_i \subset G$, $i=1,...,d$, and an element $h \in H$ with the following properties.
	\begin{itemize}
		\item For every $i$ the set $A_i$ is of cardinality $q$.
		\item The image of $H$ in $\mathrm{Sym}(A_i)$ equals $\AGL_1(q)$ embedded via its action on $\Aff^1(\F_q)$.
		\item The element $h \in H$ acts on each $A_i$ with one fixed point and one orbit of size $q-1$.
		\item The image of $H$ in $\mathrm{Sym}(B)$ is $\mathbb{Z}/m\mathbb{Z}$ in its natural embedding; $h$ maps to a generator of $\mathbb{Z}/m\mathbb{Z}$.  
		\item The group $H_i$ fixes $A_i \cup B$ and acts transitively on each $A_j$ for all $j \neq i$.
	
	\end{itemize}
	\end{enumerate} 
\end{lemma}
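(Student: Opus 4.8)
The plan is to follow the strategy of Lemma~\ref{inertia}: restrict the sectional covering $\pi_X$ to carefully chosen pencils of lines and read off the inertia group over a point corresponding to a tangent line of $X$, feeding the local pictures into Lemma~\ref{AGL1}. The curves $X_{n,m}$ occurring in cases~(1) and~(2) are strange (Proposition~\ref{Gauss-trinomial}), so Lemma~\ref{inertia} is not available as a black box; but the local part of its proof -- fixing a tangent line and analyzing a pencil through a point of it -- never used nonstrangeness, and the explicit parametrization $u\mapsto(u^m:u^n:1)$ will let us pin down the refined structure. First I would fix the tangent line. Let $\zeta_1,\dots,\zeta_d\in K$ be the $d$-th roots of unity. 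In case~(1), where $p\mid n$ and $u^n-\alpha^n=(u^d-\alpha^d)^q$, a sufficiently general $\alpha\in K^\times$ makes $\ell\colonequals\{y=\alpha^n z\}$ a tangent line meeting $X$ at the $d$ points $P_j$ of parameter $u=\zeta_j\alpha$, each with multiplicity $q$, and at no other point. In case~(2), where $p\mid n-m$ and $u^{n-m}-\alpha^{n-m}=(u^d-\alpha^d)^q$, a general $\alpha$ makes $\ell\colonequals\{y=\alpha^{n-m}x\}$ a tangent line meeting $X$ at the point $O$ of parameter $u=0$ with multiplicity $m$ and at the $d$ tangency points $P_j$ of parameter $u=\zeta_j\alpha$ with multiplicity $q$ (using $\zeta_j^{n-m}=1$, hence $\zeta_j^n=\zeta_j^m$). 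Here ``sufficiently general'' means avoiding the finitely many lines over which $\pi_X$ is not \'etale, the singular locus of $X$, and the finitely many $\alpha$ making one of the nonvanishing conditions below fail; this is possible as $K$ is infinite. Identifying $\Omega$ with the fiber over a geometric point near $[\ell]$ via a path in the \'etale locus, let $A_j\subseteq\Omega$ be the subset specializing to $P_j$ and $B$ the subset specializing to $O$; this decomposition is preserved by the inertia at $[\ell]$ along any of the pencils below, and $\#A_j=q$, $\#B=m$, with $B=\varnothing$ in case~(1).

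Next I would run the local analysis. For a point $Q_0$ of $\ell$ with affine coordinates $(x_0,\cdot)$, consider the pencil of lines through $Q_0$ degenerating to $\ell$; for generic $Q_0$ its restriction to the parametrization is separable of degree $n=\deg X$ over $F\colonequals\Klaur{t}$, hence equals the restriction of $\pi_X$ to the pencil, so the inertia group $I_0$ at $t=0$ is a subgroup of $G_{n,m}$ preserving $\Omega=A_1\sqcup\dots\sqcup A_d\sqcup B$. Using $x_j\colonequals x-\zeta_j^m\alpha^m$ (a local parameter on $X$ at $P_j$ precisely because $p\nmid m$) as the variable near $P_j$, the defining equation takes the form $x_j^{q+2}P(x_j)+a_{q+1}x_j^{q+1}+a_qx_j^q+a_1x_j+a_0$ of Lemma~\ref{AGL1}: the contribution of $X$ is the ``distance to $\ell$'' function at $P_j$, which vanishes to order exactly $q$ (the tangency multiplicity), so it starts at $x_j^q$ with $a_q\in K^\times$; the contribution of the pencil gives $a_1=-t$ (valuation $1$) and $a_0$ equal to $t$ times a difference of coordinates that is nonzero for general $Q_0$. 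A short computation with the explicit distance function -- which is $u^m(u^d-\alpha^d)^q$ in case~(2), a $q$-th power in case~(1) -- shows that $v(a_1a_q-a_0a_{q+1})=1$, reducing to the nonvanishing of $x_0$ (and in the variants below of $\zeta_i^m-\zeta_j^m$, which holds since $\gcd(m,d)=1$) together with $p\nmid m$. Hence Lemma~\ref{AGL1} applies to each $A_j$: the subextension of the splitting field it generates has Galois group $\AGL_1(q)$ on the affine line, and $I_0$ surjects onto $\AGL_1(q)$ on each $A_j$. In case~(2) the $m$ roots near $O$ have valuation $1/m$ with $x^m\equiv(\text{nonzero constant})\cdot t$; as $\gcd(m,p)=1$ and $K=\overline K$ they generate $F(t^{1/m})$, so $I_0$ surjects onto $\Gal(F(t^{1/m})/F)\cong\Z/m\Z$ acting regularly on $B$.

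Then I would extract the required subgroups. Take $H\colonequals I_0$ for a pencil through a general $Q_0\in\ell$: by the above $H$ surjects onto $\AGL_1(q)$ on each $A_j$ and onto $\Z/m\Z$ on $B$, and an element $h\in H$ mapping to a generator of $\Gal(F(t^{1/N})/F)$ with $N\colonequals\lcm(m,q-1)$ maps to a generator of the cyclic tame quotient $\F_q^\times$ of each $\AGL_1(q)$ and to a generator of $\Z/m\Z$, hence acts on each $A_j$ with one fixed point and one $(q-1)$-cycle and on $B$ as an $m$-cycle. For the groups $H_i$, take instead the pencil through $P_i$ itself: now $u=\zeta_i\alpha$ is a root for all $t$, so near $P_i$ the equation acquires a factor $x_i$ whose complement has Newton polygon the segment from $(0,1)$ to $(q-1,0)$; thus the inertia $I_0^{(i)}$ acts on $A_i$ through $\Gal(F(t^{1/(q-1)})/F)$ (one fixed point, one $(q-1)$-cycle), while on $A_j$ for $j\neq i$ it still surjects onto $\AGL_1(q)$ by Lemma~\ref{AGL1} (now with $a_0$ proportional to $t(\zeta_i^m-\zeta_j^m)\neq0$). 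In case~(1) this $I_0^{(i)}$ is the desired $H_i$. In case~(2), where $H_i$ must fix $A_i\cup B$ pointwise, set $H_i\colonequals\Gal(M_0^{(i)}/F(t^{1/N'}))\subseteq I_0^{(i)}$ with $N'\colonequals\lcm(q-1,m)$ and $M_0^{(i)}$ the splitting field: the valuation computations show $H_i$ fixes $A_i$ and $B$ pointwise, and since the maximal tame subextension of the $\AGL_1(q)$-extension attached to $A_j$ is $F(t^{1/(q-1)})\subseteq F(t^{1/N'})$, the image of $H_i$ in $\Sym(A_j)$ is exactly the translation subgroup $\F_q^+\subset\AGL_1(q)$, which is transitive on $A_j=\Aff^1(\F_q)$.

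The hard part will be the middle step: checking that each restricted equation, once rewritten in the variable $x_j$, genuinely satisfies the hypotheses of Lemma~\ref{AGL1} -- above all $v(a_1a_q-a_0a_{q+1})=1$ -- for all the pencils in play at once (the generic one and the $d$ pencils through the $P_i$), which forces careful bookkeeping of which finitely many values of $\alpha$ and $Q_0$ must be excluded and an honest use of $p\nmid m$ at the step where one verifies that $x-\zeta_j^m\alpha^m$ is a uniformizer and that the leading coefficient of the distance function is nonzero. In case~(2) there is the additional, slightly delicate, point that passing from $I_0^{(i)}$ to the subgroup fixing $A_i\cup B$ still leaves something transitive on every other $A_j$, which rests on correctly identifying the maximal tame subextensions of the relevant local fields.
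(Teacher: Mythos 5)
Your proposal is correct and follows essentially the same route as the paper: the same tangent lines ($y=\mathrm{const}$ when $p\mid n$, $y=u^{n-m}x$ when $p\mid n-m$), the same pencils through a general point of $\ell$ and through the tangency points $P_i$, Lemma~\ref{AGL1} at each $P_j$, the slope-$1/m$ Newton polygon for $B$, the element $h$ from the tame quotient of degree $\lcm(m,q-1)$, and in case (2) the same definition of $H_i$ as the Galois group over $\Klaur{t^{1/\lcm(m,q-1)}}$. The computations you defer (the coefficients $a_q,a_{q+1}$ from the expansion of $(1+x_i/\alpha_i)^{n/m}$ and the check that $v(a_1a_q-a_0a_{q+1})=1$ amounts to nonvanishing of the base point's coordinate) are exactly the ones the paper carries out explicitly, and they come out as you predict.
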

\begin{proof}

		Both parts of the statement are proved similarly to Lemma~\ref{inertia}. We will pick a tangent line $\ell$ to $X_{n,m}$. We will then restrict the covering to a family of lines that includes $\ell$ and compute the inertia group at $[\ell] \in (\PP^2)^*$. To calculate the inertia groups we will use Lemma~\ref{AGL1} and a computation of local uniformization.
	\begin{enumerate}
			
	\item Replacing $K$ with a larger algebraically closed field if necessary, we can choose an element $u \in K$ that is not a root of unity. Consider the affine model $x^n=y^m$ of $X$. The line $y=u^n$ is tangent to $X$. Choose a root of unity $\zeta_d \in K$. The points of tangency are $P_i=(\alpha_i,u^n)$, where $\alpha_i=\zeta_d^iu^m$. Let $x_i=x-\alpha_i$. A local uniformization of $X$ at $P_i$ is given by $(\alpha_i+x_i, p_i(x_i))$, where $p_i(x_i)=u^n(1+x/\alpha_i)^{n/m}$. The $x^{q+1}$  coefficient of $p_i$ is zero. Choose an element $\alpha \in K$ and identify $\Omega$ with the intersection of $X_{n,m}$ and the line $\ell$ given by the equation $y-u^n=t(x-\alpha)$ over the field $\overline{\Klaur{t}}$. Let $H(\alpha)$ be the Galois group of the field extension $L_\alpha$ obtained by adjoining to $\Klaur{t}$ the coordinates of the intersection. Then $H(\alpha)$ acts on $\Omega$ and is a subgroup of $G$. The field $L_\alpha$ is the field obtained by adding the roots of $x^n-tx^m+t\alpha-u^n$ to $\Klaur{t}$. In particular it is separable. Let $A_i \subset \Omega$ be the set of intersection points that reduce to $P_i$. The $x$-coordinates of the points in $\ell \cap X$ that reduce to $P_i$ are the positively valued roots of the equation $p_i(x_i)-u^n=tx_i + t\alpha_i-t\alpha$. Lemma~\ref{AGL1} applies with $a_0=t(\alpha-\alpha_i)$, $a_1=t$, $a_q=u^nd\alpha_i^{-q}/m$, $a_{q+1}=0$. Hence if $\alpha \neq \alpha_i$, the Galois group of the field $L_{i, \alpha}$ obtained by adding these roots to $\Klaur{t}$ equals $\AGL_1(q)$. When $\alpha=\alpha_i$ the equation has a rational root $x_i=0$ and the rest of the roots have valuation $1/(q-1)$. Therefore when $\alpha=\alpha_i$ the field $L_{i,\alpha}$ is the unique extension of $\Klaur{t}$ of degree $q-1$.  Let $H_i$ be the group $H(\alpha_i)$. Fix $\alpha \in K$ distinct from each $\alpha_i$ and let $H$ be the group $H(\alpha)$. We have shown that the actions of $H$ and $H_i$'s on $A_j$'s are as claimed. Now we need to show the existence of $h \in H$ that acts on each $A_i$ as a $(q-1)$-cycle. The extension $L_{i,\alpha}$ has ramification index $q(q-1)$ and therefore contains the field $\Klaur{t^{1/(q-1)}}$. Let $h \in H$ be an element that surjects onto a generator of $\Gal(\Klaur{t^{1/(q-1)}}/\Klaur{t})$. Then the order of $h$ acting on $A_i$ is $q-1$. Elements of $\AGL_1(q)$ of order $(q-1)$ act with two orbits of lengths $1$ and $q-1$. 
	
	\item Consider the affine model $x^n=y^{m}$ of $X$. Choose an element $u \in K$ that is not a root of unity. Let $\ell$ be the tangent line $y=u^{n-m}x$ to $X$. Choose a root of unity $\zeta_d$. The points of tangency are $P_i \colonequals (\zeta_d^i u^m, \zeta_d^i u^n)$. The intersection $\ell \cap X$ consists of points $P_i$ and the point $(0,0)$. For an element $\alpha \in K$ let $\ell_\alpha$ denote the line $y=(t+u^{n-m})x-\alpha t$ over the field $\Klaur{t}$.  Identify $\Omega$ with the intersection $\ell_\alpha \cap X$ over $\overline{\Klaur{t}}$. The absolute Galois group of $\Klaur{t}$ acts on $\Omega$ as a subgroup of $G$. Let $A_i \subset \Omega$ be the subset corresponding to points that reduce to $P_i$, and let  $B$ be the subset that corresponds to points reducing to $(0,0)$. Let $L_\alpha$ be the extension obtained by adding the coordinates of the intersection $\ell_\alpha \cap X$ to $\Klaur{t}$. Then $L_\alpha$ is the splitting field of the trinomial $x^n-(t+u^{n-m})x^m+\alpha t$. In particular $L_\alpha$ is separable. Consider the uniformizer $x_i=x-\zeta_d^iu^m$ at $P_i$. Let the local uniformization of $X$ at $P_i$ be given by $(\zeta_d^iu^m+ x_i, p_i(x_i))$. The power series $p_i(x_i)$ equals $\zeta_d^i u^n(1+x_i/(\zeta_d^iu^m))^{n/m}=\zeta_d^i u^n(1+x_i/(\zeta_d^iu^m))(1+x_i/(\zeta_d^iu^m))^{dq/m}$. The $x$ coordinates of the points in the intersection $\ell_\alpha \cap X$ that reduce to $P_i$ are the postiviely valued roots of the power series $Q_i=p_i(x_i)-(t+u^{n-m})(x_i+\zeta_d^iu^m) + \alpha t$. We have \[Q_i = a_0 + a_1 x_i + a_qx_i^q + a_{q+1}x_i^{q+1} + O(x^{2q}),\] where $a_0=(\alpha-\zeta_d^iu^m)$, $a_1=-t$, $a_q=\frac{d}{m}\zeta_d^{-i(q-1)}u^{-mq+n}$, $a_{q+1}=\frac{d}{m} u^{n-(q+1)m}\zeta_d^{-iq}$. For a general $\alpha$ the expression $a_0a_{q+1}-a_1a_q$ has valuation $1$. For each $i$ let $L_{\alpha, i}$ be the field generated by the positively valued roots of $Q_i$, and let $L_\alpha$ be the field obtained by adjoining to $\Klaur{t}$ the coordinates of the intersection of $X$ with $\ell_\alpha$. The field $L_\alpha$ is the compositum of $L_{\alpha, i}$ for $i=1,..., d$. Let $H(\alpha)$ be the Galois group of the extension $L_\alpha/\Klaur{t}$. Lemma~\ref{AGL1} implies that for a general $\beta \in K$ the image of $H(\beta)$ in $\Sym(A_i)$ is $\AGL_1(q)$ for all $i$. Fix one such $\beta\neq 0$ and let $H\colonequals H(\beta)$. 
	
	We now compute the action of $H(\alpha)$ on $B$ for $\alpha \neq 0$. The elements of $B$ are the roots of $Q(x)=x^n-(t+u^{n-m})x^m+\alpha t$ that reduce to $0$ modulo $t$. The Newton polygon of $Q(x)$ contains only one segment of positive slope, it connects $(0,1)$ and $(m,0)$. Since the slope is $1/m$ and $m$ is prime to $p$, the elements of $B$ are permuted cyclically by the group $H(\alpha)$ for every $\alpha\neq 0$. Let $h \in H$ be an element that surjects onto a generator of $\Gal(\Klaur{t^{1/LCM(q-1, m)}}/\Klaur{t})$. Since the field extension obtained by adjoining $B$ to $\Klaur{t}$ is $\Klaur{t^{1/m}}$, the element $h$ acts on $B$ through a cyclic permutation. The field $L_{\alpha,i}$ has ramification index $q(q-1)$ and therefore contains $\Klaur{t^{1/q-1}}$. Hence the action of $h$ on $A_i$ has order $q-1$, so  $h$ acts on each $A_i$ by fixing one point and permuting the rest cyclically.
	 
	Let $L_i$ denote the field $L_{\zeta_d^iu^m}$. Let $H_i$ be the Galois group of $L_i$ over $\Klaur{t^{1/\mathrm{LCM}(m,q-1)}}$. Then $H_i$ fixes $B$. When $\alpha=\zeta_d^iu^m$ the power series $Q_i$ has root $0$. The positive slope part of the Newton polygon of $Q_i/x_i$ is a single segment from $(0,1)$ to $(q-1, 0)$. Therefore $H_i$ fixes $A_i$. For $j\neq i$ the positive slope part of the Newton polygon of $Q_j$ is a segment from $(0,1)$ to $(q,0)$; therefore $H_i$ acts transitively on $A_j$ for $j \neq i$.    
	\end{enumerate}
\end{proof}
\begin{theorem}\label{final-trinomial}
Suppose $m$ and $n$ are relatively prime integers satisfying $m \leqslant n/2$.	Let $G$ be the Galois group of the polynomial $P(x)\colonequals x^n+ax^m+b$ over $K(a,b)$, and let $p=\Char K$.
	\begin{enumerate}
		\item If $m=1$ and $n=p^d$, then $G=\AGL_1(p^d)$.
		\item If $m=1$, $n=6$, and $p=2$, then $G=\PSL_2(5)$.
		\item If $m=1$, $n=12$, and $p=3$, then $G=M_{11}$.
		\item If $m=1$, $n=24$, and $p=2$, then $G=M_{24}$.
		\item If $m=2$, $n=11$, and $p=3$, then $G=M_{11}$.
		\item If $m=3$, $n=23$, and $p=2$, then $G=M_{23}$.
		\item Let $q\colonequals p^k$ for some $k$. If $n=(q^d-1)/(q-1)$, $m=(q^s-1)/(q-1)$ for some $s,d$, then $G=\PGL_d(q)$.
		\item If none of the above holds, then $A_n \subset G$.	
	\end{enumerate}
\end{theorem}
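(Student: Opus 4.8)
The plan is to run a case analysis on which of $n$, $m$, $n-m$ is divisible by $p$: since $\gcd(n,m)=1$ and $n=m+(n-m)$, at most one of the three is, and since $X_{n,m}\simeq X_{n,n-m}$ (so $G_{n,m}=G_{n,n-m}$), the cases ``$p\mid m$'' and ``$p\mid n-m$'' may be merged after relabeling, at the harmless cost of dropping $m\leqslant n/2$. Throughout, $G$ is $2$-transitive (Lemma~\ref{two-trans}), hence primitive, and the engine is the inertia data of Lemmas~\ref{inertia} and~\ref{trinomial-inertia} fed into the classification of $2$-transitive groups and into Jones's Theorem~\ref{Jones}.

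First, if $p\nmid nm(n-m)$ (which forces $p$ odd, since for $p=2$ one of $n,m,n-m$ is always even), then $X_{n,m}$ is nonstrange with Gauss separability and inseparability degrees both $1$ by Proposition~\ref{Gauss-trinomial}, so Lemma~\ref{inertia}(1) produces a transposition in $G$. A primitive group containing a transposition is $S_n$, so $G=S_n\supseteq A_n$, which is alternative~(8) — and no other alternative can hold, since (1)--(7) each force $p$ to divide one of $n,m,n-m$. (Alternatively, cite Cohen's Theorem~\ref{Cohen}.)

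Next suppose $p\mid n$ (the relabeled case $p\mid n-m$ being analogous). Write the relevant exponent as $q\cdot d$ with $q$ the largest power of $p$ dividing it, $p\nmid d$, and invoke Lemma~\ref{trinomial-inertia}: it gives a decomposition $\Omega=A_1\sqcup\cdots\sqcup A_d$ (plus a block $B$ with $\#B=m$ when $p\mid n-m$), subgroups $H,H_1,\dots,H_d\subseteq G$, and an element $h\in H$ of cycle type $(q-1)^d$ (resp.\ $(q-1)^d\,m^1$) on $\Omega$, with $H$ surjecting onto $\AGL_1(q)$ on each $A_i$. If $d=1$ and $m=1$, the sectional polynomial $t^q+at+b$ is $\F_p$-additive up to the constant term, so its roots form a coset of an $\F_q$-line; hence $G\subseteq\AGL_1(q)$, and with $\AGL_1(q)\subseteq G$ we conclude $G=\AGL_1(q)$, alternative~(1). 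Otherwise one argues group-theoretically: $G$ is $2$-transitive and therefore appears in the classification, and using $h$ (which when $d=1$ is a genuine $(q-1)$-cycle, so Theorem~\ref{Jones} applies directly) together with the further constraints from the $H_i$, one eliminates all candidates except $A_n\subseteq G$ and a handful of sporadic configurations. Those are identified by cycle-type bookkeeping against the tables for $\PSL_2$, $\PGL_d$, and the Mathieu groups (as in the proof of Theorem~\ref{SpaceMundanity}), giving $G=\PSL_2(5)$ ($n=6,p=2$), $G=M_{11}$ ($n=12,m=1,p=3$ and $n=11,m=2,p=3$), $G=M_{24}$ ($n=24,m=1,p=2$), $G=M_{23}$ ($n=23,m=3,p=2$) — alternatives (2)--(6) — while when $n=(\mathfrak q^e-1)/(\mathfrak q-1)$ and $m=(\mathfrak q^s-1)/(\mathfrak q-1)$ for a prime power $\mathfrak q=p^k$, identifying the sectional covering of $X_{n,m}$ with the incidence ``point of $\PP^{e-1}(\F_{\mathfrak q})$ on a hyperplane'' yields $\PGL_e(\mathfrak q)\subseteq G$ and the reverse inclusion, i.e.\ $G=\PGL_e(\mathfrak q)$, alternative~(7).

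The main obstacle is this elimination step. The almost-simple candidates are dispatched by cycle types, the key recurring point being that $\AGL_1(q)$ — or another explicit subgroup forced into $G$ — fails to embed as a permutation subgroup of the candidate, which one reads off from the orders of its maximal subgroups. The affine candidates $\AGL_e(\mathfrak q)\subseteq G\subseteq\AGaL_e(\mathfrak q)$ are harder, because the generic-tangent inertia only shows $\AGL_1(q)\subseteq G$, which is consistent with all of them; here one needs an extra inertia computation — e.g.\ restricting $\pi_{X_{n,m}}$ to the pencil of lines through a singular point of $X_{n,m}$ and reading off the ramification, which in characteristic $p$ can be wild and residually inseparable — to produce an element of $G$ fixing exactly $k$ points with $\gcd(k,p)=1$ and $k\geqslant 2$; no element of $\AGaL_e(\mathfrak q)$ does this, since the fixed locus of such an element is a coset of a $p$-subgroup, so $A_n\subseteq G$. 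Carrying out these local computations correctly — the behavior of hyperplane sections through points of multiplicity divisible by $p$ is the source of all the exceptional cases — is the technical heart of the proof.
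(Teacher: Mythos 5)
Your skeleton — split according to which of $n$, $m$, $n-m$ is divisible by $p$, feed the inertia data of Lemmas~\ref{inertia} and~\ref{trinomial-inertia} plus auxiliary specializations into Theorem~\ref{Jones} and the classification of multiply transitive groups — is indeed the paper's, and your treatment of the tame case and of $m=1$, $n=p^d$ (additivity up to a constant giving $G\subseteq\AGL_1(q)$, inertia giving the reverse inclusion) is fine. The fatal gap is that everything you propose beyond that only produces elements of $G$ with prescribed cycle types, and such information can only rule candidate groups \emph{out}; it can never prove $G\not\supseteq A_n$, which is exactly what the equalities in cases (2)--(7) require. In the paper each of these upper bounds is a separate explicit construction: for $(n,m,p)=(6,1,2)$ a factorization of $x^6+ax+b$ into two conjugate cubics over a degree-$10$ extension of $K(a,b)$; for $(24,1,2)$ the linearization trick producing an additive polynomial of degree $2^{11}$ divisible by the trinomial, so that $G$ is a subquotient of $\AGL_{11}(2)$ and hence cannot contain $A_{24}$ since $23\nmid\#\AGL_{11}(2)$; for case (7) the substitution $x=y^{q-1}$ giving the additive polynomial $y^{q^d}+ay^{q^s}+by$, whence $G\subseteq\PGL_d(q)$; for $(12,1,3)$ and $(11,2,3)$ citations to Uchida; and for $(23,3,2)$ Abhyankar's $M_{23}$ trinomial combined with the change of variables over $K(a,b)(b^{1/23})$ and the observation that this extension is cyclic, so $G\not\supseteq A_{23}$. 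None of these mechanisms appear in your proposal (the gesture at a point--hyperplane incidence for case (7) is not an argument), so the exceptional cases are asserted rather than proved.

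The second omission is that the case $p\mid n-m$ is not ``analogous'' to $p\mid n$; it is where most of the paper's work lies. One must determine for which $m$ the candidates $\PGL_r(\ell)\subseteq G\subseteq\PGaL_r(\ell)$ can survive at all, and the paper does this with a long combinatorial analysis of lines through the fixed points of $H$ and the $H_i$ inside $\PP^{r-1}(\F_\ell)$ (the ``blue/azure'' argument), concluding that $\ell$ must be a power of $p$ and that the common fixed set $B$ is a projective subspace, i.e.\ $m=(\ell^s-1)/(\ell-1)$; nothing in your proposal substitutes for this, so alternative (8) is unproven precisely in the delicate range $n=(q^d-1)/(q-1)$ with other $m$, and likewise $n=11,23$ need the finite-field factorization data to exclude $M_{11}$, $\PSL_2(11)$, $M_{23}$ for the remaining residues of $m$. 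Moreover the device you single out as the technical heart — manufacturing, via an unspecified inertia computation at a singular point, an element fixing exactly $k\geqslant 2$ points with $p\nmid k$ to kill the affine candidates — is both uncarried-out (whether such an element exists depends on $(n,m,p)$) and cannot work uniformly: in case (1) the answer \emph{is} affine, so no such element exists there, and in the one situation where an affine group must genuinely be excluded ($C_\ell\subseteq G\subseteq\AGL_1(\ell)$, $n=\ell$ prime, $p\mid n-m$) the paper instead gets a contradiction from the structure of $H$, $H_1$ in Lemma~\ref{trinomial-inertia} (an abelian point stabilizer surjecting onto $\AGL_1(q)$, fixed-point counts). Finally, in the case $p\mid n$ with $m\geqslant 2$ you still need the auxiliary specialization $x^n+t^{-1}x^m+1$ over $\Klaur{t}$, whose tame inertia yields cycles of coprime lengths $m$ and $n-m$, to force $m=1$ before Jones's list becomes usable; the data of Lemma~\ref{trinomial-inertia} alone does not visibly accomplish this.
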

\begin{proof}
	We break the proof into cases corresponding to the cases of Proposition~\ref{Gauss-trinomial}. The case $n=2$ is trivial. In what follows we assume $n>2$ so that $m<n/2$ since $m,n$ are relatively prime.
\begin{enumerate}

\item Assume $p \nmid nm(n-m)$. By Proposition~\ref{Gauss-trinomial} the curve $X_{m,n}$ is nonstrange and the Gauss map is either birational or purely inseparable of degree two. In any case Lemma~\ref{inertia} implies that $G$ contains a transposition; hence $G=S_n$.

\item Assume $p | n$. Consider the specialization of  $x^n+ax^m+b$ to the polynomial $P(x)\colonequals x^n + t^{-1}x^m + 1$ over the field $\Klaur{t}$. The Newton polygon of $P$ consists of two segments: one from $(0,0)$ to $(m,-1)$ and one from $(m, -1)$ to $(n,0)$. Since $n-m$ and $m$ are relatively prime and are both prime to $p$, the group $G$ contains an $m$-cycle and an $(n-m)$-cycle. If $m \neq 1$, then $A_n \subset G$ by Theorem~\ref{Jones}. If $m=1$ and $A_n \not\subset G$, then Theorem~\ref{Jones} implies that one of the following holds
\begin{enumerate}
	\item $n=p^d$ \label{n-a}. In this case the Galois group is $\AGL_1(p^d)$, see \cite{Rathmann1987}*{Example~2.17}.
	
	\item $n=\ell+1$ for some prime $\ell$ and $G=\PSL_2(\ell)$ or $G=\PGL_2(\ell)$. Write $n=qd$ for $d$ prime to $p$ and a power $q$ of $p$. Proposition~\ref{Gauss-trinomial} states that in this case the Gauss map has inseparable degree $q$ and separable degree $d$. If $d=1$, then $n$ is a power of $p$, which is precisely the case~\ref{n-a} above. Assume $d > 1$. Lemma~\ref{trinomial-inertia} applies in this case with $B=\emptyset$. The group $H_1$ surjects onto $\AGL_1(q)$ and therefore has order divisible by $p$. Take an element $g_1 \in H_1$ of order $p$, so that it acts as a nontrivial product of cycles of length $p$ and fixes $A_1$. Since no element of $\PGL_2(\ell)$ can fix three points,  $\# A_1 =q=2$. The element $g_1$ fixes two points and acts as a products of transpositions. There is a unique such element, namely multiplication by $-1$. In particular $g_1$ acts as a transposition on $A_i$ for $i>1$. Take an element $g_2 \in H$ that acts on $A_1$ as a transposition. If $d \geqslant 4$, then either $g_2$ or $g_1g_2$ fixes at least four points, contradiction. If $d=2$, then $g_1$ acts a transposition and so $G=S_n$. If $d=3$, then $\ell=5$ (and $q=2$). Suppose  $\ell=5$, $q=2$. There is a factorization 
	\[x^6+ax+b = \left(x^3 + c x^2 + c^2\alpha x + \frac{a}{c^2}+\alpha c^3\right) \left(x^3+ cx^2 + (c^2\alpha+ c^2)x +\frac{a}{c^2}+(\alpha+1) c^3 \right),\] where $\alpha \in K$ satisfies $\alpha^2+\alpha + 1 =0$ and $c$ is a root of $y^{10}+ay^5+a^2+b$. So $P(x)$ decomposes into product of two cubic polynomials after a degree $10$ extension. If $A_6 \subset G$, then the minimal degree of the extension over which $P(x)$ splits into a product of two cubic polynomials is \[\frac{\# S_6}{\# (S_3 \times S_3)}= \frac{\# A_6}{\# ((S_3 \times S_3) \bigcap A_6)}=20.\] Therefore $G \not\supset A_6$. The cubic polynomials appearing in the factorization are conjugate to each other over the field $K(a,b)(c)$ via an automorphism that fixes $c$ and sends $\alpha$ to $1 + \alpha$. Therefore the total degree of the extension is at most $10\#S_3 = 60$. Thus $G=\PSL_2(5)$.  
	
	\item $n=12$.  If $p=3$, then $G=M_{11}$, see \cite{Uchida1970}*{Example~3}. Assume $p=2$. In this case either $G$ is one of the Mathieu groups $M_{11}, M_{12}$ or $G$ contains $A_n$. Let $\widehat{G}$ denote the Galois group of $P(x)$ over $\F_2(a,b)$. Then $G$ is a normal subgroup of $\widehat{G}$. Over $\F_2$ the polynomial $x^{12}+x+1$ factorizes into a product of irreducibles of degrees $3,4$ and $5$. Therefore $\widehat{G}$ is a $2$-transitive group containing a $4$-cycle and thus $\widehat{G}=S_{12}$. Since $G$ is a normal subgroup of $\widehat{G}$, the group $G$ contains $A_{12}$.
	
	\item $n=24$. In this case either $G=M_{24}$ or $G \supset A_{24}$. If $p=2$, then we can apply Serre's linearization method \cite{Abhyankar1993}. An almost identical computation, but for the trinomial $x^{24}+x+t$ over the field $K(t)$ is in \cite{Conway-et-al2010} We will find an additive polynomial that is divisible by $x^{24}+ax+b$. Consider the following sequence of equalities in $K(a,b)[x]/(x^{24}+ax+b)$:
	\begin{multline*}
	0=x^{32}+ax^9+bx^8=x^{256}+a^8 x^{72}+b^8 x^{64}\\ = x^{256}+a^8 (ax+b)^3+
	b^8 x^{64} 	=x^{256}+b^8x^{64}+a^{11}x^3+a^{10}bx^2+a^9b^2x+a^8b^3\\
	= x^{2048}+b^{64}x^{512}+a^{88}x^{24}+a^{80}b^8x^{16} + a^{72}b^{16}x^8+a^{64}b^{24}\\
	 = x^{2048}+b^{64}x^{512}+a^{80}b^8x^{16} + a^{72}b^{16}x^8+a^{89}x + a^{88}b+a^{64}b^{24}
	\end{multline*}
The last polynomial in this chain of equalities is an additive polynomial up to a constant. Therefore the group $G$ is a subquotient of $\AGL_{11}(2)$, thus $G$ cannot contain $A_{24}$ since $23 \nmid \# \AGL_{11}(2)$. Hence $G=M_{24}$.

Suppose $p=3$. Let $\widehat{G}$ denote the Galois group of $P(x)$ over $\F_9(a,b)$. Then $\widehat{G}$ contains $G$ as a normal subgroups, and therefore $\widehat{G}$ is also a primitive group containing an $n$-cycle. From Theorem~\ref{Jones} it follows that $\widehat{G}$ is either one of the groups $\PSL_2(23), \PGL_2(23), M_{24}$  or $\widehat{G} \supset A_{24}$. Let $c \in \F_9$ be a root of the polynomial $y^2-y-1$. The polynomial $x^{24}-x+c$ factors over $\F_9$ into a product of irreducible polynomials of degrees $1,2,3,4,6$ and $8$. Therefore $\widehat{G}$ contains an element $g$ with cycle type $1,2,3,4,6,8$. Since $g^8$ fixes at least $15$ points, $\widehat{G} \not\subset \PGL_2(23)$. The group $M_{24}$ has no elements with the cycle type of $g$ (see Appendix~\ref{MathieuCycles}). Therefore $\widehat{G} \supset A_n$. Since $G$ is a normal subgroup of $\widehat{G}$, the group $G$ contains $A_{24}$. 
\end{enumerate}

\item Assume $p|m$. Consider the specialization of $x^n+ax^m+b$ to the polynomial $P(x)=x^n+t$ over $\Klaur{t}$. The Galois group of this polynomial is cyclic and we deduce that $G$ contains an $n$-cycle. Theorem~\ref{Jones} gives a finite list of possibilities for $G$. Consider the specialization to the polynomial $Q(x)=x^n+tx^m+t^2$ over $\Klaur{t}$. The Newton polygon of this polynomials has two slopes $1/(n-m)$ and $1/m$. Since $p|m$ the roots of $Q$ span a wildly ramified extension. Any element of the wild inertia will fix at least $n-m > n/2$ points. No group from the case~\ref{cycle0} of Theorem~\ref{Jones} can have nontrivial elements fixing at least half of the points. 

\item Assume $p|n-m$. Consider the specialization $x^n+t$ over $\Klaur{t}$. Since $n,m$ are relatively prime, the specialized polynomial is separable with cyclic Galois group. Assume $A_n \not\subset G$. Since $G$ contains an $n$-cycle, Theorem~\ref{Jones} produces a list of possibilities for $G$. We analyze each case separately.

\begin{enumerate}
	\item\label{4a} $C_\ell \subset G \subset \AGL_1(\ell)$, $n=\ell\geqslant 5$ is prime. Consider the specialization $x^n+t^{-1}x^m + 1$ over $\Klaur{t}$. The Newton polygon of this trinomial consists of two segments with slopes $m$ and $m-n$. Since $p|(n-m)$ the extension is wildly ramified. Any element from the wild inertia subgroup fixes at least $m$ roots. Therefore $m=1$. In this case Lemma~\ref{trinomial-inertia} applies with $\#B=1$. The stabilizer of $B$ in $G \subset \AGL_1(\ell)$ is abelian and has $\AGL_1(q)$ as a subquotient, thus $q=2$. Since $\ell \geqslant 5$ the group $H_1$ has an element $g \neq id$. The element $g$ fixes at least two points, contradiction.
	
	\item\label{PGL} $\PGL_r(\ell) \subset G \subset \PGaL_r(\ell)$,  $n=\# \PP^{r-1}(\F_\ell)=(\ell^r-1)/(\ell-1)$. We will prove that $\ell$ is necessarily a power of $p$ and that $m=\# \PP^{r'}(\F_{\ell})$ for some $r'<r$.

	  We can assume that $\PGL_r(\ell) \not\supset A_n$. We call a point blue if it is in the set $B$ and azure if it is in one of the $A_i$'s. Lemma~\ref{trinomial-inertia} implies that for every pair of points $P, Q \in A_i$ there is an element of $G$ that fixes all blue points and moves $P$ to $Q$. Our goal is to show that $p | \ell$ and $B$ forms a projective subspace of $\Omega$. To this end we examine the Galois action on lines that contain blue points. 
	 	
		Consider a line $l$ in $\PP^r(\F_\ell)$ that contains at least $3$ blue points; identify this line with $\PP^1(\F_\ell)$. Any element of $G$ that fixes all blue points is a power of the Frobenius when restricted to $l$.  Since any azure point can be moved by an element of $G$ that fixes all blue points, the set of blue points of $l$ is the set $l(\F_{t})$ for some subfield $\F_t \subset \F_\ell$. There is a subgroup of $H$ that fixes all blue points and acts transitively on $A_i$ for every $i$. Thus if $l$ contains a point from $A_i$, then it contains the whole set $A_i$. Assume $A_i, A_j \subset l$ for some $i \neq j$. The group $H_i$ fixes all blue points and $A_i$, and acts transitively on the rest of the line. Therefore $A_i$ equals $l(\F_{t^k}) \setminus l(\F_t)$, where $k=\log_t(q+t)$. Similarly $A_j$ equals $l(\F_{t^k}) \setminus l(\F_t)$, contradiction. Thus $l$ contains points only from one group $A_i$. Since the subgroup of $G$ that fixes $B$ acts transitively on $A_i$, the extension $\F_\ell/\F_t$ is such that the Frobenius acts with only one orbit. There is only one such extension $\F_4/\F_2$. So either $\ell=4, q=2$ or every line containing at least three blue points consists entirely of blue points. Suppose $\ell = 4, q=2$. If $r=2$, then $\PGL_2(4)=A_5$, contradiction. Therefore $r>2$. Suppose $m > 3$. Take a blue  point $Q$ not on $l$ and let $P\in l$ be an azure point $P \in A_i$. Then $A_i \subset l$. Consider the line connecting $Q$ and $P$. If this line contains a blue point $R \neq Q$, then an element of $H$ that induces a transposition on $A_i$ does not preserve the collinearity of $P,Q$ and $R$. If this line contains an azure point $R \in A_j$, then an element of $H_j$ that induces a transposition on $A_i$ does not preserve the collinearity of $P,Q$ and $R$, contradiction. Suppose $m=3, \ell=4, q=2, r>2$. Consider blue points $P, Q\in l$ and an azure point $R_1 \in A_i$, $R_1 \not\in l$ (as in Figure~\ref{threebluedots}). Let $R_2$ denote the point of $A_i \setminus \{R_1\}$. The  line $l'$ connecting $P$ and $R_1$ does not contain any blue points except $P$; therefore there is a point $S \in l'$, $S \in A_j$ for some $j \neq i$. Let $g \in H_j$ be an element that maps $R_1$ to $R_2$. Since $g$ fixes $S$ and $P$, $g$ preserves $l'$ and therefore $R_2 \in l'$ and $P,R_1, R_2$ are collinear. Similarly $Q, R_1$ and $ R_2$ are collinear, contradiction. Therefore \slantsf{every line containing at least three blue points consists entirely of blue points}. 
		\begin{figure}[h]
		\begin{minipage}{.5\textwidth}
			\centering
			\includegraphics[scale=0.8]{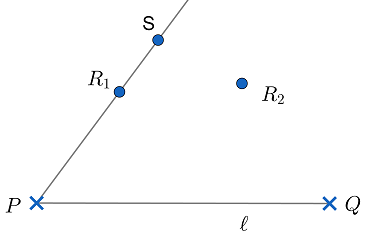}
			\caption{}
			\label{threebluedots}
		\end{minipage}%
		\begin{minipage}{.5\textwidth}
			\centering
			\includegraphics[scale=0.9]{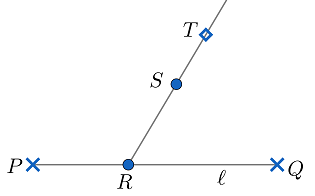}
			\caption{}
			\label{twobluedots}
		\end{minipage}
		
		\end{figure}

		 Consider a line $l$ containing exactly two blue points $P$ and $Q$ (if it exists). Consider a point $R \in A_i \cap l$. Since the pointwise stabilizer of $B$ in $G$ acts transitively on $A_i$, the whole set $A_i$ lies on $l$. We conclude that $q|\ell-1$. Suppose $m>2$, so there is a blue point $S \not\in l$ (as in Figure~\ref{twobluedots}). Let $l'$ denote the line connecting $S$ and $R$. Since $A_i \subset l$, there is a point $T \in (A_j \cup B) \cap l'$ for some $j \neq i$. Take an element $g \in H_j$ that moves $R$ to a different point on $l$. Then $g$ fixes $S$ and $T$, and therefore fixes $l'$ but moves $R$ to a point not on $l'$, contradiction. If $m=2$ and $r>2$, there exists a line containing exactly one blue point.
		 Therefore, assuming that $l$ exists, $q| \ell - 1$ and either $r=2$ or there exists a line containing exactly one blue point.
	 
		 Assume that there exists a line $l$ that passes through exactly one blue point. If all azure points on this line are from the same group $A_i$, take two azure points $P,Q$ on the line and consider an element of $G$ that maps $P$ to $Q$ and induces a translation on $\AGL_1(\F_q)$. This element preserves $l$ and decomposes all azure points into cycles of length $p$, hence $p|\ell$. If there is a point $P \in l$, $P \in A_i$ and a point $Q \in l \cap A_j, j \neq i$, consider elements of $H_i$ that fix $P$ and maps $Q$ to a different point of $A_j$. Such elements preserve $l$. We deduce that for every $j=1,...,d$ if there is one point from $A_j$ on the line, the whole set $A_j$ is contained in the line. In this case we again get that $q | \ell$. 
		 
		We have proved the following statements.
		\begin{itemize}
			\item Every line that contains at least three blue point is entirely blue.
			\item If a line containing exactly two blue points exists, then $q|\ell-1$ and either $r=m=2$ or a line containing exactly one blue point exists.
			\item If a line containing exactly one blue point exists, then $p|\ell$. 
		\end{itemize}
		 Assume that it is not the case that $r=m=2$. Then if a line containing two blue points exists, then $q | \ell -1$ and a different line passes through exactly one blue point, which in turn implies $p| \ell$, contradiction. Therefore there are no lines that contain exactly two blue points. So every line that contains two blue points, contains at least three, thus is entirely blue. Take a line through a blue and an azure point. It must have exactly one blue point. Therefore $p|\ell$ and the blue points form a projective subspace.
		 
		 Now we show that the case $r=m=2$ is impossible. Suppose $r=m=2$.  Then $n=\ell+1$ must be odd, therefore $\ell=2^k$. Since $n=qd+2$, both $q$ and $d$ are odd. Suppose $d=1$. If $q \neq 3$, then $h^2$ has exactly three fixed points, so $h^2$ is the degree $2$ Frobenius. A nontrivial orbit of the degree $2$ Frobenius on $\Aff^1(\F_{2^k})$ has size at most $k$. Since $h^2$ has two nontrivial orbits and two fixed points $2+2k \geqslant 2^k$. We conclude that $k \leqslant 3$. Since the Frobenius must have two nontrivial orbits, $k$ is equal to $3$. Then $q=6$ which is absurd. If $d=1$ and $q=3$, then $G \subset \PGL_2(4) = A_5$, contradiction. Thus we can assume that $d\neq 1$ and, since $d$ is odd, $d \geqslant 3$.  In this case the element $h$ has exactly $d$ fixed points, thus $h$ is a power of the Frobenius, so $d-1$ is a power of $2$. Suppose $q \neq 3$. Since $h$ has a unique orbit of length $2$, $h$ is the degree $2$ Frobenius and the number of fixed points of $h$ is $3=\#\PP^1(\F_2)=d$. The group $H_1$ fixes $q+2 >3$ points, therefore $H_1$ is generated by a power of the Frobenius and $q+1$, which is the number of fixed points of $H_1$ on $\Aff^1$, is a power of $2$. Therefore $q \equiv -1 \pmod 4$; hence $2^k=\ell=qd+1=3q+1 \equiv 2 \pmod 4$. Thus $\ell=2, n=3$. But if $\ell=2$, then $G \supset \PGL_2(2)=S_3$, contradiction. Finally we have to consider the case $q=3$. Write $d=2^{g}+1$, then $3(2^g+1)=qd=n-2=2^k-1$. The last equation can be satisfied modulo $8$ only if $d=5, n=17$. The group $H_1$ has $q+m=5$ fixed points, and is therefore generated by the degree $4$ Frobenius. The degree $4$ Frobenius on $\PP^1(\F_{16})$ acts with orbits of size $1$ and $2$, so it cannot act transitively on $A_2$. Contradiction.

		Thus we proved that $\ell$ is a power of $p$, $n=1+\ell + ... + \ell^{r-1}$ and $m=1+ \ell + ... + \ell^{s-1}$.
		
	\item	Assume $n=1+\ell + ... + \ell^{r-1}$ and  $m=1+ \ell + ... + \ell^{s-1}$, for some power $\ell$ of the characteristic. We claim that for such $n,m$ the group $G$ is $\PGL_r(\ell)$. Consider the equation $x^n+ax^m+b$. Let $y=x^{\ell-1}$. The equation becomes $y^{\ell^r-1}+ay^{\ell^s-1}+b$. Multiplying by $y$ gives $y^{\ell^r} + ay^{\ell^s}+by$. The roots of the latter form an $\F_\ell$ vector space, so its Galois group is a subgroup of $\GL_r(\ell)$. Since the roots of $x^n+ax^m+b$ correspond to lines in the space of roots of $y^{\ell^r} + ay^{\ell^s}+by$, $G$ is contained in $\PGL_r(\ell)$. Since $G$ contains an $n$-cycle and $n \neq 11, 23$, Theorem~\ref{Jones} implies that either $\PGL_s(t) \subset G \subset \PGaL_s(t)$ for some prime power $t$ or $n$ is prime and $C_n \subset G \subset \AGL_1(n)$. From the case~\ref{4a} of this proof we deduce that $\PGL_s(t) \subset G \subset \PGaL_s(t)$ . From the case~\ref{PGL} we deduce that $t$ is the largest power of the characteristic that divides $n-1$; hence $t=\ell$. Therefore $G=\PGL_d(\ell)$.

	\item $n=11$ and $G=M_{11}$ or $G=\PSL_2(11)$. In this case $m$ can be any number from $1$ to $5$. Assume that $\widehat{G}$ is the Galois group of $P$ over some $F(a,b)$ for some finite field $F$. The group $G$ is a normal subgroup of $\widehat{G}$. Since $\widehat{G}$ contains an $n$-cycle, it is equal to one of $M_{11}, \PSL_2(11), A_{11}, S_{11}$.  Hence to show, that $A_{11} \subset G$ it suffices to show that $\widehat{G}\neq M_{11}, \PSL_2(11)$. To show that $\widehat{G} \neq M_{11}, \PSL_2(11)$ it is enough to find s specialization of $P(x)$ over a finite field, whose factorization pattern contradicts the possible cycle types of $M_{11}$ and $\PSL_2(11)$.  We do so in Table~\ref{factortable} of Appendix~\ref{Factorizations} (in every case the factorization shows that $\widehat{G}$ has a cycle of length $2,3$ or $5$ contradicting Theorem~\ref{Jones}).

	 The only case remaining is that of the polynomial $x^{11}+ax^2+b$ when $p=3$. But in this case Uchida \cite{Uchida1970}*{Example~4} proved that $G=M_{11}$.

	\item $n=23$ and $G=M_{23}$. Similarly to the previous case, for all but one possible pair $(p,m)$ we can find a trinomial over a finite field with factorization pattern  that is impossible for $G=M_{23}$. The results are summarized in Table~\ref{factortable} of Appendix~\ref{Factorizations}. The only case not covered in Table~\ref{factortable} is that of the polynomial $x^{23}+ax^3+b$ for $p=2$. The polynomial $x^{23}+tx^3+1$ over $\F_2(t)$ has Galois group $M_{23}$, as proved by Abhyankar using the linearization method \cite{Abhyankar1993}. Consider the field $L=K(a,b)(b^{1/23})$. Let $\alpha\colonequals b^{1/23}$ and $\beta\colonequals a\alpha^{-20}$. Then $L$ is isomorphic to the field of rational functions $K(\alpha, \beta)$. Over $L$ the equation $x^{23}+ax^3+b=0$ can be simplified. Let $x=\alpha y$, then the equation becomes $y^{24}+\beta y^3 + 1=0$. Since $K(\alpha, \beta)/K(\alpha)$ is a purely transcendental extension, the Galois group of $y^{23}+\beta y^3+1$ over $K(\alpha, \beta)$ is equal to the Galois group of the same equation over $K(\beta)$, which is equal to $M_{23}$. Since $L/K(a,b)$ is a cyclic extension, $G \not\supset A_{23}$, thus $G=M_{23}$.

\end{enumerate} 
\end{enumerate}

\end{proof}

\begin{remark}
	Theorem~\ref{final-trinomial} does not distinguish between the cases $G=A_n$ and $G=S_n$, but it is possible to do so. When $p$ is odd this is discussed in Remark~\ref{RathmannAn}. If $p=2$ and $n$ is even, then $G\subset A_n$ by \cite{Abhyankar-Oun-Avinash1994}*{Proposition~2.24}. If $p=2$ and $n$ is odd, then $G\subset A_n$ if and only if $m=2$ by \cite{Abhyankar-Oun-Avinash1994}*{Proposition~2.23}.
\end{remark}

\section*{Acknowledgements} 

I would like to thank my advisor Bjorn Poonen for careful reading of the paper and insightful conversations. I am grateful to Gareth Jones and an anonymous referee for pointing out an omission in the list of triply transitive groups in an earlier version of the paper. I thank Dmitri Kubrak for many helpful suggestions. 
\appendix

\section{Cycle types of Mathieu groups}\label{MathieuCycles}
The following table describes cycle types of Mathieu groups in their standard multiply transitive actions; see \cite{ATLASv3}.
	\begin{table}[h]
	\begin{center}
		\begin{tabularx}{\textwidth}{l l Y} \toprule
			Group & Number of points & Cycle types  \\ \midrule
			$M_{11}$ & $11$ & $2^4$, $3^3$, $4^2$, $5^2$, $(2,3,6)$, $(2,8)$, $11$ \\ \addlinespace
			$M_{11}$ & $12$ & $2^4$, $3^3$, $(2^2, 4^2)$, $5^2$, $(2,3,6)$, $(4,8)$, $11$ \\	\addlinespace
			$M_{12}$ & $12$ &
			$2^6$, $2^4$, $3^3$, $3^4$, $(2^2, 4^2)$,
			 $4^2$, $5^2$, $6^2$, 			  
			$(2,3,6)$, $(4,8)$, $(2,8)$, $(2,10)$, $11$ \\
			\addlinespace
			$M_{22}$ & $22$ & $2^8$, $3^6$,  $(2^2, 4^4)$, $5^4$, $(2^2, 3^2, 6^2)$, $7^3$,
			$(2,4,8^2)$, $11^2$ \\
				\addlinespace
			$\Aut(M_{22})$ & $22$ & $2^7$, $2^8$, $2^{11}$, $(2, 4^4)$, $(2^3, 4^4)$, $3^6$,  $(2^2, 4^4)$, $5^4$, $(2, 3^2, 6^2)$, $(2^2, 3^2, 6^2)$,  $7^3$,
			$(2,4,8^2)$, $(4, 8^2)$, $(2, 10^2)$, $11^2$, $(4, 6, 12)$, $(7, 14)$ \\
			\addlinespace
			$M_{23}$ & $23$ & $2^8$, $3^6$, $(2^2, 4^4)$, $5^4$, $(2^2, 3^2, 6^2)$, $7^3$, 			
			$(2,4,8^2)$, $11^2$, $(2,7,14)$, $(3,5,15)$, $23$ \\\addlinespace
			$M_{24}$ & $24$ & $2^8$, $2^{12}$, $3^6$, $3^8$, $(2^4, 4^4)$, $(2^2, 4^4)$, $4^6$, $5^4$, $(2^2, 3^2, 6^2)$, $6^4$, $7^3$,
			$(2,4,8^2)$, $2^2, 10^2$, $11^2$, $(2,4,6,12)$, $12^2$, $(2,7,14)$, $(3,5,15)$, $(2,21)$, $23$ \\

			\bottomrule
			
		\end{tabularx}\caption{Cycle types of Mathieu groups}\label{MathCycl}
	\end{center}
\end{table}
\clearpage
\section{Factors of trinomials over finite fields}\label{Factorizations}
	\begin{table}[h]
	\begin{center}
		\begin{tabularx}{275pt}{l l l} \toprule
			Polynomial & Field & Degrees of irreducible factors  \\ \midrule
			$x^{11}+x+1$ & $\F_2$ & $2 \times 9$ \\ 
			$x^{11}-x-1$ & $\F_5$ & $1 \times 3 \times 7$ \\	
			$x^{11}+x^3+1$ & $\F_2$ & $5 \times 6$\\	
			$x^{11}+x^{4}+1$ & $\F_7$ & $1\times1\times2\times7$ \\
			$x^{11}+x^{5}+1$ & $\F_3$ & $1\times3\times7$ \\
			$x^{11}+x^{5}+1$ & $\F_2$ & $3\times8$ \\
			\addlinespace
			$x^{23}+x+1$ & $\F_2$ & $2 \times 8\times 13$  \\ 
			$x^{23}+x+1$ & $\F_{11}$ & $1 \times 2 \times 5 \times 15$  \\	
			$x^{23}+x^2+1$ & $\F_3$ & $1 \times 2 \times 20$  \\	
			$x^{23}+x^{2}+1$ & $\F_7$ & $7\times 16$  \\
			$x^{23}+x^{3}+1$ & $\F_5$ & $1\times 22$  \\
			$x^{23}+x^{4}+1$ & $\F_{19}$ & $1\times 1 \times 1 \times 4 \times 7 \times 9$  \\
			$x^{23}+x^{5}+1$ & $\F_3$ & $1 \times 2\times 5 \times 7 \times 8$  \\			
			$x^{23}+cx^{5}+1$ & $\F_4$ & $1 \times 9 \times 13$  \\
			$x^{23}+x^{6}+1$ & $\F_{17}$ & $2\times 3 \times 9 \times 9$  \\
			$x^{23}+x^{7}+1$ & $\F_2$ & $2\times 10 \times 11$ \\
			$x^{23}+x^{8}+1$ & $\F_3$ & $1\times 3 \times 19$  \\
			$x^{23}+x^{8}+1$ & $\F_5$ & $1\times 4 \times 5 \times 13$ \\
			$x^{23}+x^{9}+c$ & $\F_4$ & $1\times 2 \times 20$ \\
			$x^{23}+x^{9}+1$ & $\F_7$ & $4\times 19$  \\
			$x^{23}+x^{10}+1$ & $\F_{13}$ & $1\times 1 \times 4 \times 6 \times 11$  \\
			$x^{23}+x^{11}+1$ & $\F_2$ & $5\times 6 \times 12$  \\
			$x^{23}+x^{11}+1$ & $\F_3$ & $1\times 3 \times 19$ \\

			\bottomrule
			
		\end{tabularx}\caption{Factorization of trinomials; $c^2=c+1$}\label{factortable}
	\end{center}
	
\end{table}
\clearpage
\begin{bibdiv}
\begin{biblist}

\bibselect{big}

\end{biblist}
\end{bibdiv}

\end{document}